\newtheorem{lemma}{Lemma}[section]
\newtheorem{definition}{Definition}[section]
\newtheorem{theorem}{Theorem}[section]
\newtheorem{condition}{Condition}[section]
\def\blue{\color{blue}}
\def\proclaim#1{\par \bigskip\noindent {\bf #1}\bgroup\it\ }
\def\endproclaim{\egroup\par\bigskip}
\def\proof{\par\noindent{\bf Proof.} \;}
\def\proofthm3{\par\noindent{\bf Proof of Theorem 3.1.} \;}
\newbox\TempBox \newbox\TempBoxA
\def\ep{\mathbb{E}}
\def\var{\mathbb{V}ar}
\def\R{\mathbb{R}}
\def\I{\mathbb{I}}
\def\X{\mathbf{X}}
\def\x{\mathbf{x}}
\def\z{\mathbf{z}}
\def\P{\mathbb{P}}
\def\w{\mathbf{w}}
\def\W{\mathbf{W}}
\def\c{\mathbf{c}}
\def\bbeta{\boldsymbol{\beta}}
\def\bdelta{\boldsymbol{\delta}}
\def\bomega{\boldsymbol{\omega}}
\def\btheta{\boldsymbol{\theta}}
\def\bpi{\boldsymbol{\pi}}
\def\bW{\boldsymbol{W}}
\def\bPi{\boldsymbol{\Pi}}
\def\bxi{\boldsymbol{\xi}}
\def\bgamma{\boldsymbol{\gamma}}
\def\obeta{\hat{\bbeta}^*}
\def\oxi{\hat{\bxi}^*}
\def\sumn{\sum_{i=1}^{n}}
\newcommand{\bq}{\begin{equation}}
\newcommand{\eq}{\end{equation}}
\newcommand{\bqs}{\begin{equation*}}
\newcommand{\eqs}{\end{equation*}}
\newcommand{\be}{\begin{eqnarray}}
\newcommand{\ee}{\end{eqnarray}}
\newcommand{\by}{\begin{eqnarray*}}
\newcommand{\ey}{\end{eqnarray*}}
\newcommand{\bn}{\begin{enumerate}}
\newcommand{\en}{\end{enumerate}}
\newcommand{\bi}{\begin{itemize}}
\newcommand{\ei}{\end{itemize}}
\newcommand{\bds}{\begin{description}}
\newcommand{\eds}{\end{description}}
\newcommand{\bcen}{\begin{center}}
\newcommand{\ecen}{\end{center}}
\journal{}
\begin{document}

\begin{frontmatter}


\title{ Semiparametric Expectile Regression for High-dimensional Heavy-tailed and Heterogeneous Data
	\tnoteref{label_title}}
\tnotetext[label_title]{This research is partly supported by the Fundamental Research Funds for the Central Universities, Major Project of the National Social Science Foundation of China (No.13$\&$ZD163) , Zhejiang Provincial Natural Science Foundation (No: LY18A010005) and the Research Project of Humanities and Social Science of Ministry of Education of China(No. 17YJA910003). Jun Zhao and Guan'ao Yan are co-first authors.}

\author[Label1]{Jun Zhao}

\author[Label2]{Guan'ao Yan}

\author{Yi Zhang\corref{cor1}\fnref{Label2}}
\address[Label1]{Zhejiang University City College}
\address[Label2]{School of Mathematical Sciences, Zhejiang University}
\cortext[cor1]{ Address for correspondence: Yi Zhang, School of Mathematical Sciences, Zhejiang University, 38 Zheda Road, Hangzhou, 310027, Zhejiang, China.}
\ead{zhangyi63@zju.edu.cn}

\begin{abstract}
Recently, high-dimensional heterogeneous data have attracted a lot of attention and discussion. Under heterogeneity, semiparametric regression is a popular choice to model data in statistics. In this paper, we take advantages of expectile regression in computation and analysis of heterogeneity,
 and propose the regularized partially linear additive expectile regression with nonconvex penalty, for example, SCAD or MCP for such high-dimensional heterogeneous data. We focus on a more realistic scenario: the regression error is heavy-tailed distributed and only has finite moments, which is  violated with the classical sub-gaussian distribution assumption and more common in practise.
Under some regular conditions, we show that with probability tending to one, the oracle estimator is one of the local minima of our optimization problem. The theoretical study indicates that the dimension cardinality of linear covariates our procedure can handle with is essentially restricted by the moment condition of the regression error.
For computation, since the corresponding optimization problem is nonconvex and nonsmooth, we derive a two-step algorithm to solve this problem.
Finally, we demonstrate that the proposed method enjoys good performances in estimation accuracy and model selection through Monto Carlo simulation studies and a real data example.  What's more, by taking different expectile weights  $\alpha$, we are able to detect heterogeneity and explore the entire conditional distribution of the response variable, which indicates the usefulness of our proposed method for analyzing high dimensional heterogeneous data.

\end{abstract}

\begin{keyword}
Expectile Regression \sep Nonconvex Optimization \sep Heterogeneity \sep Heavy Tail \sep Partially Linear Addtive Model



\end{keyword}

\end{frontmatter}


\section{Introduction}
\label{intro}
\setcounter{definition}{0}\setcounter{definition}{0}
\setcounter{equation}{0}\setcounter{lemma}{0}
\setcounter{proposition}{0}\setcounter{theorem}{0}
\setcounter{remark}{0}\setcounter{corollary}{0}

High dimensional data have been increasingly frequent in nowadays scientific areas like genomics, economics, finance and so on. The past two decades have witnessed the rapid development of high dimensional statistical analysis,  most of which usually assume homogeneity on the data structure, for example, assuming a sequence of i.i.d. errors in regression framework(see, \cite{BaV11}). Recently, however, \cite{NRC13} points out that due to multi-sources data collection technology and error accumulation in data preprocessing, high dimensional data display an opposite feature "heterogeneity". There has been much evidence for the existence of heterogeneity in high dimensional data, for example, \cite{Da12} identified the presence of heteroscedasticity in the eQTLs data which are usually associated with gene expression variations and demonstrated directly the necessity of considering heteroscedasticity in the modeling; \cite{WWL12} used regularized quantile regression to investigated genetic variation relevant to human eye disease and also detected heterogeneity in this gene data; other references, see \cite{GaZ16},~\cite{ZCZ18}.

 \cite{Buja14} pointed out that  when some covariates indeed cast nonlinear effects on the response, a blind use of linear approximations may lead to misunderstanding in data mining. From this point of view, it needs to incorporates the nonparametric effect in the modelling process in some circumstances. Semiparametric regression absorbs the simplicity  of a linear form and the flexibility of nonparametric models so that it has been widely used to model heterogeneous data in statistics and econometrics, see, \cite{RaS96} and \cite{Horo99}.  Meanwhile, when building up the model, additivity is often assumed on the nonparametric part to circumvent the curse of dimensionality (\cite{HaT90},~\cite{Gyo06}). Thus, a partially linear additive model is often applied as an intermediate strategy to make the analysis more reliable and flexible.
Under this framework, \cite{SaW16} proposed the regularized partially linear additive quantile regression to analyze high-dimensional heterogeneous data. However, the non-differential characteristic of the quantile loss function may impose restriction on the computation efficiency in high dimension. Besides, quantile regressions at different levels are needed to account for heterogeneity. So crossing problems may happen and cause misunderstanding when B-spline approximation is introduced in the nonparametric part of semiparametric regression. 
These observations motivate us to use expectile regression proposed by  \cite{AAP76} and \cite{New87} to deal with heterogeneity.
 Expectile regression uses the asymmetric squares loss function $\phi_{\alpha}(\cdot)$ defined below,
\be\label{phi}
\phi_{\alpha}(r)=|\alpha-\I(r<0)|r^2=
\begin{cases}
\alpha r^2, &r \geq 0, \\
(1-\alpha)r^2, &r<0.
\end{cases}
\ee
And the $\alpha$-th expectile of random variable $y$ is denoted by
\bqs m_{\alpha}(y)=\underset{m\in\R}{\arg\min}~ \ep\phi_{\alpha}(y-m).\eqs
Note that 1/2-th expectile is exactly the mean.
Expectile regression shares good properties. Firstly, its differentiable loss function can alleviate the computation burden and further leads to a more amenable theoretical process under the high dimensional setting. Furthermore, assigning different weights onto squared error loss according to the positive and negative error respectively, we can infer a complete interpretation of the conditional relationship between covariates and response variable, see  \cite{New87}. What's more, \cite{Wal15} conclude from their simulation results that expectile regression seems less vulnerable for crossing problems than quantile regression and brings some kind of robustness in nonparametric approximation.
Due to these good properties, expectile regression has great potential to analyze heterogeneity in semiparametric framework.

Meanwhile, most of the existing studies in the literature assume that regression errors follow the
gaussian or sub-gaussian distribution. For example, under this assumption,
\cite{GaZ16} proposed the linear expectile regression model to analyze heteroscedasticity in high dimension. However, this is a relatively stringent  assumption and more and more evidence is against it, especially in genetics and finance, where regression errors do not have the tail of exponential decreasing rate (\cite{FLW17}) or even worse is heavy-tailed with only finite moments (\cite{ZCZ18}).

In this article, we develop the methodology and theory for the partially linear additive expectile regression with general nonconvex penalty function for  high-dimensional heterogeneous data. As suggested in \cite{Sch07}, the linear part of our model incorporates most variables of the data set and its dimensionality $p=p(n)$ can be much larger than the sample size $n$. As for the nonparametric part, it contains some explaining variables, like some clinical or environmental variables used for describing their possible nonlinear effects and its dimensionality $d$ is fixed. To account for heterogeneity, we adopt the so-called variance heterogeneity in \cite{RaS96} and let regression errors allow for either nonconstant variances or other forms of covariate effects. More importantly, in our framework, unlike the classical gaussian or sub-gaussian errors distribution,  we make a less stringent and more realistic assumption that regression errors only have finite moments. Theoretically, we derive the asymptotic oracle properties for the optimal estimator of our proposed regularized expectile regression framework and figure out how heavy-tailed moment condition affects the dimensionality of covariates our model can handle. For computation, since the corresponding optimization problem is nonconvex and nonsmooth, we make full use of the structure of our induced optimization  problem and seperate it into the penalized linear part and the nonparametric part, and propose a two step algorithm. In each step, we take full advantage of expectile regression with its differentiability to reduce computation burden.

This article is organized as follows. In Section 2, we introduce our penalized partially linear additive expectile regression with nonconvex penalty functions like SCAD or MCP penalty and provide an efficient algorithm for the optimization problem. In Section 3, we propose the oracle estimator as a benchmark and  build up its relation with our induced optimization problem, i.e., the so-called oracle property.
In Section 4, we carry out the Monte Carlo simulation to investigate performances of the proposed method under the heteroscedasticity setting. In Section 5, we apply our model onto a genetic microarrays data set to analyze the potential inducement for low infant birth weights. The proofs of theoretical results and lemmas needed are included in the Appendix.



\section{Methodology}
\label{method}
\setcounter{definition}{0}\setcounter{definition}{0}
\setcounter{equation}{0}\setcounter{lemma}{0}
\setcounter{proposition}{0}\setcounter{theorem}{0}
\setcounter{remark}{0}\setcounter{corollary}{0}
\subsection{Partially linear additive expectile regression}
Semiparametric regression incorporates most variables of the data set into the linear part, and contains some explaining variables like some clinical or environmental variables in the nonparametric part  to describe possible nonlinear effects casted on responses.
Let us begin with the notation and statistical setup. Suppose that we have a high-dimensional data sample $\{Y_i,\x_i,\z_i\}$,~$i=1,\ldots,n$, where $\x_i=(x_{i1},\ldots,x_{ip}),~i=1,\ldots,n$ are independent and identically distributed $p$-dimensional covariates along with the common mean 0 and $\z_i=(z_{i1},\ldots,z_{id}),~i=1,\ldots,n$ are $d$-dimensional covariates. Consider the data are from the following partially linear additive model,
\bq\label{high-dimensional partially linear model}
Y_i=\mu_0+\sum_{k=1}^p\beta_k^*x_{ik}+\sum_{j=1}^dg_{0j}(z_{ij})+\epsilon_i=\x_i'\boldsymbol{\beta}^*+g_0(\z_i)+\epsilon_i,
\eq
where $g_0(\z_i)=\mu_0+\sum_{j=1}^dg_{0j}(z_{ij})$.

In consideration of data heterogeneity, here we allow for the so-called "variance heterogeneity" from \cite{RaS96} in their "mean and dispersion additive model". For example,  $\epsilon_i$ can take the following form, $$\epsilon_i=\sigma(\x_i,\z_i)\eta_i,$$ where $\sigma(\x_i,\z_i)$, conditional on $\x$ and $\z$, can be nonconstant, linear form (\cite{Ait87},~\cite{GaZ16}), nonparametric form (\cite{RaS96}).

In addition to heterogeneity, $\{\epsilon_i\}_{i=1}^n$ are  assumed to be mutually independent and satisfy $m_{\alpha}(\epsilon_i|\x_i,\z_i)=0$ for some specific $\alpha$. Thus, by $m_{\alpha}(\epsilon_i|\x_i,\z_i)=0$, the $\alpha$th conditional expectile of $Y_i$ given covariates $\x_i,\z_i$ through the model (\ref{high-dimensional partially linear model}) is
\bqs
m_{\alpha}(Y_i|\x_i,\z_i)=\x_i'\boldsymbol{\beta}^*+g_0(\z_i).
\eqs
So $\bbeta^*$ and $g_0(\cdot)$ actually minimize the conditional $\ep[\phi_{\alpha}(Y_i-\x'\bbeta-g(\z))]$,\footnote{For notation simplicity, when no confusion arises, we still denote conditional expectation by $\ep[\cdot|\x,\z] := \ep[\cdot]$. }
\be
(\bbeta^*,g_0(\z))=\underset{\bbeta\in\R^p,g\in \mathcal{G}}{\arg\min} \ep[\phi_{\alpha}(Y_i-\x'\bbeta-g(\z))].
\ee
 For identification purpose, without loss of generality, each $g_{0j}$ is assumed to has zero mean, i.e., $g_0(\z)\in \mathcal{G}$, where
\bqs
\mathcal{G}=\{g(\z):g(z)=\mu+\sum_{j=1}^d g_j(z_j), \ep[g_j(z_j)]=0,~j=1,\ldots,d\},
\eqs
so that the minimizer $(\bbeta^*,g_0(\z))$  of the population risk is unique for convenience.

In this article, $d$, the dimensionality of the nonparametric covariates $\z$,  is fixed.
The nonparametric components $g_{0j}(\cdot), j=1,\ldots,d$ in this model are approximated by a linear combination of B-spline basis functions. Before that, we first give out the following definition:
\begin{definition}
Let $r\equiv m+\nu$, where $m$ is a positive integer and $\nu\in (0,1]$. Define $\mathcal{H}_r$ as the collection of functions $h(\cdot)$ on $[0,1]$ whose $m$th derivative $h^{(m)}(\cdot)$ satisfies the H\"{o}lder condition of order $\nu$. That is, for any $h(\cdot)\in \mathcal{H}_r$, there exists some positive constant $C$ such that
\be
|h^{(m)}(z')-h^{(m)}(z)|\leq C|z'-z|,~~\forall ~~0\leq z', z\leq 1.
\ee
\end{definition}

Assume the nonparametric component $g_{0j}(\cdot) \in \mathcal{H}_r$ for some $r\geq 1.5$. Let $\bpi(t)=(b_1(t),\ldots,b_{k_n+l+1}(t))'$ denote a vector of normalized B-spline basis functions of order $l+1$ with $k_n$ quasi-uniform internal knots on $[0,1]$. Then $g_{0j}(\cdot)$ can be approximated using a linear combination of B-spline basis functions in the $\bPi(\z_i)=(1,\bpi(z_{i1})',\ldots,\bpi(z_{id})')'$. Then there exists $\bxi_0=(\xi_{00},\bxi_{01},\ldots,\bxi_{0d})\in \R^{D_n}$, where $D_n=d(k_n+l+1)+1$, such that $\sup_{\z_i}|\bPi(\z_i)'\bxi_0-g_0(\z_i)|=O(k_n^{-r})$; see \cite{Sto85},~ \cite{Sch07}. For ease of notation and simplicity of proofs, we use the same number of basis functions for all nonlinear components in model (\ref{high-dimensional partially linear model}), but in practice, such restrictions are not necessary.

The dimensionality of $\x$, $p=p(n)$, follows high-dimensional setting, and is much larger than $n$. One leading way to deal with this high dimension setting is to assume that the true parameter $\boldsymbol{\beta}^{*}=(\beta_1^{*},\ldots,\beta_p^{*})$ is  sparse. Let $A=\{j: \beta_j^{*}\neq0,1\leq j\leq p\}$ be the active index set and its cardinality $q=q(n)=|{A}|$. Sparsity means that $q<n$ and all the left $(p-q)$ coefficients are exactly zero. Generally, due to heterogeneity,  $\boldsymbol{\beta}^{*}$, $A$ and $|{A}|$ can change for different expectile level $\alpha$ and for simplicity in notation, we omit such dependence when no confusion arises. Without loss of generality, we rewrite $\boldsymbol{\beta}^*=((\boldsymbol{\beta}_A^*)',\mathbf{0}')'$ where $\boldsymbol{\beta}_A^*\in \R^q$ and $\mathbf{0}$ denotes a $(p-q)$ dimensional vector of zero. Let $\X=(\x_1,\ldots,\x_n)'$ be the $n\times p$ matrix of covariates. Denote $\X_j$ the $j$th column of $\X$ and define $\X_A$ the submatrix of $\X$ that consists of its first $q$ columns and denote by $\X_{A_i}$ the ith row of $\X_A$.

Under sparsity assumption,
regularized framework has been playing a leading role in analyzing high-dimensional data in the past two decades.
There are different lines of choices for the penalty function $P_{\lambda}(t)$ . The $L_1$ penalty, the well-known Lasso (\cite{Tib96}), is a popular choice for penalized estimation since it induces a convex optimization problem such that it brings convenience in theoretical analysis and computation. However, the $L_1$ penalty is known to over-penalize large coefficients, tends to be biased and requires strong irrepresentable conditions on the design matrix to achieve selection consistency. This is usually not a concern for prediction, but can be undesirable if the goal is to identify the underlying model. In comparison, an appropriate nonconvex penalty function can effectively overcome this problem; see \cite{Fan01}. So throughout this paper, we assume that the regularizer $P_{\lambda}(t)$ is a general folded concave penalty function, for examples, the SCAD or MCP penalty;
	
\bi
\item \textbf{SCAD, \cite{Fan01}}. ~The SCAD penalty is defined through its first order derivative and symmetry around the origin. To be specific, for $\theta > 0$,
\be
P_{\lambda}'(\theta)=\lambda\{\I(\theta\leq\lambda)+\frac{(a\lambda-\theta)_+}{(a-1)\lambda}\I(\theta>\lambda)\},
\ee
where $a> 2$ is a fixed parameter. By straight calculation, for $\theta > 0$,
 \be
 P_{\lambda}(\theta)&=&\lambda\theta\I(\theta\leq\lambda)+\frac{a\lambda\theta-(\theta^2+\lambda^2)/2}{a-1}\I(\lambda\leq\theta\leq a\lambda)\nonumber\\
 &&+\frac{(a+1)\lambda^2}{2}\I(\theta> a\lambda).
 \ee
\item \textbf{ MCP, \cite{Zha10}}.~The MCP penalty has the following form:
\be
P_{\lambda}(\theta)=\text{sgn}(\theta)\lambda\int_{0}^{|\theta|}(1-\frac{z}{\lambda b})dz,
\ee
where $b>0$ is a fixed parameter and $\text{sgn}(\cdot)$ is the sign function.
\ei

From the definition above,  the SCAD or MCP penalty function is
symmetric,non-convex on $[0,\infty)$ , and singular at the origin. $a=3.7$ and $b=1$ are suggested as a practical choice for the SCAD or MCP penalty respectively for good practical performance in various variable selection problems.

The proposed estimators are obtained by solving the following optimization problem,
\bq\label{non-convex regularized problem}
(\hat{\boldsymbol{\beta}},\hat{\bxi})=\underset{\boldsymbol{\beta} \in \R^{p},\bxi\in\R^{D_n}}{\arg\min}~L(\boldsymbol{\beta},\bxi),
\eq
where $L(\boldsymbol{\beta},\bxi)$, the penalized expectile loss function for our model, is
\bq\label{penalized expectile loss function}
L(\boldsymbol{\beta},\bxi)=\frac{1}{n}\sum_{i=1}^n\phi_{\alpha}(y_i-\x_i'\boldsymbol{\beta}-\bPi(\z_i)'\bxi)+\sum_{j=1}^pP_{\lambda}(|\beta_j|).
\eq
Denote by $\hat{\bxi}=(\hat{\xi}_0,\hat{\bxi}_1,\ldots,\hat{\bxi}_d)$, then the estimator of $g_{0}(\z_i)$ is
\by \hat{g}(\z_i)&=&\hat{\mu}+\sum_{j=1}^d\hat{g}_j(z_{ij}), \ey where
\by
\hat{\mu}&=&\hat{\xi}_0+n^{-1}\sum_{i=1}^n\sum_{j=1}^d \bpi(z_{ij})'\hat{\bxi}_j,\\
\hat{g}_j(z_{ij})&=&\bpi(z_{ij})'\hat{\bxi}_j-n^{-1}\sum_{i=1}^n \bpi(z_{ij})'\hat{\bxi}_j.
\ey
The centering above is just the sample analog of the identifiability assumption  $\ep[g_{0j}(\z_{j})]=0$ for $j=1,\ldots,d$.

\subsection{Algorithm}
For the optimization problem (\ref{non-convex regularized problem}), note that
there is no penalty on the nonparametric coefficients $\bxi$. So instead of taking $(\boldsymbol{\beta},\bxi)$ as the whole optimization parameters, we decompose the optimization problem into two parts respectively: the fixed dimensional unpenalized nonparametirc part and the high dimensional penalized linear part.  An iterative two-step algorithm is proposed by us.

 To be specific, in the first step, we obtain the nonlinear part's parameters by minimizing an unpenalized objective function in $D_n$ dimension with the parameters from the linear part valued at its previous-iteration result. Note  by Lemma \ref{Prop of Loss} that, the expectile loss function $\phi_{\alpha}(\cdot)$ is differentiable and strongly convex, this optimization problem can be easily done by convex analysis. Then after solving the nonparametric optimization problem, in the second step, we obtain the linear part's parameters by minimizing a penalized expectile loss function. Due to the non-convexity of the penalty, we have to deal with a non-convex optimization problem in high dimension. Here we take use of Local Linear Approximation (LLA,~\cite{ZaL08}) strategy to approximate the penalized optimization problem into a convex one, due to its computational efficiency and good statisitical properties, see \cite{FXZ14}. The detailed algorithm is as follows,
\begin{algorithm}[H]
	\caption{The two-step algorithm for  the nonconvex optimization problem (\ref{non-convex regularized problem})}
	\label{LLA algorithm}
	\begin{algorithmic}[1]
		\State Initialize $\boldsymbol{\beta}^{(0)}=\boldsymbol{\beta}^{\text{initial}}$.

\State For $t=1,2,\ldots$, repeat the following iteration (a) and (b) until convergence
	
	\bn[(a)]
		\item \textbf{The nonparametric part}: At $t$-th iteration,
		based on the previous solution $\bbeta^{(t-1)}$, $\bxi^{(t)}$ is obtained by minimizing the following function
		\bqs
		\bxi^{(t)}=\underset{\bxi\in \R^{D_n}}{\arg\min } \frac{1}{n}\sum_{i=1}^n\phi_{\alpha}(y_i-\x_i'\boldsymbol{\beta}^{(t-1)}-\bPi(\z_i)'\bxi).
		\eqs

		\item \textbf{The linear part}: Then next, at $t$-th iteration, $\boldsymbol{\beta}^{(t)}$ is obtained by the following procedure,

		\bn[(b.1)]
		\item At $t$-th iteration, calculate the corresponding weights based on the current solution $\bbeta^{(t-1)}=(\beta_1^{(t-1)},\ldots,\beta_p^{(t-1)})'$
		\bqs \bomega^{(t)}=(\omega_1^{(t)},\ldots,\omega_p^{(t)})'=(P_{\lambda}'(|\beta_1^{(t-1)}|),\ldots,P_{\lambda}'(|\beta_p^{(t-1)}|))'.
		\eqs
		\item the current local linear approximation of  regularized loss function $L(\boldsymbol{\beta},\bxi^{(t)})$, denoted by $L(\boldsymbol{\beta}|\boldsymbol{\beta}^{(t-1)},\bxi^{(t)})$, is
		\bqs
		L(\boldsymbol{\beta}|\boldsymbol{\beta}^{(t-1)},\bxi^{(t)})=\frac{1}{n}\sum_{i=1}^n\phi_{\alpha}(y_i-\x_i'\boldsymbol{\beta}-\bPi(\z_i)'\bxi^{(t)})+\sum_{j=1}^p\omega_j^{(t)}|\beta_j|.
		\eqs
		
		\item $\boldsymbol{\beta}^{(t)}=\underset{\boldsymbol{\beta}\in \R^{p}}{\arg\min}~ L(\boldsymbol{\beta}|\boldsymbol{\beta}^{(t-1)},\bxi^{(t)})$.
		\en
		\en
	\end{algorithmic}
\end{algorithm}

The initial value $\boldsymbol{\beta}^{(0)}$ can be chosen as the estimator from the penalized expectile regression without the nonparametric part. In each step,
the involved optimization problems in the algorithm above are convex after modification, and taking full advantage of expectile regression with its differentiability, there are many powerful programs to solve them. For example, to solve problem (b.3), we can apply the proximal gradient method, and use CVX, a Matlab package for specifying and solving convex programs; see \cite{MaS08},~\cite{MaS13}.

\section{Asymptotic theory}
\setcounter{definition}{0}\setcounter{definition}{0}
\setcounter{equation}{0}\setcounter{lemma}{0}
\setcounter{proposition}{0}\setcounter{theorem}{0}
\setcounter{remark}{0}\setcounter{corollary}{0}

\subsection{Oracle Study}
If we could know what variables were significant in advance,
\cite{Fan01} proposed the so-called oracle estimator as a performance benchmark. Following their idea, we fisrt introduce the oracle estimator for partially linear additive model, denoted by
$(\hat{\bbeta}^*,\hat{\bxi}^*)$
with $\hat{\bbeta}^*=(\hat{\bbeta}_A^{*'},\mathbf{0}_{p-q}^{'})^{'}$ through the following optimization problem:
\be\label{Oracle Estimator Representation}
(\hat{\bbeta}_A^*,\hat{\bxi}^*)=\underset{\bbeta\in \R^{q},\bxi\in \R^{D_n}}{\arg\min}\frac{1}{n}\sum_{i=1}^n\phi_{\alpha}(y_i-\x_{A_i}'\bbeta-\bPi(\z_i)'\bxi).
\ee

The cardinality $q_n$ of the index set $A$ is allowed to change with $n$ so that a more complex statistical model can be fit when more data are collected. This setup violates with the classical scenario (\cite{HaT90}) where the cardinality is fixed.  So we need to investigate the asymptotic property of the oracle estimator above.

Denote by the derivative of $\phi_\alpha(r)$ as $\psi_\alpha(r)$, i.e.,
\bqs \psi_\alpha(r) = 2|\alpha-\I(r<0)|r. \eqs
Notice that  $\phi_\alpha(r)$ does not have second-order derivative at $r=0$, thus we define an analog second-order derivative as $\varphi_\alpha(r)$ as
\bqs
\varphi_\alpha(r) = \begin{cases}
	2|\alpha-\I(r<0)|,\, & r \neq 0, \\
	\in 2[c_1,c_2], & r=0,
\end{cases}
\eqs
where $c_1\triangleq\min\{\alpha,1-\alpha\}$ and $c_2 \triangleq \max\{\alpha,1-\alpha\}$.

For theoretical study purpose, let $w_i=\ep[\varphi_\alpha(\epsilon_i)|\x_i,\z_i]$ and we consider the following weighted projetion from $\x$ onto $\z$,
\be
 h^*_j(\cdot) = \underset{h_j(\cdot)\in\mathcal{G}\cap \mathcal{H}_r }{\arg\inf}\sumn \ep[w_i\cdot (x_{ij}-h_j(\z_i))^2],
\ee
This projection strategy is commonly used  in the semiparametric analysis, see, \cite{Robin88}, \cite{DaN94} and \cite{LaL09}.
Define $m_j(\z) = \ep[x_{ij}|\z_i=\z]$. Then, $h^*_j(\z)$ is a weighted projection from $m_j(\z)$ to $\mathcal{G}\cap \mathcal{H}_r$ under $L_2$ norm, where the weights $w_i$ accommodate for possilbe heterogeneity. Then we define $H = (h^*_j(\z_i) )_{n\times q_n}$, $\delta_{ij} = x_{A_{ij}} - h^*_j(\z_i)$, the vector $\bdelta_i = (\delta_{i1},\ldots,\delta_{iq_n})^{'} \in \R^{q_n}$ and the matrix $\Delta_n = (\bdelta_1,\ldots,\bdelta_n)^{'} \in \R^{n\times q_n}$. Thus, $X_A = H + \Delta_n$.

 Before we give out our results, we need some technical conditions.
\begin{condition}\label{C(1)}
	There exists a positive constant $C$ such that  $\ep\left( \epsilon_{i}^{2k}|x_{i},\z_i \right) < C < \infty$ for all $ i$ and some $k \geq 1$.
\end{condition}

\begin{condition}\label{C(2)}
	There exists positive constants $M_1$ and $M_2$ such that $|x_{ij}| \leq M_1,~\forall1 \leq i \leq n,~1\leq j\leq p_{n}$ and $\ep\left( \delta_{ij}^{4}\right) \leq M_2,~\forall1 \leq i \leq n,~1\leq j\leq q_{n}$. There exist finite positive constants $C_1$ and $C_2$ such that with probability one
	\by
	C_1 \leq \lambda_{max}\left(n^{-1}X_{A}X_{A}^{'}\right) \leq C_2,~~
	C_1 \leq \lambda_{max}\left(n^{-1}\Delta_{n}\Delta_{n}^{'}\right) \leq C_2.
	\ey
\end{condition}
\begin{condition}\label{C(3)}
	For $r = m+v > 1.5$, $g_0(\cdot) \in \mathcal{G}\cap \mathcal{H}_r$. The dimension of the spline basis $k_n$ satisfies $k_n \approx n^{1/(2r+1)}$
\end{condition}
\begin{condition}\label{C(4)}
	$q_n = O(n^{C_3})$ for some $C_3 < \frac{1}{2}$.
\end{condition}

Condition 3.1 does not require the error sequences follow i.i.d. assumption, and what's more, the imposed moment condition
on the random error is more relaxed than the classical Gaussian or sub-Gaussian tail condition. This condition is also used in \cite{Kim08},~\cite{ZCZ18}. Condition 3.2 is about the behavior
of the covariates and the design matrix under the oracle model, which is not
restrictive. Condition 3.3 is typical for the application of B-splines, which guarantees the approximation accuracy and convergence rate of $\hat{g}(\cdot)$, see \cite{Sto85}. Condition 3.4 is about the sparsity level and standard for linear models with diverging number of parameters.
\begin{theorem}\label{oracle estimator asymptotic property}
	Assume conditions 3.1-3.4 hold. Then the oracle estimator obtained by the optimization problem (\ref{Oracle Estimator Representation}) satisfies
	\be
	\parallel\hat{\bbeta}^*_A-\bbeta_A\parallel&=&O_p(\sqrt{n^{-1}q_n}),\\
	n^{-1}\sum_{i=1}^{n}(\hat{g}(\z_i)-g_0(\z_i))^2&=&O_p(n^{-1}(q_n+k_n)).
	\ee
\end{theorem}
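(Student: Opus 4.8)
The plan is to establish the two rates by a two‑stage argument that exploits the fact that, unlike the check loss, the expectile loss $\phi_\alpha$ is convex \emph{and} continuously differentiable with a piecewise‑linear derivative $\psi_\alpha$. First I would reparametrize the oracle problem (\ref{Oracle Estimator Representation}): choose $\bxi_0\in\R^{D_n}$ with $\sup_\z|\bPi(\z)'\bxi_0-g_0(\z)|=O(k_n^{-r})$ (Condition~\ref{C(3)} and standard B‑spline theory), write $W_i=(\x_{A_i}',\bPi(\z_i)')'$, $r_i=g_0(\z_i)-\bPi(\z_i)'\bxi_0$, and center the oracle objective $Q_n$ at $\gamma_0=((\bbeta_A)',\bxi_0')'$, so that the residual at $\gamma_0+u$ is $\epsilon_i+r_i-W_i'u$ and $Q_n$ is convex in the stacked parameter. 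The workhorse is the elementary inequality $c_1t^2\le\phi_\alpha(e-t)-\phi_\alpha(e)+\psi_\alpha(e)t\le c_2t^2$ for all $e,t\in\R$, which is immediate from $\psi_\alpha$ being nondecreasing, $2c_1$‑strongly monotone and $2c_2$‑Lipschitz; averaging it over $i$ yields
\bqs
Q_n(\gamma_0+u)-Q_n(\gamma_0)\ \ge\ -\langle u,\mathbf{S}_n\rangle+c_1\,n^{-1}\sumn(W_i'u)^2,\qquad \mathbf{S}_n:=n^{-1}\sumn\psi_\alpha(\epsilon_i+r_i)W_i.
\eqs

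\emph{Stage one: a joint rate in the empirical seminorm.} I would show the dual norm of $\mathbf{S}_n$ relative to the empirical Gram $n^{-1}\sumn W_iW_i'$ is $O_p(\sqrt{(q_n+k_n)/n})$. Splitting $\psi_\alpha(\epsilon_i+r_i)=\psi_\alpha(\epsilon_i)+[\psi_\alpha(\epsilon_i+r_i)-\psi_\alpha(\epsilon_i)]$, the leading piece is conditionally mean zero because $m_\alpha(\epsilon_i\mid\x_i,\z_i)=0$ forces $\ep[\psi_\alpha(\epsilon_i)\mid\x_i,\z_i]=0$, so after projecting onto the column space of $(W_1,\ldots,W_n)'$ it has squared norm with expectation $O((q_n+k_n)/n)$, using Condition~\ref{C(1)} (which bounds $\ep[\epsilon_i^2\mid\x_i,\z_i]$, hence $\ep[\psi_\alpha(\epsilon_i)^2\mid\x_i,\z_i]\le 4c_2^2\ep[\epsilon_i^2\mid\x_i,\z_i]$), the bound $|x_{ij}|\le M_1$ and the B‑spline basis bound, together with Chebyshev; the Lipschitz piece is $O(k_n^{-r})$‑small and negligible because $n\,k_n^{-2r}\asymp k_n$ under Condition~\ref{C(3)}. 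Then on the seminorm sphere $\{u:n^{-1}\sumn(W_i'u)^2=C^2(q_n+k_n)/n\}$ the displayed lower bound is positive with probability tending to one once $C$ is large, so convexity of $Q_n$ confines the minimizer $\hat u=((\hat{\bbeta}^*_A-\bbeta_A)',(\hat{\bxi}^*-\bxi_0)')'$ to the corresponding ball; this gives $n^{-1}\sumn\big(\x_{A_i}'(\hat{\bbeta}^*_A-\bbeta_A)+\bPi(\z_i)'(\hat{\bxi}^*-\bxi_0)\big)^2=O_p((q_n+k_n)/n)$ and, via the well‑conditioning of the spline Gram matrix implicit in Condition~\ref{C(2)}, a (possibly rescaled) Euclidean bound on $\hat{\bxi}^*-\bxi_0$ that will seed stage two.

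\emph{Stage two: refining the linear part via the normal equations.} Because $Q_n$ is differentiable, $(\hat{\bbeta}^*_A,\hat{\bxi}^*)$ solves $n^{-1}\sumn\psi_\alpha(\hat\epsilon_i)\x_{A_i}=0$ and $n^{-1}\sumn\psi_\alpha(\hat\epsilon_i)\bPi(\z_i)=0$ with $\hat\epsilon_i=y_i-\x_{A_i}'\hat{\bbeta}^*_A-\bPi(\z_i)'\hat{\bxi}^*$. Splitting $\x_{A_i}=H_i+\bdelta_i$ with $H_i=(h^*_1(\z_i),\ldots,h^*_{q_n}(\z_i))'$ and approximating each $h^*_j\in\mathcal G\cap\mathcal H_r$ by B‑splines (uniform error $O(k_n^{-r})$), the second normal equation forces $n^{-1}\sumn\psi_\alpha(\hat\epsilon_i)H_i$ to be $o_p(\sqrt{q_n/n})$, hence $n^{-1}\sumn\psi_\alpha(\hat\epsilon_i)\bdelta_i=o_p(\sqrt{q_n/n})$. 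Linearizing $\psi_\alpha(\hat\epsilon_i)=\psi_\alpha(\epsilon_i)-\varphi_\alpha(\epsilon_i)(W_i'\hat u-r_i)+\rho_i$, where $|\rho_i|\le 4c_2|W_i'\hat u-r_i|\,\I(|\epsilon_i|\le|W_i'\hat u-r_i|)$ is the sign‑crossing remainder, and moving the $\bdelta_i'(\hat{\bbeta}^*_A-\bbeta_A)$ contribution to the left gives
\bqs
\Big(n^{-1}\sumn\varphi_\alpha(\epsilon_i)\bdelta_i\bdelta_i'-n^{-1}\sumn w_i\bdelta_iH_i'\Big)(\hat{\bbeta}^*_A-\bbeta_A)=n^{-1}\sumn\psi_\alpha(\epsilon_i)\bdelta_i+(\text{lower order}),
\eqs
the lower‑order part collecting the weighted cross term $(n^{-1}\sumn w_i\bdelta_i\bPi(\z_i)')(\hat{\bxi}^*-\bxi_0)$, which has mean zero precisely because the weight $w_i$ makes $\bdelta_i$ $w$‑orthogonal to $\mathcal G\cap\mathcal H_r$ (hence to the B‑spline basis), the bias $n^{-1}\sumn w_ir_i\bdelta_i=O_p(k_n^{-r}\sqrt{q_n/n})$, the fluctuation term with $\varphi_\alpha(\epsilon_i)-w_i$, and $n^{-1}\sumn\rho_i\bdelta_i$ --- all $o_p(\sqrt{q_n/n})$ after inserting the stage‑one bounds. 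The matrix on the left equals $n^{-1}\sumn w_i\bdelta_i\bdelta_i'$ up to an operator‑norm‑$o_p(1)$ perturbation (since $\ep[\varphi_\alpha(\epsilon_i)\mid\x_i,\z_i]=w_i\in[2c_1,2c_2]$ and $\|n^{-1}\sumn w_i\bdelta_iH_i'\|_{op}=O_p(q_n/\sqrt n)=o_p(1)$ by Condition~\ref{C(4)}), hence has smallest eigenvalue bounded away from zero with probability tending to one by Condition~\ref{C(2)}; since the right‑hand side is conditionally mean zero with second moment $O(q_n/n)$ (Condition~\ref{C(1)} and $\ep\delta_{ij}^4\le M_2$), this yields $\|\hat{\bbeta}^*_A-\bbeta_A\|=O_p(\sqrt{q_n/n})$. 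Feeding this back, $n^{-1}\sumn(\bPi(\z_i)'(\hat{\bxi}^*-\bxi_0))^2\le 2n^{-1}\sumn(W_i'\hat u)^2+2\lambda_{\max}(n^{-1}X_A'X_A)\|\hat{\bbeta}^*_A-\bbeta_A\|^2=O_p((q_n+k_n)/n)$, and since $\hat g(\z_i)=\bPi(\z_i)'\hat{\bxi}^*$ (the empirical centering in $\hat g$ cancels in the sum) and $\max_i|r_i|=O(k_n^{-r})$ with $n\,k_n^{-2r}\asymp k_n$, we obtain $n^{-1}\sumn(\hat g(\z_i)-g_0(\z_i))^2=O_p((q_n+k_n)/n)$.

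The step I expect to be the real obstacle is controlling the stochastic terms under the weak moment Condition~\ref{C(1)} alone: with no sub‑Gaussian tails, every fluctuation bound must be pushed through second (or fourth) moments and Chebyshev, which is delicate for precisely those stage‑two terms that involve the random estimator $\hat u$ --- the weighted cross terms, the $\varphi_\alpha(\epsilon_i)-w_i$ fluctuation, and the crossing remainder $\rho_i$ --- where a crude empirical‑process supremum is too lossy and one must instead play the stage‑one rates off against the $w$‑weighted orthogonality of the $\bdelta_i$. A secondary, bookkeeping‑type point is that the spline bias $O(k_n^{-r})$ stays subordinate to the diverging $q_n,k_n$ exactly because $k_n\asymp n^{1/(2r+1)}$ (Condition~\ref{C(3)}) gives $n\,k_n^{-2r}\asymp k_n$, and the Hessian perturbation stays $o_p(1)$ exactly because $C_3<1/2$ (Condition~\ref{C(4)}).
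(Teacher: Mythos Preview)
Your outline is correct and lands the two rates, but the execution differs from the paper's in stage two. The paper first reparametrizes with the weighted projection $P=W(W'B_nW)^{-1}W'B_n$ so that $X^*=(I-P)X_A$ is \emph{exactly} $w$-orthogonal to the spline space (their Lemma~\ref{Prop of Design}(3): $\sumn w_i\tilde\x_i\tilde\W(\z_i)'=0$), proves a joint rate $\|\hat\theta\|=O_p(\sqrt{d_n})$ (Lemma~\ref{ShL4}, analogous to your stage one), and then upgrades the linear rate by introducing the one-step target $\tilde\theta_1=n^{-1/2}(X^{*\prime}B_nX^*)^{-1}X^{*\prime}\psi_\alpha(\epsilon)$ and showing, via an objective-function comparison (Lemma~\ref{ShL6}), that $\|\hat\theta_1-\tilde\theta_1\|=o_p(1)$. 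You instead stay with the raw normal equations, split $\x_{A_i}=H_i+\bdelta_i$, kill the $H_i$ piece by approximating $h_j^*$ with B-splines and invoking the spline score equation, and linearize $\psi_\alpha$ directly. Both routes hinge on the same two ingredients---the strong convexity inequality $c_1 t^2\le\phi_\alpha(e-t)-\phi_\alpha(e)+\psi_\alpha(e)t$ for the joint rate and the $w$-weighted orthogonality of $\bdelta_i$ for decoupling---so the arguments are equivalent in spirit. What the paper's reparametrization buys is that the cross term vanishes identically rather than only in mean, so no separate control of the weighted cross $(n^{-1}\sumn w_i\bdelta_i\bPi(\z_i)')(\hat\bxi^*-\bxi_0)$ or of $n^{-1}\sumn w_i\bdelta_iH_i'$ is needed; what your route buys is transparency---you never introduce $P$, $W_B$, $\tilde\W$, or the auxiliary $\tilde\theta_1$, and the sharpening step is the familiar ``score equation plus linearization'' rather than an objective-function sandwich. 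One small caution: the step ``the second normal equation forces $n^{-1}\sumn\psi_\alpha(\hat\epsilon_i)H_i=o_p(\sqrt{q_n/n})$'' needs the mean-zero structure of $\psi_\alpha(\epsilon_i)$ in the leading piece (a crude Lipschitz bound alone would only give $O_p(k_n^{-r}\sqrt{q_n})$, which is not $o_p(\sqrt{q_n/n})$ under $k_n\asymp n^{1/(2r+1)}$); your decomposition already anticipates this, but the order-of-magnitude bookkeeping there should be made explicit.
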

\subsection{Differencing Convex procedure }
 Since we introduce the nonconvex penalty to achieve estimation bias reduction and model consistency in our regularized framework, the objective function $L(\boldsymbol{\beta},\bxi)$ is nonconvex. Furthermore, although the asymmetric least squares loss $\phi_{\alpha}(r)$ defined in (\ref{phi}) is differentiable $\forall r \in \R $, $\phi_{\alpha}(r)$ is not smooth due to non-existence of its second order derivative at the point $r=0$. So the classical KKT condition appears not applicable in this situation to analyze the asymptotic properties of the penalized estimator from the optimization problem (\ref{non-convex regularized problem}).

  Suppose the SCAD penalty is used. Note that $L(\boldsymbol{\beta},\bxi)$ can be decomposed into the difference of two convex functions,
 \bqs
L(\boldsymbol{\beta},\bxi)= k(\boldsymbol{\beta},\bxi)- l(\boldsymbol{\beta},\bxi)
\eqs
 where the two convex functions are given below,
 \footnote{The specific fomula of $H_{\lambda}(\theta)$ depends on the penalty used. Here we choose the SCAD penalty as an example and in fact, the MCP penalty also shares the similar form and leads to the decomposition property of the objective function $L(\boldsymbol{\beta},\bxi)$. }
 \by
k(\boldsymbol{\beta},\bxi)&=&\frac{1}{n}\sum_{i=1}^n\phi_{\alpha}(y_i-\x_i'\boldsymbol{\beta}-\bPi(\z_i)'\bxi)+\lambda\sum_{j=1}^p|\beta_j|,\\
 l(\boldsymbol{\beta},\bxi)&=&\sum_{j=1}^p H_{\lambda}(\beta_j),\,~~~\text{where}\\
 H_{\lambda}(\theta)&=&[(\theta^2-2\lambda|\theta|+\lambda^2)/(2(a-1))]\I(\lambda\leq|\theta|\leq a\lambda)\\
 &&+[\lambda|\theta|-(a+1)^2/2]\I(|\theta|>a\lambda).
 \ey
 Moreover, $H_{\lambda}(\theta)$ is differentiable everywhere,
 \bqs
 H'_{\lambda}(\theta)=[(\theta-\lambda \text{sgn}(\theta))/(a-1)]\I(\lambda\leq|\theta|\leq a\lambda)+\lambda \text{sgn}(\theta)\I(|\theta|>a\lambda).
 \eqs

\cite{TaA97} studied such nonconvex optimization problem where the objective function can be expressed as the difference of two convex functions and gave out  the sufficient condition for the local minimizer, see Lemma \ref{Sufficient condition for local optimal minimizer} for detail.
Therefore, this result in \cite{TaA97} provides us an available approach to exploring the sufficient condition for the local minimizer of the nonconvex optimization problem induced by regularized partially linear additive expectile regression.
Before doing this, we introduce some notations.
Denote by the unpenalized empirical loss function,
\be
L_n(\boldsymbol{\beta},\bxi)=\frac{1}{n}\sum_{i=1}^n\phi_{\alpha}(y_i-\x_i'\boldsymbol{\beta}-\bPi(\z_i)'\bxi).
\ee
 $L_n(\boldsymbol{\beta},\bxi)$ is differentiable with respect to $\bbeta$ and $\bxi$.
Denote by for $j=1,\ldots, p$,
\by
s_j(\bbeta,\bxi)&=&\frac{\partial}{\partial \beta_j}(\frac{1}{n}\sum_{i=1}^n\phi_{\alpha}(y_i-\x_i'\bbeta-\bPi(\z_i)'\bxi))\nonumber\\
&=&-\frac{2}{n}\sum_{i=1}^n\alpha\x_{ij}(y_i-\x_i'\bbeta-\bPi(\z_i)'\bxi)\I(y_i-\x_i'\bbeta-\bPi(\z_i)'\bxi\geq0)\nonumber\\
&&-\frac{2}{n}\sum_{i=1}^n(1-\alpha)\x_{ij}(y_i-\x_i'\bbeta-\bPi(\z_i)'\bxi)\I(y_i-\x_i'\bbeta-\bPi(\z_i)'\bxi<0),
\ey
and $j=p+l,~l=1,\ldots,D_n$,
\by
s_j(\bbeta,\bxi)&=&\frac{\partial}{\partial \xi_l}(\frac{1}{n}\sum_{i=1}^n\phi_{\alpha}(y_i-\x_i'\bbeta-\bPi(\z_i)'\bxi))\nonumber\\
&=&-\frac{2}{n}\sum_{i=1}^n\alpha\bPi_l(\z_i)(y_i-\x_i'\bbeta-\bPi(\z_i)'\bxi)\I(y_i-\x_i'\bbeta-\bPi(\z_i)'\bxi\geq0)\nonumber\\
&&-\frac{2}{n}\sum_{i=1}^n(1-\alpha)\bPi_l(\z_i)(y_i-\x_i'\bbeta-\bPi(\z_i)'\bxi)\I(y_i-\x_i'\bbeta-\bPi(\z_i)'\bxi<0),
\ey
where $\bPi(\z_i)=(1,\Pi_1(\z_i),\ldots,\Pi_{L_n}(\z_i))$ is the basic function at $\z_i$.

Note that $k(\cdot)$ is not differentiable due to the $L_1$ penalty. Define the subdifferential of $k(\btheta)$ at $\btheta=\btheta_0$ as follows:
\bqs
\partial k(\btheta_0)=\{t:k(\btheta)\geq k(\btheta_0)+t'(\btheta-\btheta_0), \forall \btheta\}.
\eqs
 Therefore,  $\partial k(\boldsymbol{\beta},\bxi)=\left\{\boldsymbol{\kappa}=(\kappa_1,\ldots,\kappa_{p+D_n})'\in \R^{p+D_n}\right\}$ has the following expression,
\by
\kappa_j=
\begin{cases}
	s_j(\bbeta,\bxi)+\lambda l_j, &j=1,2,\ldots,p, \\
	s_j(\bbeta,\bxi), &j=p+1,\ldots,p+D_n;
\end{cases}
\ey
where $l_j=\text{sgn}(\beta_j)$ if $\beta_j\neq 0$ or $l_j$ takes value in $[-1,1]$ if $\beta_j= 0$.
 And $\partial l(\boldsymbol{\beta},\bxi)=\left\{\boldsymbol{\mu}=(\mu_1,\ldots,\mu_{p+D_n})'\in \R^{p+D_n}\right\}$ has the following expression,
 \by
 \mu_j=
 \begin{cases}H'_{\lambda}(\beta_j),&j=1,2,\ldots,p,\\
0,&j=p+1,\ldots,p+D_n;
 \end{cases}
 \ey

Consider the subgradient of $L_n(\boldsymbol{\beta},\bxi)$ at the oracle estimator $(\hat{\bbeta}^*,\hat{\bxi}^*)$. Actually,  Lemma \ref{Subgradient of oracle estimator} shows that under the Conditions 3.1-3.4 and the following so-called Beta-min condition,
\begin{condition}[Beta-min condition]\label{C(5)}
	There exist positive constants $C_4$ and $C_5$ such that $C_{3}< C_{4} < 1$ and
	\by
	n^{(1-C_4)/2}\underset{1\leq j \leq q_n}{\min}|\bbeta_{j}^*| \geq C_5,
	\ey
\end{condition}
for the oracle estimator $(\hat{\bbeta}^*,\hat{\bxi}^*)$,
\be
s_j(\hat{\bbeta}^*,\hat{\bxi}^*)&=&0,~~j=1,\ldots,q_n ~\text{or}~ j=p+1,\ldots,p+D_n,\\
|s_j(\hat{\bbeta}^*,\hat{\bxi}^*)|&\leq&\lambda,~~j=q_n+1,\ldots,p.
\ee

Define $\mathcal{E}(\lambda)$ be the set of local minima of $L(\boldsymbol{\beta},\bxi)$ with the tuning parameter $\lambda$. The following theorem builds up the relationship between the oracle estimator and the penalized nonconvex optimization problem (\ref{non-convex regularized problem}): with probability tending to one, the oracle estimator $(\hat{\bbeta}^*,\hat{\bxi}^*)$ is a local minimizer of $L(\boldsymbol{\beta},\bxi)$.
\begin{theorem} \label{ oracle property} Assume Conditions 3.1-3.5 are satisfied. If the tuning parameter  $\lambda = o\left(n^{-(1-  C_4)/2}\right)$, $q_n=o(n\lambda^2)$, $k_n = o(n\lambda^2)$ and $p = o\big((n\lambda^2)^k\big)$ , then we have that with probability tending to one,
	the oracle estimator $(\hat{\bbeta}^*,\hat{\bxi}^*)$ lies in the set $\mathcal{E}(\lambda)$ consisting of local minima of $L(\boldsymbol{\beta},\bxi)$, i.e.,	
\be
\P((\hat{\bbeta}^*,\hat{\bxi}^*)\in \mathcal{E}(\lambda))\rightarrow 1, \text{as}~n\to \infty.
\ee
\end{theorem}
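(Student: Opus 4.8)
The plan is to exploit the difference-of-convex representation $L(\boldsymbol{\beta},\bxi)=k(\boldsymbol{\beta},\bxi)-l(\boldsymbol{\beta},\bxi)$ exhibited above and to apply the sufficient condition of \cite{TaA97} (Lemma \ref{Sufficient condition for local optimal minimizer}): a point is a local minimizer of the DC function $k-l$ once one can locate, in a whole neighborhood of that point, a subgradient of $l$ that also lies in $\partial k$ evaluated at the point itself. Since $l(\boldsymbol{\beta},\bxi)=\sum_{j=1}^p H_\lambda(\beta_j)$ is everywhere differentiable, $\partial l$ reduces to the single vector $\boldsymbol{\mu}$ with $\mu_j=H_\lambda'(\beta_j)$ for $j\le p$ and $\mu_j=0$ for $j>p$. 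So it suffices to show, on an event of probability tending to one, that (i) $\boldsymbol{\mu}$ is locally constant near $(\hat{\bbeta}^*,\hat{\bxi}^*)$, and (ii) that constant vector belongs to $\partial k(\hat{\bbeta}^*,\hat{\bxi}^*)$.

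For (i) I would combine Theorem \ref{oracle estimator asymptotic property} with the Beta-min Condition \ref{C(5)}. From $\|\hat{\bbeta}_A^*-\bbeta_A\|=O_p(\sqrt{n^{-1}q_n})$, Condition \ref{C(4)} and $C_3<C_4$ one gets $\|\hat{\bbeta}_A^*-\bbeta_A\|_\infty=o_p(n^{-(1-C_4)/2})$, whereas $\min_{1\le j\le q_n}|\beta_j^*|\ge C_5 n^{-(1-C_4)/2}$; together with $a\lambda=o(n^{-(1-C_4)/2})$ this yields $\min_{1\le j\le q_n}|\hat{\beta}_j^*|>a\lambda$ with probability tending to one, while $\hat{\beta}_j^*=0$ for $q_n<j\le p$ by construction of the oracle estimator. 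On that event $H_\lambda'$ is constant throughout a small ball around $(\hat{\bbeta}^*,\hat{\bxi}^*)$ -- equal to $\lambda\,\mathrm{sgn}(\hat{\beta}_j^*)$ on the active coordinates (where $|\hat\beta_j^*|>a\lambda$) and to $0$ on the null coordinates (where the argument stays in $(-\lambda,\lambda)$) -- so $\boldsymbol{\mu}$ equals the fixed vector $(\lambda\,\mathrm{sgn}(\hat\beta_1^*),\dots,\lambda\,\mathrm{sgn}(\hat\beta_{q_n}^*),0,\dots,0)'$ there, and the neighborhood hypothesis of Lemma \ref{Sufficient condition for local optimal minimizer} collapses to a single subdifferential membership.

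For (ii) I would match this vector coordinatewise against the description of $\partial k(\boldsymbol{\beta},\bxi)$ recorded above, recalling $\kappa_j=s_j(\hat{\bbeta}^*,\hat{\bxi}^*)+\lambda l_j$ for $j\le p$ (with $l_j=\mathrm{sgn}(\hat\beta_j^*)$ when $\hat\beta_j^*\neq0$ and $l_j\in[-1,1]$ otherwise) and $\kappa_j=s_j(\hat{\bbeta}^*,\hat{\bxi}^*)$ for $p<j\le p+D_n$. The needed identities are exactly Lemma \ref{Subgradient of oracle estimator}: for $1\le j\le q_n$ and for $p+1\le j\le p+D_n$ the oracle first-order conditions give $s_j(\hat{\bbeta}^*,\hat{\bxi}^*)=0$, hence $\kappa_j=\lambda\,\mathrm{sgn}(\hat\beta_j^*)=\mu_j$ and $\kappa_j=0=\mu_j$ respectively; for $q_n<j\le p$ the lemma gives $|s_j(\hat{\bbeta}^*,\hat{\bxi}^*)|\le\lambda$ with probability tending to one, so taking $l_j=-s_j(\hat{\bbeta}^*,\hat{\bxi}^*)/\lambda\in[-1,1]$ forces $\kappa_j=0=\mu_j$. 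Intersecting the finitely many events, $\boldsymbol{\mu}\in\partial k(\hat{\bbeta}^*,\hat{\bxi}^*)$ with probability tending to one, and Lemma \ref{Sufficient condition for local optimal minimizer} then places $(\hat{\bbeta}^*,\hat{\bxi}^*)$ in $\mathcal{E}(\lambda)$.

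The genuinely hard step is the uniform bound $\max_{q_n<j\le p}|s_j(\hat{\bbeta}^*,\hat{\bxi}^*)|\le\lambda$ behind Lemma \ref{Subgradient of oracle estimator}: because the errors are only assumed to have a finite $2k$-th moment (Condition \ref{C(1)}) rather than sub-Gaussian tails, controlling the maximum of $p-q_n$ score coordinates -- each a normalized sum involving the oracle residuals $y_i-\x_{A_i}'\hat{\bbeta}_A^*-\bPi(\z_i)'\hat{\bxi}^*$, into which the spline bias $O(k_n^{-r})$ and the estimation error from Theorem \ref{oracle estimator asymptotic property} must be fed -- requires a moment/truncation argument, and it is precisely this that forces the scaling $q_n=o(n\lambda^2)$, $k_n=o(n\lambda^2)$ and $p=o((n\lambda^2)^k)$, the moment order $k$ entering through the crude union bound over the inactive coordinates. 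The only other subtlety, making sure one really verifies the Tao--An neighborhood condition and not merely the (weaker) critical-point condition, is dispatched by the local constancy of $\boldsymbol{\mu}$ from step (i), which rests on the strict separation $|\hat\beta_j^*|>a\lambda$ and on the strong convexity of $\phi_\alpha$ from Lemma \ref{Prop of Loss}.
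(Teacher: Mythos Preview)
Your proposal is correct and follows essentially the same route as the paper: invoke the Tao--An DC sufficient condition (Lemma \ref{Sufficient condition for local optimal minimizer}), use Theorem \ref{oracle estimator asymptotic property} together with the Beta-min condition to force the active oracle coordinates above $a\lambda$ so that $H_\lambda'$ is locally constant, and then match the resulting $\boldsymbol{\mu}$ coordinatewise against $\partial k(\hat{\bbeta}^*,\hat{\bxi}^*)$ via Lemma \ref{Subgradient of oracle estimator}. The paper makes the neighborhood explicit (a ball of radius $\lambda/2$, with the slightly sharper separation $|\hat\beta_j^*|\ge(a+1/2)\lambda$) and verifies the sign consistency $\mathrm{sgn}(\beta_j)=\mathrm{sgn}(\hat\beta_j^*)$ on that ball, but these are exactly the details your ``local constancy'' step would unfold to.
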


	By the constraints on $\lambda$, we can see that $p=p(n)=o(n^{C_4k})$. So the moment condition and the signal strength directly influence the dimensionality our proposed method can handle. We should address that if the regression error is heavy-tailed and  has only finite polynomial  moments, $p$ can be at most a certain power of $n$.
	If $\epsilon_i$ has all the moments, this asymptotic result holds when $p=O(n^{\tau})$ for any $\tau>0$ since $\ep(\epsilon_i^{2k}|\x_i)<\infty$ for all $k>0$. What's more, if the error $\epsilon$ follows gaussian or sub-gaussian distribution, it can be shown that our method can be applied to ultra-high dimension .

\section{Simulation}
\label{simu}
\setcounter{definition}{0}\setcounter{definition}{0}
\setcounter{equation}{0}\setcounter{lemma}{0}
\setcounter{proposition}{0}\setcounter{theorem}{0}
\setcounter{remark}{0}\setcounter{corollary}{0}

In this section, we assess the finite sample performances of the proposed regularized expectile regression. For the choice of the general folded concave penalty function $P_{\lambda}(t)$, here we use the SCAD penalty as an example \footnote{The same procedure can be carried out with the MCP penalty or other penalties belonging to the general folded nonconvex penalty function}. For convenience, denote the penalized partially linear additive \textbf{E}xpectile regression with the SCAD penalty by E-SCAD for short.

We adopt a high-dimensional partially linear additive model from \cite{SaW16}. In this data generation procedure,
 firstly, the quasi-covariates $\tilde{\x}=(\tilde{x}_1,\ldots,\tilde{x}_{p+2})'$ is generated from the multivariate normal distribution $N_{p+2}(\mathbf{0},\Sigma)$ where $\Sigma=(\sigma_{ij})_{(p+2)\times (p+2)}$, $\sigma_{ij}=0.5^{|i-j|}$ for $i,j=1,\ldots,p+2$. Then we set $x_1=\sqrt{12}\Phi(\tilde{x}_1)$ where $\Phi(\cdot)$ is the cumulative distribution function of the standard normal distribution and $\sqrt{12}$ scales $x_1$ to have standard deviation 1. Furthermore, let $z_1=\Phi(\tilde{x}_{25})$ and $z_2=\Phi(\tilde{x}_{26})$, $x_i=\tilde{\x}_i$ for $i=2,\ldots,24$ and $x_i=\tilde{x}_{i+2}$ for $i=25,\ldots,p$.
  Then the response variable $y$ is generated from the following sparse model,
\bq
y=x_6\beta_6+x_{12}\beta_{12}+x_{15}\beta_{15}+x_{20}\beta_{20}+\sin(2\pi z_1)+z_2^3+\epsilon,
\eq
where $\beta_j = 1$ for $j = 6, 12, 15$ and $20$ and $\epsilon$ is independent of the covariates $\x$. To figure out how the proposed method performs when the error $\epsilon$ shares heavy-tailed distributions or not, we consider the following 2 scenarios,
\bn[(1)]
\item Standard normal distribution $N(0,1)$;

\item Standard t-distribution with degrees of freedom 5, $t_5$.
\en

As expected, when $\epsilon_{i}$ are an i.i.d. sequence, i.e., the homogeneous case, the simulation results (not summarized here for simplicity) show that our proposed method shares good performances in coefficient estimation, nonparametric approximation accuracy and model selection.

Now let us focus our attention to the heteroscedastic case. Here we assume $\epsilon=0.70 x_1\varsigma$ where $\varsigma$ is independent of $x_1$ and follows the two distributions above. In this situation, besides parameter estimation and model selection, we also want to test whether the proposed method can be used to detect heteroscedasticity.
From this data generation procedure above, we can see that the true coefficients in linear part are sparse and only includes 4 informative variables. $x_1$ should also be regarded as the significant variable since it plays an essential role in the conditional distribution of $y$ given the covariates and results in heteroscedasticity.

For comparison purpose, in this simulation, we also investigate the performance of the Lasso-type regularized expectile regression (E-Lasso for short). One may use $L_1$-penalty instead of SCAD penalty in penalized expectile regression and solve the following optimization problem,
\bq\label{lasso}
\underset{\boldsymbol{\beta} \in \R^{p},\bxi\in\R^{D_n}}{\arg\min}\frac{1}{n}\sum_{i=1}^n\phi_{\alpha}(y_i-\x_i'\boldsymbol{\beta}-\bPi(\z_i)'\bxi)+\lambda\sum_{j=1}^p|\beta_j|.
\eq
We aim to show the differences when we use $L_1$-penalty or the SCAD penalty in regularized expectile regression and furthermore tell the reason why we choose the folded concave penalty instead of  $L_1$-penalty  in this article. Besides, we introduce the oracle estimator (\ref{Oracle Estimator Representation}) as the benchmark of estimation accuracy.

 We set sample size $n=300$ and  $p=400$ or 600. For expectile weight level, by the results in~\cite{New87}, positions near the tail seem to be more effective for testing heteroscedasticity, so we consider two positions: $\alpha=0.10,0.90$. Note that when $\alpha=0.50$, expectile regression is exactly the classical ordinary least squares regression, so we also consider the position $\alpha=0.50$ so as to show our proposed method can be used to detect heteroscedasticity when $\alpha\neq0.50$. Given the expectile weight level $\alpha$, there are two tuning parameters in SCAD penalty function, $a$ and $\lambda$. We follow the suggestion proposed by \cite{Fan01} and set $a=3.7 $ to reduce the computation burden. For the tuning parameter $\lambda$, we generate another tuning data set with size $10n$ and choose the $\lambda$ that minimizes the prediction expectile loss error calculated on the tuning data set. For the nonparametric components, we adopt the cubic B-spline with 3 basis functions for each nonparametric function.

We repeat the simulation procedure 100 times and evaluate the performance in term of the following criteria:
\bi
\item AE:~the average absolute estimation error defined by $\sum_i^p|\hat{\beta}_j-\beta_j^*|$.
\item SE:~the average square estimation error defined by $\sqrt{\sum_i^p|\hat{\beta}_j-\beta_j^*|^2}$.
\item ADE:~the average of the average absolute deviation (ADE) of the fit of the nonlinear part defined by $\frac{1}{n}\sum_{i=1}^n|\hat{g}(\z_i)-g_0(\z_i)|$
\item Size:~the average number of nonzero regression coefficients $\hat{\beta}_j\neq0$ for $j=1,\ldots,p$.  Given the role of $x_1$, the true size of our data generation model supposes to be 5.
\item F:~the frequency that $x_6,x_{12},x_{15},x_{20}$ are selected during the 100 repetitions.
\item F1:~the frequency that $x_1$ is selected during the 100 repetitions.
\ei

\begin{table}[!htp]
	\centering
	\fontsize{8}{8}\selectfont
	\begin{threeparttable}
		\caption{Simulation results when $n=300$, $p=400$.}
		\label{tab:performance_comparison 1}
		\begin{tabular}{cccccccc}
			\toprule
			\multirow{2}{*}&\multirow{2}{*}{Criteria}& \multicolumn{3}{c}{$N(0,1)$}&\multicolumn{3}{c}{${t_5}$}\cr
			\cmidrule(lr){3-5} \cmidrule(lr){6-8}
			&&E-SCAD&E-Lasso&Oracle&E-SCAD&E-Lasso&Oracle\cr
			\midrule
			\multirow{5}{*}{$\alpha=0.10$}
			&AE&0.76(0.27)&1.94(0.42)&0.83(0.17)&1.21(0.77)&3.01(1.22)&1.13(0.33)\cr
			&SE&0.47(0.20)&0.59(0.11)&0.56(0.11)&0.62(0.32)&0.83(0.21)&0.72(0.18)\cr
			&ADE& 0.54(0.10)&0.67(0.10) &0.26(0.08)& 0.62(0.10)&0.73(0.15) &0.36(0.13)\cr
			&Size&6.79(1.73)&24.05(5.51)&-&8.46(3.63)&28.25(8.82)&-\cr
			&F,F1&100,~87&100,~97&-&100,~73&100,~94&-\cr
			\midrule
			\multirow{5}{*}{$\alpha=0.50$}
			&AE&0.31(0.14)&1.26(0.30)&0.28(0.11)& 0.47(0.16)&1.49(0.23)&0.41(0.13)\cr
			&SE&0.18(0.08)& 0.41(0.09)&0.16(0.06)&0.25(0.10)&0.50(0.09)&0.22(0.07)\cr
			&ADE&0.38(0.24) & 0.37(0.24)&0.18(0.05)& 0.44(0.18)&0.43(0.18)& 0.32(0.12)\cr
			&Size&4.64(0.78)&21.78(4.86)&-&6.49(1.42)&20.43(3.01)&-\cr
			&F,F1&100,~0&100,~8&-&100,~0&100,~8&-\cr
			
			\midrule
			\multirow{5}{*}{$\alpha=0.90$}
			&AE&0.74(0.27)&1.76(0.36)&0.82(0.16)&1.07(0.52)&2.94(1.23)&1.12(0.31)\cr
			&SE&0.47(0.20)&0.59(0.09)&0.56(0.11)&0.59(0.26) &0.84(0.19)&0.71(0.18)\cr
			&ADE&0.49(0.14)& 0.76(0.22)&0.25(0.09)& 0.52(0.37)& 0.68(0.13)& 0.38(0.13)\cr
			&Size&6.21(1.39)&19.54(4.91)&-&7.74(3.19)&26.06(9.19)&-\cr
			&F,F1&100,~88&100,~96&-&100,~76&100,~87&-\cr
			
			\bottomrule
		\end{tabular}
	\end{threeparttable}
\end{table}

\begin{table}[!htp]
	\centering
	\fontsize{8}{8}\selectfont
	\begin{threeparttable}
		\caption{Simulation results when $n=300$, $p=600$.}
		\label{tab:performance_comparison 2}
		\begin{tabular}{cccccccc}
			\toprule
			\multirow{2}{*}&\multirow{2}{*}{Criteria}&\multicolumn{3}{c}{$N(0,1)$}&\multicolumn{3}{c}{${t_5}$}\cr
			
			\cmidrule(lr){3-5} \cmidrule(lr){6-8}
			&&E-SCAD&E-Lasso&Oracle&E-SCAD&E-Lasso&Oracle\cr
			\midrule
			\multirow{5}{*}{$\alpha=0.10$}
			&AE&0.97(0.27)&2.11(0.44)&0.86(0.17)&1.36(0.85)&3.74(1.16)&1.14(0.25)\cr
			&SE&0.54(0.14)&0.64(0.10)&0.57(0.10)&0.66(0.29)&0.86(0.17)&0.72(0.15)\cr
			&ADE&0.50(0.27)&0.62(0.07)&0.24(0.07)& 0.81(0.51)&0.95(0.29)&0.38(0.12)\cr
			&Size&9.68(2.78)&26.55(5.60)&-&10.67(4.71)&42.53(9.50)&-\cr
			&F,F1&100,~96&100,~97&-&100,~77&100,~86&-\cr
			\midrule
			\multirow{5}{*}{$\alpha=0.50$}
			&AE&0.31(0.13)&1.50(0.36)&0.35(0.11)&0.63(0.27)&1.85(0.52)&0.43(0.13)\cr
			&SE&0.17(0.07)&0.43(0.09)&0.18(0.06)&0.32(0.12)&0.55(0.10)&0.23(0.07)\cr
			&ADE&0.38(0.23)&0.50(0.19)&0.18(0.04)&0.18(0.02)&0.20(0.06)&0.23(0.06)\cr
			&Size&5.61(1.55)&29.16(6.05)&-&7.36(3.16)&26.74(7.11)&-\cr
			&F,F1&100,~1&100,~10&-&100,~3&100,~5&-\cr
			
			\midrule
			\multirow{5}{*}{$\alpha=0.90$}
			&AE&0.82(0.27)&1.80(0.39)&0.83(0.15)&1.12(0.53)&3.18(1.64)&1.16(0.28)\cr         &SE&0.49(0.19)&0.64(0.10)&0.56(0.10)&0.60(0.28)&0.88(0.23)&0.72(0.15)\cr
			&ADE&0.40(0.21)&0.58(0.10)&0.24(0.09)&0.33(0.04)&1.09(0.54)&0.40(0.22)\cr
			&Size&7.72(2.14)&17.60(4.67)&-&8.32(3.36)&28.80(10.98)&-\cr
			&F,F1&100,~88&100,~94&-&99,~79&100,~83&-\cr
			\bottomrule
		\end{tabular}
	\end{threeparttable}
\end{table}

Table \ref{tab:performance_comparison 1} and Table \ref{tab:performance_comparison 2}  summarize the simulation results, corresponding to $p=400$ and $p=600$ respectively. In general, compared to E-Lasso, E-SCAD has a much better estimation accuracy and tends to select a much smaller model, and its performances are much closer to that of the oracle estimator. So we may tend to use the SCAD penalty  instead of the Lasso penalty in practise. As for further performances of E-SCAD,
first note that in our simulation setting  $m_{\alpha=0.5}(\epsilon|\x,\z)=0$,
then Theorem 3.2 tells that,  at this moment, E-SCAD has better performances in estimation accuracy and model selection than those with other weight levels $\alpha =0.1~\text{or}~0.9$, justified by
 the "AE","ADE" and "SE" results in Table \ref{tab:performance_comparison 1} and Table \ref{tab:performance_comparison 2}.  However,  the variance heterogeneity does not show up in $m_{\alpha=0.50}(y|\x,\z)$ so that at this situation E-SCAD can not pick $x_1$,  the active variable resulting in heteroscedasticity. Expectile regression with different weights can actually help solve this problem.
We can see that at $\alpha =0.1~\text{and}~0.9$,  $x_1$ can be identified  as the active variable with high frequency. On the other hand, from  Table \ref{tab:performance_comparison 1} to Table \ref{tab:performance_comparison 2}, as $p$ increases, we can see that the performances of E-SCAD get a little worse, and this change is more obvious in $t_5$ case. We have to say that the dimensionality our proposed method can handle is influenced by the heavy-tailed characteristics of the error. Our theoretical study indicates that if the regression error has only finite moments like in the $t_5$ case( $\ep\epsilon^{4+\delta}<\infty$a for $\delta \in (0,1)$), $p$ can be at most a certain power of $n$.

\section{Real Data Application}
\label{Real Data}
\setcounter{definition}{0}\setcounter{definition}{0}
\setcounter{equation}{0}\setcounter{lemma}{0}
\setcounter{proposition}{0}\setcounter{theorem}{0}
\setcounter{remark}{0}\setcounter{corollary}{0}

Low infant birth weight has always been a comprehensive quantitative trait, as it affects directly the post-neonatal mortality, infant and childhood morbidity, as well as its life-long body condition. Thus, on purpose of public health intervention, scientists have long put considerable investigation onto the low birth weight's determinants, see~\cite{Kra87}, who investigated 43 potential determinants and used a set of priori methodological standards to assess the existence and magnitude of the effect from potential factor to low birth weight. And~\cite{Tur12} used gene promoter-specific DNA methylation levels to identify genes correlated to low birth weight, with cord blood and placenta samples collected from each newborn. \cite{Voa11} collected samples of peripheral blood, placenta and cord blood from pregnant smokers$(n = 20)$ and gravidas$(n = 52)$ without significant exposure to cigarettes smoke. Their purpose was to identify the tobacco smoke-related defects, specifically, the transcriptome alterations of genes induced by the smoke. As the infant's birth weight was recorded along with the age of mother, gestational age, parity, maternal blood cotinine level and mother's BMI index, we consider using this data set to depict the infant birth weight's determinants. With a total of 65 observations contained in this genetic set, the gene expression profiles were assayed by Illumina Expression Beadchip v3 for the $24,526$ genes transcripts and then normalized in a quantile method.

To investigate the low birth weight of infant, we apply our partially linear additive penalized expectile regression model onto this data set. We consider to include the normalized genetic data, clinic variables parity, gestational age, maternal blood cotinine level and BMI as part of the linear covariates. And we take the age of mother as the nonparametric part to help explain nonlinear effect, according to~\cite{Voa11}. For sake of the possibly existing heteroscedasticity of these data and to dissect the cause of low infant birth weight, the analysis is carried out under three different expectile levels $\alpha = 0.1, 0.3$ and $0.5$. And in each scenario, feature screening methods could be used to select the top 200 relevant gene probes, see~\cite{Fan08} and \cite{He13}. Here in our data analysis, we choose to use the SCAD penalty (see Example 2.1) in our regularized framework and denote it by E-SCAD for short.  Other nonconvex  penalty functions like MCP penalty function could also be applied with our model, for which we don't give unnecessary details. For comparison purpose, we also consider penalized semiparametric regression with $L_1$ norm penalty, i.e., the Lasso-type regularized framework, named as E-Lasso. As recommended in~\cite{Fan01}, the parameter $a$ in the SCAD penalty is set to be 3.7 in order to reduce computation burden. As for the tuning parameter $\lambda$,  here we adopt the five-folded cross validation strategy to determine its value for both E-SCAD and E-Lasso.

First, we apply our proposed E-SCAD method to the whole data set at three different expectile levels $\alpha = 0.1, ~0.3$ and $0.5$. And for each level, the set of selected variables in the linear part of our model is denoted by $\hat{\mathcal{A}}_\alpha$, along with its cardinality $|\hat{\mathcal{A}}_\alpha|$. Taking the possibly existing heteroscedasticity into consideration, we also display the number of overlapped selected variables under different expectile levels, denoted by $|\hat{\mathcal{A}}_{0.1}\cap\hat{\mathcal{A}}_{0.5}|$ and  $|\hat{\mathcal{A}}_{0.3}\cap\hat{\mathcal{A}}_{0.5}|$. The number of selected variables and overlapped variables are reported in Table \ref{tab:Real1}. Next, we randomly partition the whole data set into a training data set of size 50 and a test set of size 15. Then, E-SCAD is used on the training set to obtain regression coefficients $\hat{\bbeta}$, and the estimated coefficients are used to predict the responses of 15 individuals in the test set. We repeat the random splitting process $100$ times. The variable selection results under random partition scenario are also shown in Table \ref{tab:Real1}. We also report the mean absolute error $L_1 = \frac{1}{24}{\sum_{i \in test\,set}}|y_i - x_i'\hat{\bbeta}|$, and the mean squared error, $L_2 = \frac{1}{24}\sqrt{{\sum_{i \in test\,set}}(y_i - x_i'\hat{\bbeta})^2} $ for prediction. Also, a boxplot of those two error values has been displayed in Figure 1.

\begin{table}[!htp]
 \centering
  \fontsize{5.6}{8}\selectfont
  \begin{threeparttable}
  \caption{Numeric results at three expectile levels}
  \label{tab:Real1}
    \begin{tabular}{cccccccc}
    \toprule
   \multirow{2}{*}&  \multirow{2}{*}{\textbf{Criteria}} & \multicolumn{2}{c}{$\alpha=0.1$} & \multicolumn{2}{c}{$\alpha=0.3$} & \multicolumn{2}{c}{$\alpha=0.5$} \cr
    \cmidrule(lr){3-4}\cmidrule(lr){5-6}\cmidrule(lr){7-8}
                                                        &&E-SCAD&E-LASSO&E-SCAD&E-LASSO&E-SCAD&E-LASSO\cr
    \midrule
   \multirow{4}{*}{All Data}
   & $L_1$ & 0.66   & 0.67& 0.60&0.53 &0.38 &0.34 \\
    &$L_2$ & 0.12& 0.11&0.10 & 0.09&0.06 &0.05\\
    &$\hat{\mathcal{A}} _\alpha$ & 7.00  &    8.00& 9.00 &19.00 &14.00 &20.00 \\
    &$\hat{\mathcal{A}}_\alpha \bigcap\hat{\mathcal{A}}_{0.5}  $ & 1&1 &3 &3 & -&- \\
    \midrule
      \multirow{4}{*}{Random Partition}
       & ${L_1}$ &     0.90(0.21)  &  0.74(0.17) & 0.81(0.17)& 0.59(0.13)&  0.89(0.20)&  0.41(0.08)\\
    &${L_2}$ &         0.30(0.07) & 0.26(0.06)&0.27(0.06) & 0.19(0.04)& 0.30(0.06)&0.13(0.02) \\
    &$\hat{\mathcal{A}} _\alpha$ & 5.72(1.91)   &8.19(2.74) & 9.00(2.72)& 13.94(3.03)&   4.72(1.83)& 20.25(2.67)\\
    &$\hat{\mathcal{A}}_\alpha \bigcap\hat{\mathcal{A}}_{0.5}$ & 3.86 (1.6433) &  1.16(0.39) &2.27(1.13) & 3.25(1.18) & -&- \\
    \bottomrule

      \end{tabular}
    \end{threeparttable}
\end{table}
\begin{figure}[H]
\label{fig:real1}
\centering
\includegraphics[width=5in,height = 3in]{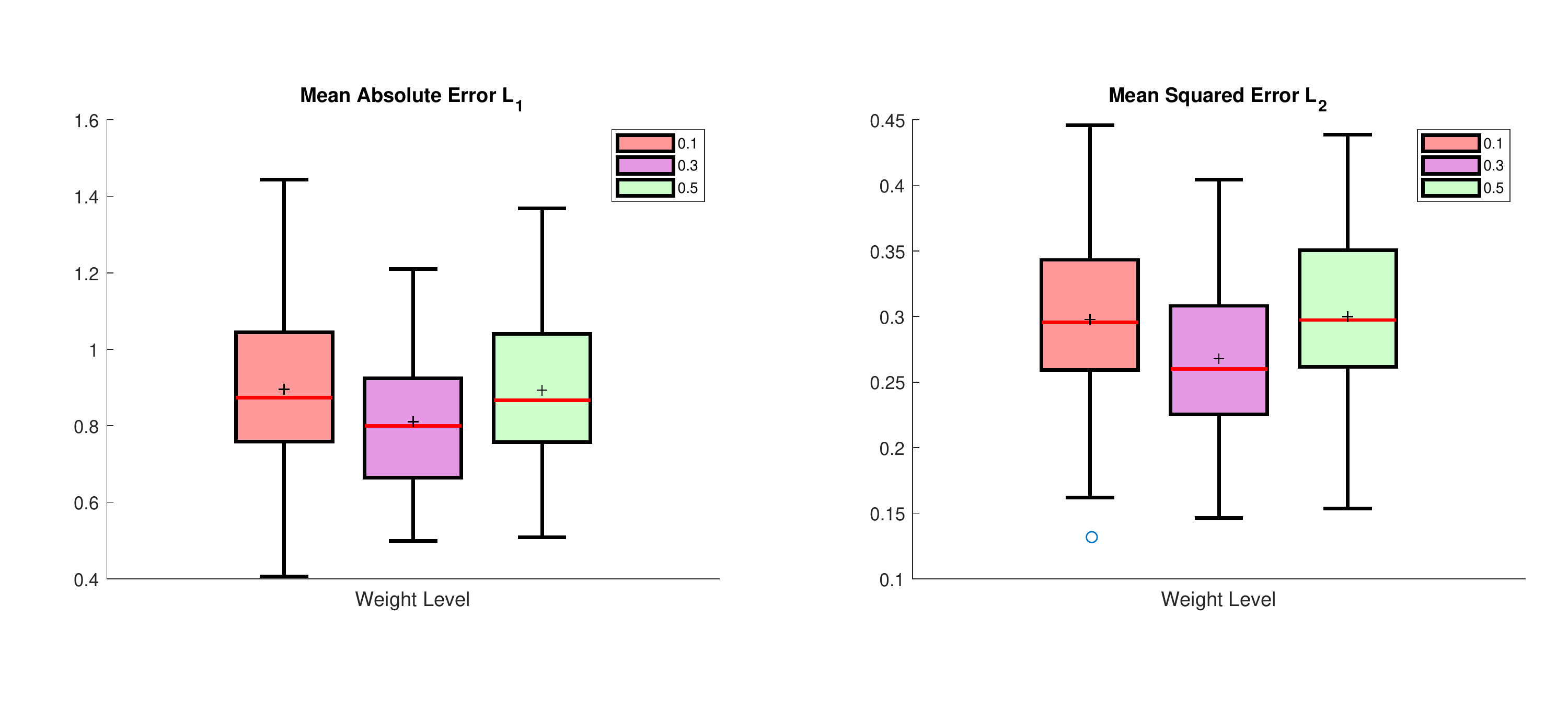}
\caption{Boxplots of Prediction Errors.}
\end{figure}

\begin{table}[!htp]
 \centering
  \fontsize{5.6}{8}\selectfont
  \begin{threeparttable}
  \caption{Top 6 Frequent Covariates Selected at Three Expectile Weight Levels  among 100 Partitions}
  \label{tab:Real3}
    \begin{tabular}{cccccc}
    \toprule
    \multicolumn{2}{c}{E-SCAD $\alpha = 0.1$} & \multicolumn{2}{c}{E-SCAD $\alpha=0.3$} & \multicolumn{2}{c}{E-SCAD $\alpha=0.5$} \\
     \cmidrule(lr){1-2}\cmidrule(lr){3-4}\cmidrule(lr){5-6}
     {Variables} &{Frequency}&{Variables} &{Frequency}&{Variables} &{Frequency}\\
     \midrule
PTPN3 &	34	&GPR50	&46		&PTPN3&	33\\
FXR1&	 40	&FXR1&	49	&	GPR50&	40\\
GPR50	 &43	&EPHA3	&50		&FXR1	&41\\
LEO1&	43	&LEO1	&59		&LEO1	&44\\
SLCO1A2	&63	&LOC388886	&65		&SLCO1A2	&65\\
	Gestational age&79&	Gestational age&	97&	Gestational age&		83\\

    \bottomrule
      \end{tabular}

    \end{threeparttable}
\end{table}
 As shown in Figure 1, the underlying models selected under three expectile  levels $\alpha = 0.1,0.3,0.5$ all lead to a relatively small prediction error. In Table \ref{tab:Real1}, the selected genes and corresponding cardinalities are different for different weight levels, which means that different levels of birth weight are influenced by different genes,
 an indication of heterogeneity in the data.  Table \ref{tab:Real3} tells us more about this fact. Gestational age is the most frequently selected covariate under all three scenarios, explaining the known fact that premature birth is usually accompanied by low birth weight. Besides, SLCO1A2, LEO1, FXR1 and GPR50 appear frequently in all three cases. Furthermore,  an interesting observation arises that the scenarios $\alpha =0.1$ and $\alpha =0.5$ perform similarily while the scenario $\alpha =0.3$ displays some different characteristics. Gene SLCO1A2 is selected with higher frequencies when $\alpha = 0.1$ and $\alpha = 0.5$. This gene is known to have resistance against drug use and moreover,  according to \cite{Voa11}, exposure to toxic compounds contained in tobacco smoke can be strongly associated with low birth weight. Gene EPHA3 is more frequently selected under the scenario $\alpha =0.3$ compared with other two cases. As shown in \cite{Lv18}, EPHA3 is likely to contribute tumor growth in human cancer cells, which may make pregnant women more sensitive to chemical compounds contained in cigarette smoke. And the study of \cite{Ku05} concludes that EPHA3's expression at both the mRNA and protein level is a critical issue during mammalian newborn forebrain's development. These results can well account for our analysis under different expectile values and may furthermore, arise more our attention upon the $\alpha = 0.3$ expectile value case, due to its specially selected results and potentially underlying biomedical meaning.

\section{Appendix}
\label{Appe}
\setcounter{definition}{0}\setcounter{definition}{0}
\setcounter{equation}{0}\setcounter{lemma}{0}
\setcounter{proposition}{0}\setcounter{theorem}{0}
\setcounter{remark}{0}\setcounter{corollary}{0}

\subsection{Explanation Materials before the proof.}

We are going to use the theoretically centered B-spline basis functions to simplify the proof. Notify B-spline basis functions $b_j(\cdot)$ in Section 2 can be centered as $B_j(z_{ik}) = b_{j+1}(z_{ik}) - \frac{\ep[b_{j+1}(z_{ik})]}{\ep[b_{1}(z_{ik})]}b_1(z_{ik})$, for $j = 1,\ldots,k_n+l$, satisfying $\ep[B_j(z_{ik})] = 0$. We denote a vector of centered basis functions as  ${\mathbf{w}} (z_{ik}) = (B_1(z_{ik}),\ldots,B_{k_n+l}(z_{ik}))^{'}$, the $J_n$-dimensional vector $\W(\z_i) = (k_n^{-1/2},{\mathbf{w}} (z_{i1})^{'},\dots{\mathbf{w}} (z_{id})^{'}))^{'}$, where $J_n = d(k_n +l) +1$ and the $n\times J_n$ matrix $W = (\W(\z_1),\ldots,\W(\z_n))^{'}$. According to \cite{Sch07}, there exists a vector $\gamma_0 \in \R^{J_n}J$  such that $\sup_{\z\in[0,1]^d}|g_0(\z) - \W(\z)^{'}\gamma_0| = O(k_n^{-r})$. Define a new pair of minimizer $(\hat{\c}_A,\hat{\bgamma})$ as
\by
(\hat{\c}^*_A,\hat{\bgamma}) = \underset{(\c_A,\bgamma)}{\arg \min}\frac{1}{n}\sumn\phi_\alpha(y_i - \x_{Ai}^{'}\c_A-\W(\z_i)^{'}\bgamma)
\ey
Same to  Section 3, we write ${\bgamma} = (\gamma_0,\bgamma^{'}_1,\ldots,\bgamma^{'}_d)^{'}$ with $\gamma_0 \in \R$ and $\bgamma_j  \in \R^{k_n+l}$, for $j = 1,\ldots,d$. We define the estimators for those nonparametric functions in this new way as $\tilde{g}_j(\z_i) = \w(z_{ij})^{'}\hat{\bgamma_j},~ j = 1,\ldots,d$ and the estimator for $\mu_{0}$ is $\tilde{\mu} = k_n^{-1/2}\hat{\gamma}_0$. Thus, the estimator for $g_0(\z_i)$ is $\tilde{g}(\z_i) = \W(\z_i)^{'}\hat{\bgamma} = \tilde{\mu} + \sum_{j =1}^{d}\tilde{g}_j(z_{ij})$.

It should be notified that $\hat{\c}^*_A = \hat{\bbeta}^*_A$, that is the centered B-spline basis functions do not alter the linear part, according to \cite{SaW16}. Also, it has been shown that the original estimator of nonparametric functions can be derived from the  new ones as $
\hat{\mu} = \tilde{\mu} + \frac{1}{n}\sumn\sum_{j = 1}^{d}\tilde{g}_j(z_{ij})$ and $
\hat{g}_j(z_{ij}) = \tilde{g}_j(z_{ij}) - \frac{1}{n}\sumn{\tilde{g}_j(z_{ij})}$. Thus, we have $\hat{g}(\z_i) = \tilde{g}(\z_i)$.

\subsection{Notation.}

Throughout the proofs, we denote C a positive constant which does not depend on n and may vary from line to line. For a vector $\btheta$, $||\btheta||$ refers to its $L_2$ norm. For a matrix X, $||X|| = \sqrt{\lambda_{max}(X^{'}X)}$ denotes its spectral norm. Also, we introduce the following notations.

Furthermore, we have following notations throughout the appendix.
\by
B_n &=& \text{diag}(w_1,\ldots,w_n) \in \R^{n\times n},\\
W_n &= & n^{-1}\sum_i w_i\delta_i\delta_i^{'} \in \R^{n\times n},\\
P&=&W(W^{'}B_nW)^{-1}W^{'}B_n\in \R^{n\times n},\\
W_B^2 &= &W^{'}B_nW \in \R^{J_n\times J_n},\\
X^{*}&=&(\x_1^{*},\ldots,\x_n^{*})^{'} = (I_n - P)X_A\in \R^{n\times q_n},\\
\tilde{\x}_i &= &n^{-1/2}\x^*_i \in \R^{q_n},\\
\tilde{\W}(\z_i) &= &W_B^{-1}\W(\z_i)\in \R^{J_n},\\
\tilde{s}_i &=& (\tilde{\x}_i ^{'},\tilde{\W}(\z_i) ^{'})^{'} \in \R^{J_n+q_n},\\
\btheta_1 &=& \sqrt{n}(\c_A-\beta_{A}^*)\in \R^{q_n} ,\\
\btheta_2&=& W_B(\bgamma-\bgamma_0)+W_B^{-1}W^{'}B_nX_A(\c_A - \bbeta_{A}^*)\in \R^{q_n} ,\\
u_{ni} &=& \W(\z_i)^{'}\bgamma_0 -g_0(\z_i)
\ey

 Under the new notation system, the objective loss function can be displayed as
 \by
 \frac{1}{n}\sumn\phi_\alpha(y_i - \x_{Ai}^{'}\c_A-\W(\z_i)^{'}\bgamma) = \frac{1}{n}\sumn\phi_\alpha(\epsilon_i - u_{ni} - \tilde{\x}^{'}_i\btheta_1- \tilde{\W}(\z_i)^{'}\btheta_2),
 \ey
and the transformed minimizers are
\by
(\hat{\btheta}_1,\hat{\btheta}_2) = \underset{(\btheta_1,\btheta_2)}{\arg\min}\frac{1}{n}\sumn\phi_\alpha(\epsilon_i - u_{ni} - \tilde{\x}^{'}_i\btheta_1- \tilde{\W}(\z_i)^{'}\btheta_2)
\ey

\subsection{Lemmas and Theoretical Proof of Theorems.}

First, we are going to show some properties for the B-spline basis vectors and design matrices.
\begin{lemma}[Properties of Loss Function] \label{Prop of Loss}
	The loss function $\phi_\alpha(\cdot)$ is defined in (\ref{phi}) and we set $c_1\triangleq\min\{\alpha,1-\alpha\}$ and $c_2 \triangleq \max\{\alpha,1-\alpha\}$. Thus, $\phi_\alpha(\cdot)$ satisfies:
  \bn[(1)]
  \item  $ \phi_\alpha(u)$ is continuous differentiable. Moreover, for any $r,r_0\in \R$, we have
	\bq
	c_1\cdot(r-r_0)^2\leq \phi_{\alpha}(r)-\phi_{\alpha}(r_0)-\psi_{\alpha}(r_0)\cdot(r-r_0)\leq c_2\cdot(r-r_0)^2.
	\eq
  \item $ \psi_\alpha(u)$ is   Lipschitz continuous, which means for any $r,r_0\in \R$, we have
	\bq
2c_1|r-r_0|\leq |\psi_\alpha(r)-\psi_\alpha(r_0)| \leq 2c_2|r-r_0|.
	\eq
\item $ \varphi_\alpha(u)\leq 2c_2$ and $w_i \triangleq\ep[\varphi_\alpha(\epsilon_i)]$ is uniformly bounded away from zero.
\en
\end{lemma}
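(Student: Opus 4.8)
The plan is to verify the three assertions by elementary calculus together with a case analysis on the signs of the arguments, exploiting the piecewise-quadratic form of $\phi_\alpha$. First I would establish the $C^1$ claim: on $(0,\infty)$ one has $\phi_\alpha(r)=\alpha r^2$, so $\phi_\alpha'(r)=2\alpha r$; on $(-\infty,0)$ one has $\phi_\alpha(r)=(1-\alpha)r^2$, so $\phi_\alpha'(r)=2(1-\alpha)r$; and both one-sided derivatives at $0$ vanish. Hence $\phi_\alpha$ is differentiable on $\R$ with $\phi_\alpha'=\psi_\alpha$, and $\psi_\alpha$ is visibly continuous. I would also record here that $\psi_\alpha$ is nondecreasing (each branch has positive slope $2\alpha$ or $2(1-\alpha)$ and $\psi_\alpha(0)=0$ sits between the branches), a fact used below.

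Next I would prove part (2). With $c_1=\min\{\alpha,1-\alpha\}$ and $c_2=\max\{\alpha,1-\alpha\}$, split on the signs of $r,r_0$. If $r,r_0\ge 0$ then $\psi_\alpha(r)-\psi_\alpha(r_0)=2\alpha(r-r_0)$ and $\alpha\in[c_1,c_2]$ gives the two-sided bound at once; the case $r,r_0<0$ is identical with $1-\alpha$ in place of $\alpha$. If $r\ge 0>r_0$ then $\psi_\alpha(r)-\psi_\alpha(r_0)=2\alpha r+2(1-\alpha)|r_0|\ge 0$ while $|r-r_0|=r+|r_0|$, and bounding the two nonnegative terms separately with $\alpha,1-\alpha\in[c_1,c_2]$ yields $2c_1|r-r_0|\le\psi_\alpha(r)-\psi_\alpha(r_0)\le 2c_2|r-r_0|$; the sub-case $r_0\ge 0>r$ is symmetric.

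Then part (1) follows by integration: since $\phi_\alpha\in C^1$,
\[
\phi_\alpha(r)-\phi_\alpha(r_0)-\psi_\alpha(r_0)(r-r_0)=\int_{r_0}^{r}\bigl(\psi_\alpha(t)-\psi_\alpha(r_0)\bigr)\,dt .
\]
For $r>r_0$ and $t\in[r_0,r]$, monotonicity together with part (2) gives $2c_1(t-r_0)\le\psi_\alpha(t)-\psi_\alpha(r_0)\le 2c_2(t-r_0)$, and integrating produces $c_1(r-r_0)^2\le\phi_\alpha(r)-\phi_\alpha(r_0)-\psi_\alpha(r_0)(r-r_0)\le c_2(r-r_0)^2$; for $r<r_0$ the same bounds hold because reversing the orientation flips the sign of both $dt$ and the integrand. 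Finally, for part (3): for $r\ne 0$, $\varphi_\alpha(r)=2|\alpha-\I(r<0)|\in\{2\alpha,2(1-\alpha)\}\subseteq[2c_1,2c_2]$, and at $r=0$ the value lies in $2[c_1,c_2]$ by definition, so $2c_1\le\varphi_\alpha(r)\le 2c_2$ for every $r$; hence $w_i=\ep[\varphi_\alpha(\epsilon_i)]\in[2c_1,2c_2]$, so $w_i\ge 2c_1>0$ with $2c_1$ independent of $i$, i.e.\ $w_i$ is uniformly bounded away from zero.

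I do not expect a genuine obstacle here; the only points needing a little care will be the mixed-sign case in part (2) — where one must bound the two slope-branches $2\alpha r$ and $2(1-\alpha)|r_0|$ separately rather than factoring out a single constant — and keeping track of the orientation of the integral in part (1) when $r<r_0$.
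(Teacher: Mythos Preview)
Your proof is correct and entirely self-contained. The paper's own ``proof'' simply cites \cite{GaZ16} for parts (1) and (2) and declares part (3) obvious from the definition, so there is no real approach to compare against; your elementary case analysis on signs for (2), followed by the integral-remainder argument to deduce (1), and the direct pointwise bound $\varphi_\alpha\in[2c_1,2c_2]$ for (3), constitute a complete and clean verification where the paper offers only a reference. The mild care you flag --- handling the mixed-sign case in (2) termwise and tracking the orientation of the integral when $r<r_0$ --- is exactly right and the argument goes through without issue.
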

\proof Details for (1) and (2) can be found in \cite{GaZ16}. From the definition of $\varphi_\alpha(u)$, the third result is obvious.\qed

\begin{lemma}[Properties of Spline Basis Functions]\label{Prop of Spline}
Here are some properties for centered basis function vectors.
\bn[(1)]
\item $\max_{i} \ep||\W(\z_i)||\leq m_1$, for some positive constant $m_1$ for a sufficiently large n;
\item There exists positive constants $m_2$ and $m_2^{'}$ such that for n sufficiently large $m_2k_n^{-1}\leq\ep[\lambda_{min}(\W(\z_i)\W(\z_i)^{'})] \leq\ep[\lambda_{max}(\W(\z_i)\W(\z_i)^{'})]\leq m_2^{'}k_n^{-1} $;
\item There exists positive constant $m_3$ such that for n sufficiently large $\ep||W_B^{-1}||  \leq  m_3\sqrt{k_nn^{-1}}$;
 \item $\max_{i} ||\tilde{\W}(\z_i)|| = O_p(\sqrt{k_n/n})$
\en
\end{lemma}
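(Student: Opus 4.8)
The plan is to treat the four assertions in turn, relying throughout on three standard facts about a normalized B-spline basis $\{b_j\}$ of order $l+1$ with $k_n$ quasi-uniform knots on $[0,1]$ (under the standing assumption that each $z_{ik}$ has a density bounded away from $0$ and $\infty$): local support, so that for each fixed $t$ at most $l+1$ of the $b_j(t)$ are nonzero; the normalization $0\le b_j\le 1$ with $\sum_j b_j\equiv 1$; and the single-component norm equivalence $c\,k_n^{-1}\|\mathbf a\|^2\le \ep[(\mathbf a'\w(z_{ik}))^2]\le C\,k_n^{-1}\|\mathbf a\|^2$, the fundamental spline fact of \cite{Sto85} and \cite{Sch07}. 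I first record that the centering constants $c_j=\ep[b_{j+1}]/\ep[b_1]$ are uniformly of order one, since $\ep[b_{j+1}]$ and $\ep[b_1]$ are each of exact order $k_n^{-1}$. I also read the eigenvalues in (2) as those of the second-moment matrix $G:=\ep[\W(\z_i)\W(\z_i)']$, since a single outer product has $\lambda_{\min}=0$. For (1), I would bound $\ep\|\W(\z_i)\|^2=k_n^{-1}+\sum_{k=1}^d\sum_{j=1}^{k_n+l}\ep[B_j(z_{ik})^2]$: expanding $B_j=b_{j+1}-c_jb_1$ and using local support (so $\ep[b_{j+1}b_1]\ne0$ for only $O(1)$ indices $j$) gives $\ep[B_j^2]\le C k_n^{-1}$ uniformly, whence $\ep\|\W(\z_i)\|^2=O(1)$ and $\ep\|\W(\z_i)\|\le(\ep\|\W(\z_i)\|^2)^{1/2}\le m_1$ by Jensen. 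The point to note is that although $\|\W(\z)\|^2$ is pointwise as large as $O(k_n)$ near the left boundary (where $b_1\ne0$ activates all the $c_jb_1$ terms), this region has probability $O(k_n^{-1})$ and so contributes only $O(1)$ in expectation.

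For (2) the upper bound is immediate from the single-component fact and the fixed number $d$ of additive blocks: writing $\mathbf a=(a_0,\mathbf a_1',\dots,\mathbf a_d')'$, one has $\mathbf a'G\mathbf a=\ep[(a_0k_n^{-1/2}+\sum_k\mathbf a_k'\w(z_{ik}))^2]\le C k_n^{-1}\|\mathbf a\|^2$ after bounding the cross terms by Cauchy--Schwarz. The lower bound is the substantive direction: I would use that the joint density of $\z$ is bounded below, together with the centering identities $\ep[B_j]=0$, to show the constant block and the $d$ spline blocks are jointly non-collinear, reducing $\mathbf a'G\mathbf a\ge c k_n^{-1}\|\mathbf a\|^2$ to the single-component lower bound; this is the additive-spline stability argument of \cite{Sto85} and \cite{SaW16}.

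Parts (3) and (4) both rest on a concentration step. Since $w_i=\ep[\varphi_\alpha(\epsilon_i)\mid\x_i,\z_i]$ is bounded away from $0$ and $\infty$ by Lemma \ref{Prop of Loss}, the matrix $G_w:=\ep[w_i\W(\z_i)\W(\z_i)']$ also satisfies $\lambda_{\min}(G_w)\ge c\,k_n^{-1}$, and a matrix Bernstein bound (using the per-term operator-norm bound $w_i\|\W(\z_i)\|^2=O(k_n)$) gives $\|n^{-1}W'B_nW-G_w\|=o_p(k_n^{-1})$ under $k_n\asymp n^{1/(2r+1)}$. Hence on an event $\mathcal A_n$ with $\P(\mathcal A_n)\to1$ we have $\lambda_{\min}(n^{-1}W'B_nW)\ge \tfrac12 c\,k_n^{-1}$, i.e. $\|W_B^{-1}\|\le (2/c)^{1/2}\sqrt{k_n/n}$. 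For the expectation claimed in (3) I would split over $\mathcal A_n$ and $\mathcal A_n^c$, bounding the first piece by $(2/c)^{1/2}\sqrt{k_n/n}$ and the second by Cauchy--Schwarz, using a crude polynomial lower bound on $\lambda_{\min}(W'B_nW)$ (valid once $W$ has full column rank a.s.) against the exponentially small $\P(\mathcal A_n^c)$.

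The main obstacle is (4), where the naive bound $\|\tilde\W(\z_i)\|\le\|W_B^{-1}\|\,\|\W(\z_i)\|$ is too weak: it yields only $O_p(k_n/\sqrt n)$, because $\max_i\|\W(\z_i)\|\asymp\sqrt{k_n}$ at the boundary observations. The resolution is a leverage / reproducing-kernel argument. On $\mathcal A_n$, $\|\tilde\W(\z_i)\|^2=\W(\z_i)'(W'B_nW)^{-1}\W(\z_i)\le 2n^{-1}\W(\z_i)'G_w^{-1}\W(\z_i)$. Writing $M_i=\W(\z_i)'G_w^{-1}\W(\z_i)$ and $g_i(\z)=\W(\z)'G_w^{-1}\W(\z_i)$, one has $\ep[w\,g_i(\z)^2]=M_i$ and $g_i(\z_i)=M_i$; the spline inverse (Nikolskii) inequality $\|s\|_\infty\le C\sqrt{k_n}\,\|s\|_{L_2}$, valid for every element $s$ of the additive spline space by local support and norm equivalence on the polynomial pieces, then gives $M_i=g_i(\z_i)\le\|g_i\|_\infty\le C\sqrt{k_n}\,M_i^{1/2}$, so $M_i\le C^2 k_n$. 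Taking the maximum over $i$ yields $\max_i\|\tilde\W(\z_i)\|^2\le 2C^2k_n/n$, i.e. $O_p(\sqrt{k_n/n})$. The reason this succeeds is exactly that the $O(\sqrt{k_n})$ blow-up of $\W(\z_i)$ lives in the boundary direction along which $G_w$ has an eigenvalue of order one rather than $k_n^{-1}$, so $G_w^{-1}$ rescales it back; the reproducing-kernel bound encodes this alignment without tracking eigenvectors. This self-bounding step is the crux of the lemma.
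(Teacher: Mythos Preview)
Your proposal is correct and is considerably more careful than the paper's own argument. The paper disposes of (1) and (2) by citation to \cite{SaW16}; for (3) it writes the one-line chain $\lambda_{\min}(W'B_nW)\ge C\sum_i\lambda_{\min}(\W(\z_i)\W(\z_i)')\ge Cnk_n^{-1}$, literally invoking part (2) on rank-one outer products---which, as you observed, have $\lambda_{\min}=0$, so the chain is vacuous unless (2) is read (as you do) as a statement about the second-moment matrix $G$ and then paired with a concentration step. For (4) the paper uses exactly the naive bound you flagged, $\|\tilde\W(\z_i)\|^2\le\|\W(\z_i)\|^2\cdot\lambda_{\max}^2(W_B^{-1})=O_p(k_n/n)$, implicitly treating $\|\W(\z_i)\|^2$ as $O_p(1)$ on the strength of part (1). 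But (1) is only an expectation bound, and your observation about the centered basis---that $\max_i\|\W(\z_i)\|^2\asymp k_n$ once some observation lands in the support of $b_1$---shows this step does not deliver the uniform-in-$i$ rate that the later lemmas genuinely need (e.g.\ the event $F_{n1}=\{\max_i\|\tilde s_i\|\le\alpha_1\sqrt{d_n/n}\}$ in the proof of Lemma~\ref{ShL4}). Your leverage/Nikolskii self-bounding argument $M_i=g_i(\z_i)\le C\sqrt{k_n}\,M_i^{1/2}$ is the honest fix, and it works precisely because the boundary direction in which $\|\W(\z_i)\|$ blows up is the direction where $G_w^{-1}$ has a \emph{small} eigenvalue. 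In short: the paper's proof is a two-line sketch that is only valid if one silently replaces the pointwise outer product by its expectation and ignores the boundary behaviour of the centering; your route is longer but actually closes both gaps.
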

\begin{proof}
The proof for (1) and (2) can be found in \cite{SaW16}. As for (3), we only need to show that $\ep[\lambda_{min}(W_B^2)] > Cnk_n^{-1}$, because $||W_B^{-1}|| = \lambda_{max}(W_B^{-1}) =\lambda_{min}^{-1/2}(W_B^{2})$ . Through the definition of $W_B$, we have
\by
\lambda_{min}(W_B^2) &=& \lambda_{min}(W^{'}B_nW) \\
&=& \lambda_{min}(\sumn \ep[\varphi_\alpha(\epsilon_i)]\W(\z_i)\W(\z_i)^{'})\\
&\geq& C\sumn  \lambda_{min}(\W(\z_i)\W(\z_i)^{'})\\
&\geq&C k_n^{-1}n,
\ey
where the last inequality follows from the second result in this lemma.

As for (4), we have
\by
||\tilde{\W}(\z_i)||^2 = \W(\z_i)^{'}W_B^{-2}\W(\z_i)  \leq ||\W(\z_i)||^2 \cdot \lambda_{max}^2(W_B^{-1}) = O_p(\frac{k_n}{n}).
\ey
And this finishes the proof.\qed
\end{proof}

\begin{lemma}[Properties of Design Matrix]\label{Prop of Design}
We show some properties for after-projection new design matrix $X^*$:
\bn[(1)]
\item $\lambda_{max} (n^{-1}X^{*'}X^*) \leq C$ with probability one, where C is a positive constant;
\item $n^{-1/2}X^* = n^{-1/2}\Delta_n^* + o_p(1)$. Also, $n^{-1}X^{*'}B_nX^* = W_n + o_p(1)$;
\item $\sumn w_i\tilde{\x}_i\tilde{\W}(\z_i)^{'} = \mathbf{0}.$
\en
\end{lemma}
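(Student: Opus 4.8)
The plan is to exploit the single structural fact that $P = W(W'B_nW)^{-1}W'B_n$ is the orthogonal projection onto the column space of $W$ with respect to the weighted inner product $\langle a,b\rangle_{B_n} = a'B_n b$. First I would record the algebraic identities that carry the argument, each verified by direct substitution: $P^2 = P$, $PW = W$, $W'B_n(I_n-P) = \mathbf{0}$, and $P'B_n = B_n P$ (so $P$, hence $I_n-P$, is self-adjoint for $\langle\cdot,\cdot\rangle_{B_n}$). By Lemma \ref{Prop of Loss}(3) the weights obey $0 < c_* \leq w_i \leq 2c_2$, so $c_* I_n \preceq B_n \preceq 2c_2 I_n$ and the norms $\|\cdot\|_{B_n}$ and $\|\cdot\|$ are equivalent up to constants; in particular $I_n-P$ is a $B_n$-orthogonal projection, hence a $\langle\cdot,\cdot\rangle_{B_n}$-contraction.

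Part (3) is then immediate. Writing the sum in matrix form, $\sumn w_i\tilde{\x}_i\tilde{\W}(\z_i)' = n^{-1/2}\big(X^{*'}B_n W\big)W_B^{-1}$, and $X^{*'}B_n W = X_A'(I_n-P)'B_n W = X_A'B_n(I_n-P)W = X_A'B_n(W - PW) = \mathbf{0}$ using $P'B_n = B_n P$ and $PW = W$. For part (1), for any unit $u\in\R^{q_n}$ the contraction property gives $\|(I_n-P)X_A u\|^2 \leq c_*^{-1}\|(I_n-P)X_A u\|_{B_n}^2 \leq c_*^{-1}\|X_A u\|_{B_n}^2 \leq (2c_2/c_*)\|X_A u\|^2$, and Condition \ref{C(2)} yields $\|X_A u\|^2 \leq C_2 n$, so $\lambda_{max}(n^{-1}X^{*'}X^*) \leq 2c_2 C_2/c_*$ with probability one.

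Part (2) is the substantive step. Since each $h^*_j \in \mathcal{G}\cap\mathcal{H}_r$, there is a coefficient matrix $\Gamma^*\in\R^{J_n\times q_n}$ with $\sup_z|h^*_j(z) - \W(z)'\gamma^*_j| = O(k_n^{-r})$, so $H = W\Gamma^* + R$ with $|R_{ij}| \leq Ck_n^{-r}$ and $\|R\| \leq \sqrt{\mathrm{tr}(R'R)} = O(\sqrt{nq_n}\,k_n^{-r})$. Because $(I_n-P)W = \mathbf{0}$, we get $(I_n-P)H = (I_n-P)R =: R^*$ with $\|R^*\| \leq C\|R\|$ by the contraction bound. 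Writing $\Delta_n^* = (I_n-P)\Delta_n$, the first display reduces to $n^{-1/2}(X^* - \Delta_n^*) = n^{-1/2}(I_n-P)H = n^{-1/2}R^*$, whose spectral norm is $O(\sqrt{q_n}\,k_n^{-r}) = o(1)$ under Conditions \ref{C(3)}--\ref{C(4)} (since $C_3 < 1/2 < 2r/(2r+1)$). For the second display I would use self-adjointness and idempotency to reduce $X^{*'}B_n X^* = X_A'B_n(I_n-P)X_A$, then substitute $X_A = W\Gamma^* + R + \Delta_n$; the identity $W'B_n(I_n-P) = \mathbf{0}$ annihilates every $W\Gamma^*$ factor, leaving $X^{*'}B_n X^* = \Delta_n'B_n\Delta_n - \Delta_n'B_n P\Delta_n + (\text{$R$-terms})$, where $\Delta_n'B_n\Delta_n = \sumn w_i\delta_i\delta_i' = nW_n$ and the $R$-terms are $o(n)$ in spectral norm by the bounds above.

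The hard part will be showing $n^{-1}\Delta_n'B_n P\Delta_n = o_p(1)$. Setting $M = W'B_n\Delta_n$, this term equals $M'W_B^{-2}M$ with $\|M'W_B^{-2}M\| \leq \lambda_{min}^{-1}(W_B^2)\|M\|^2$, and Lemma \ref{Prop of Spline} supplies $\lambda_{min}(W_B^2) \geq C n k_n^{-1}$. The key observation is that the first-order condition defining the weighted projection $h^*_j$ forces $\ep[M] = \mathbf{0}$: each (centered) basis coordinate lies in $\mathcal{G}\cap\mathcal{H}_r$, the space to which $\delta_{ij}$ is orthogonal in the $w_i$-weighted $L_2$ metric. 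Since the summands of $M$ are independent across $i$ with mean zero, a second-moment computation using the boundedness of $w_i$, Condition \ref{C(2)} ($\ep\delta_{ij}^4 \leq M_2$), and the spline moment bounds of Lemma \ref{Prop of Spline} gives $\ep\|M\|^2 = O(nq_n)$, hence $\|M\| = O_p(\sqrt{nq_n})$. Combining, $n^{-1}\|\Delta_n'B_n P\Delta_n\| = O_p(k_n q_n/n) = o_p(1)$ under Conditions \ref{C(3)}--\ref{C(4)}, which together with the preceding estimates yields $n^{-1}X^{*'}B_n X^* = W_n + o_p(1)$ and completes the proof.
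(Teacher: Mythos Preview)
Your proof is correct and, for parts (1) and (3), essentially matches what the paper does (or what is implicit in its deferral to \cite{SaW16}): both rely on the $B_n$-self-adjointness identities $P'B_n=B_nP$, $PW=W$, $W'B_n(I-P)=\mathbf 0$ together with the two-sided bound on the weights from Lemma~\ref{Prop of Loss}(3).

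For part (2) you take a genuinely different route. The paper writes $X^*-\Delta_n = H-PX_A$, recognizes $PX_A$ as the weighted B-spline least-squares fit of $X_A$, and then invokes \cite{Sto85} directly to get $n^{-1}\mathrm{tr}\big((H-PX_A)'(H-PX_A)\big)=O_p\big(q_n n^{-2r/(2r+1)}\big)=o_p(1)$; everything else is deferred to Lemma~3 of \cite{SaW16}. You instead split $H-PX_A=(I-P)H-P\Delta_n$ and bound the two pieces separately: the deterministic spline approximation $H=W\Gamma^*+R$ handles $(I-P)H$, and the orthogonality $\ep\big[w_i\,W_l(\z_i)\,\delta_{ij}\big]=0$ (so that $M=W'B_n\Delta_n$ is a sum of mean-zero independent terms) handles $P\Delta_n$. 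This is effectively a hand-rolled bias--variance decomposition of the Stone rate, and has the advantage of being entirely self-contained; the paper's version is shorter because it treats the nonparametric convergence as a black box. One small point to tighten: your mean-zero claim for $M$ uses that each centered basis coordinate $B_j(\cdot)$ (and the constant $k_n^{-1/2}$) lies in $\mathcal G\cap\mathcal H_r$, which requires the spline order $l+1$ to be at least as large as the smoothness $r$; this is implicit in the paper's use of the approximation rate $O(k_n^{-r})$ but is worth stating explicitly when you invoke the first-order condition for $h_j^*$.
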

\begin{proof}
  First, we show that $n^{-1/2}||H-PX_A||=o_p(1)$. Notice that $X^*$ is a weighted  projection from design matrix $X_A$ to the space conducted by spline basis matrix $W$'s column vectors. In other words, we set $\nu_j \in \R^{J_n}$ as $\nu_j = \underset{\nu \in \R^{J_n}}{\arg\min} \sumn \ep[\varphi_\alpha(\epsilon_i)](X_{Aij} - \W(\z_i)^{'}\nu)^2$. Thus, according to the result of weighted least square regression, we have $\{\tilde{h}_j(\z_i)\}_{n\times q_n} ={W(W^{'}B_nW)^{-1}W^{'}B_nX_A}$, where $ \tilde{h}_j(\z_i) = \W(\z_i)^{'}\nu_j$.

Then, by the definition of $X^*$, we have
\by\frac{1}{\sqrt{n}} X^* =   \frac{1}{\sqrt{n}} (I-P)X_A = \frac{1}{\sqrt{n}}\Delta_n + \frac{1}{\sqrt{n}}(H-PX_A).
\ey
and
\by
\frac{1}{n}\lambda_{max}\big((H-PX_A)^{'}(H-PX_A)\big)
 &\leq& \frac{1}{n}trace\big((H-PX_A)^{'}(H-PX_A)\big)\\
 &=&  \frac{1}{n}\sumn \sum^{q_n}_{j =1} (\tilde{h}_j(\z_i)-\hat{h}_j(\z_i))^2\\
& =&O_p(q_nn^{-2r/(2r+1)}) = o_p(1),
\ey
where the second last equality follows from \cite{Sto85}. Then the following proof is similar to Lemma 3 in \cite{SaW16}.\qed
\end{proof}

\begin{lemma}[Bernstein Inequality]\label{Bernstein inequality}
	Let $\xi_1,\xi_2,\ldots,\xi_n$ be independent mean-zero random variables, with uniform bounds $|\xi_i| \leq M $ and $v \geq \var(\sumn \xi_i)$. Then for every positive t,
    \by
    \P\left(|\sum_{i=1}^{n}\xi_i|>t \right) \leq 2\exp\left(-\frac{t^2}{2(v + Mt/3) }\right).
    \ey
	
\end{lemma}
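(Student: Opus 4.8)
The plan is to prove the one-sided tail bound $\P(\sumn \xi_i > t) \leq \exp\big(-t^2/(2(v+Mt/3))\big)$ by the classical Chernoff (exponential Markov) device, and then recover the stated two-sided inequality by symmetry. First I would fix an auxiliary parameter $\lambda > 0$ and apply Markov's inequality to the increasing map $x\mapsto e^{\lambda x}$, using independence to factorize the exponential of the sum:
\bqs
\P\left(\sumn \xi_i > t\right) \leq e^{-\lambda t}\,\ep\left[e^{\lambda \sumn \xi_i}\right] = e^{-\lambda t}\prod_{i=1}^{n}\ep\left[e^{\lambda \xi_i}\right].
\eqs
This reduces the whole problem to a uniform control of the individual moment generating functions $\ep[e^{\lambda \xi_i}]$, after which the freedom in $\lambda$ is exploited by optimization.

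The core estimate is the bound on each $\ep[e^{\lambda \xi_i}]$. Expanding the exponential in a power series and using the mean-zero hypothesis $\ep[\xi_i]=0$ to annihilate the linear term gives
\bqs
\ep\left[e^{\lambda \xi_i}\right] = 1 + \sum_{k=2}^{\infty}\frac{\lambda^k\,\ep[\xi_i^k]}{k!}.
\eqs
I would then invoke the boundedness $|\xi_i|\leq M$, which yields $|\ep[\xi_i^k]| \leq M^{k-2}\,\ep[\xi_i^2]$ for every $k\geq 2$, together with the elementary factorial inequality $k!\geq 2\cdot 3^{k-2}$ (valid for all $k\geq 2$). Summing the resulting geometric series for $\lambda M < 3$ and applying $1+x\leq e^x$ produces
\bqs
\ep\left[e^{\lambda \xi_i}\right] \leq 1 + \frac{\lambda^2\,\ep[\xi_i^2]}{2(1-\lambda M/3)} \leq \exp\left(\frac{\lambda^2\,\ep[\xi_i^2]}{2(1-\lambda M/3)}\right).
\eqs

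Multiplying these bounds across $i$ and using $\sumn \ep[\xi_i^2] = \var(\sumn \xi_i) \leq v$ (the variance of the sum is the sum of the individual variances by independence and mean-zeroness) leads to
\bqs
\P\left(\sumn \xi_i > t\right) \leq \exp\left(-\lambda t + \frac{\lambda^2 v}{2(1-\lambda M/3)}\right), \qquad 0 < \lambda < 3/M.
\eqs
Choosing $\lambda = t/(v+Mt/3)$, which indeed lies in $(0,3/M)$, makes $1-\lambda M/3 = v/(v+Mt/3)$, and a short cancellation collapses the exponent exactly to $-t^2/(2(v+Mt/3))$. Running the identical argument on $-\xi_1,\ldots,-\xi_n$ (again independent, mean-zero, bounded by $M$, with the same variance) bounds the lower tail, and adding the two one-sided probabilities supplies the factor $2$.

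The only genuinely delicate point is the moment-generating-function step: one must retain $\ep[\xi_i^2]$ rather than a crude $M^2$ as the leading coefficient, so that the final bound interpolates correctly between the sub-Gaussian regime (small $t$, where $v$ dominates and the exponent is $\approx -t^2/(2v)$) and the sub-exponential regime (large $t$, where $Mt/3$ dominates). The factorial inequality $k!\geq 2\cdot 3^{k-2}$ is precisely what pins down the constant $1/3$ in the statement, and the explicit minimizer $\lambda=t/(v+Mt/3)$ is what yields the clean closed form. Everything else—the Markov bound, the factorization, the variance identity, and the symmetrization—is routine.
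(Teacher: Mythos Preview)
Your argument is correct and complete: the Chernoff bound, the factorial estimate $k!\geq 2\cdot 3^{k-2}$ combined with $|\ep[\xi_i^k]|\leq M^{k-2}\ep[\xi_i^2]$, the geometric summation, the choice $\lambda=t/(v+Mt/3)$, and the symmetrization are all carried out cleanly, and the verification that this $\lambda$ lies in $(0,3/M)$ and collapses the exponent to the stated form is accurate.

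The paper, however, does not prove this lemma at all; it simply cites Lemma~2.2.9 of van~der~Vaart and Wellner (1996). So you have supplied a self-contained proof of a classical result that the authors treat as a black box. Your approach is in fact the standard one found in that reference and most textbooks, so there is no substantive methodological difference---you have just written out what the citation points to.
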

\proof Details are in Lemma 2.2.9 of \cite{vaW96}.\qed

\begin{lemma} \label{ShL4}
If Conditions 3.1-3.4  hold, then we have
\by
\frac{1}{n}\sumn(\tilde{g}(z_i)-g_0(z_i))^2=o_p(\frac{d_n}{n}),
\ey
where $d_n=q_n+k_n$.
\end{lemma}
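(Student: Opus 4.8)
The plan is to argue in the centered-basis coordinates $(\hat{\btheta}_1,\hat{\btheta}_2)$ introduced just before Lemma~\ref{Prop of Loss}, in which $\tilde{g}(\z_i)=\W(\z_i)'\hat{\bgamma}$ and $u_{ni}=\W(\z_i)'\bgamma_0-g_0(\z_i)$ is the deterministic spline bias. The first step is to record the pointwise identity
\[
\tilde{g}(\z_i)-g_0(\z_i)=\tilde{\W}(\z_i)'\hat{\btheta}_2-n^{-1/2}(PX_A)_i\,\hat{\btheta}_1+u_{ni},
\]
obtained by inserting the definitions of $\btheta_1,\btheta_2$ and using $P=W(W'B_nW)^{-1}W'B_n$. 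Squaring, averaging over $i$ and applying $(a+b+c)^2\le 3(a^2+b^2+c^2)$ reduces the lemma to showing that each of $T_1=\frac1n\sumn(\tilde{\W}(\z_i)'\hat{\btheta}_2)^2$, $T_2=n^{-2}\|PX_A\hat{\btheta}_1\|^2$ and $T_3=\frac1n\sumn u_{ni}^2$ is $o_p(d_n/n)$.

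For the bias term $T_3$, Condition~\ref{C(3)} gives $\sup_{\z}|u_{ni}|=O(k_n^{-r})$, so $T_3=O(k_n^{-2r})=O(k_n/n)$ under $k_n\asymp n^{1/(2r+1)}$. For the nonparametric stochastic term $T_1$, I would use the exact identity $\sumn w_i\tilde{\W}(\z_i)\tilde{\W}(\z_i)'=W_B^{-1}(W'B_nW)W_B^{-1}=I_{J_n}$ together with the $w_i$ being bounded away from zero (Lemma~\ref{Prop of Loss}(3)) to obtain $T_1\le c_1^{-1}n^{-1}\|\hat{\btheta}_2\|^2$. The block orthogonality $\sumn w_i\tilde{\x}_i\tilde{\W}(\z_i)'=\mathbf{0}$ of Lemma~\ref{Prop of Design}(3) decouples the two coordinate blocks in the estimating equations, from which $\|\hat{\btheta}_2\|^2=O_p(J_n)=O_p(k_n)$ and hence $T_1=O_p(k_n/n)$. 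Thus $T_1$ and $T_3$ are both of order $k_n/n$, which is $o_p(d_n/n)$ precisely when $k_n=o(d_n)$, i.e.\ when the parametric complexity $q_n$ dominates the spline dimension.

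The decisive term is the coupling piece $T_2$. Using the proof of Lemma~\ref{Prop of Design} (the weighted projection recovers the smooth part, $n^{-1/2}\|H-PX_A\|=o_p(1)$), Condition~\ref{C(2)} (so that $\lambda_{max}(n^{-1}HH')$ is bounded), and $\|\hat{\btheta}_1\|^2=n\|\hat{\bbeta}^*_A-\bbeta_A\|^2=O_p(q_n)$ from Theorem~\ref{oracle estimator asymptotic property}, the operator-norm bound yields only $T_2\le Cn^{-1}\|\hat{\btheta}_1\|^2=O_p(q_n/n)$. This is the crux: crude bounds give $T_1+T_3\asymp k_n/n$ and $T_2\asymp q_n/n$, i.e.\ $O_p(d_n/n)$ overall, and the two requirements $k_n=o(d_n)$ and $q_n=o(d_n)$ cannot hold simultaneously, so the strict $o_p(d_n/n)$ must come from genuine cancellation in whichever block dominates, not from norms. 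The main obstacle, which I expect to absorb the bulk of the work, is to replace the worst-case eigenvalue bound on $\hat{\btheta}_1'(n^{-1}X_A'P'PX_A)\hat{\btheta}_1$ by a sharp evaluation: since $PX_A\hat{\btheta}_1\approx H\hat{\btheta}_1$ lies in the low-dimensional spline-approximable space, one should linearise the profiled score for $\btheta_1$ and show that its projection onto that space contributes strictly below $d_n/n$. Controlling $T_1$ and $T_3$ is then routine spline and eigenvalue bookkeeping by comparison.
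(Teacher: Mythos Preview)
Your proposal has two structural problems that prevent it from going through.

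\textbf{Circularity.} You invoke Theorem~\ref{oracle estimator asymptotic property} to obtain $\|\hat{\btheta}_1\|^2=O_p(q_n)$ for the term $T_2$. But in the paper's architecture, the first conclusion of Theorem~\ref{oracle estimator asymptotic property} is proved via Lemma~\ref{ShL6}, and Lemma~\ref{ShL6} (together with Lemmas~\ref{L3.21} and~\ref{L3.22}) is stated on the event $\|\btheta_2\|\le C\sqrt{d_n}$, which is precisely what the present lemma supplies. So you are assuming what has yet to be established. The paper avoids this by making Lemma~\ref{ShL4} \emph{self-contained}: it runs a direct ``bowl-shaped'' argument on the reparametrised objective, showing that for $\|\btheta\|=L\sqrt{d_n}$ the increment $\frac{1}{d_n}\sumn(\Phi_i(\sqrt{d_n})-\Phi_i(0))$ is positive with high probability. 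This is done by a Taylor expansion into $\Sigma_1+\Sigma_2+\Sigma_3$, where the conditional-mean piece $\Sigma_1\asymp\|\btheta\|^2$ dominates the stochastic linear piece $\Sigma_2=O_p(\|\btheta\|)$, and the remainder $\Sigma_3=o_p(1)$ is handled by Bernstein's inequality. This yields $\|\hat{\btheta}\|=O_p(\sqrt{d_n})$ with no appeal to Theorem~\ref{oracle estimator asymptotic property}.

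\textbf{Chasing a rate the paper does not claim.} You correctly observe that your decomposition gives only $O_p(d_n/n)$, and then embark on a search for ``genuine cancellation'' to reach the stated $o_p(d_n/n)$. In fact the paper's own proof concludes with $O_p(d_n/n)$, matching Theorem~\ref{oracle estimator asymptotic property}(3.4), so the $o_p$ in the lemma statement is a typographical slip. Your proposed sharpening of $T_2$ via the projection structure, and your unjustified claim $\|\hat{\btheta}_2\|^2=O_p(k_n)$ (block orthogonality of the weighted score does not by itself deliver a $k_n$ rate for $\hat{\btheta}_2$), are attempts to solve a problem that is not there. Once you accept the $O_p$ target and adopt the direct convexity argument on $\btheta=(\btheta_1',\btheta_2')'$, the result follows from $\|W_B(\hat{\bgamma}-\bgamma_0)\|=O_p(\sqrt{d_n})$ together with the spline bias $\sup_i|u_{ni}|=O(k_n^{-r})$.
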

\begin{proof}
Define
\by
\Phi_i(a_n) &\triangleq& \Phi_i(a_n\theta_1,a_n\theta_2)= \phi_\alpha(\epsilon_i - a_n\tilde{\x}^{'}_i\theta_1-a_n\tilde{W}(z_i)^{'}\theta_2-u_{ni})
\ey
where $a_n$ refers to a sequence of positive numbers. And we notify
\by\ep_s[\cdot] &\triangleq& \ep[\cdot|x_i,z_i].\ey
Here we first show that $\forall \eta > 0$, there exists an $L>0$ such that
\bq\label{EQL4}
\P\Big(\inf_{||\theta||=L}\frac{1}{d_n}\sumn\big( \Phi_i(\sqrt{d_n}) - \Phi_i(0) \big)> 0\Big) > 1-\eta,
\eq
which implies with probability at least $1-\eta$ that there exists a local minimizer in the ball $\{ \sqrt{d_n}\theta:||\theta||\leq L\}$, that means the local minimizer $\hat{\theta}$ satisfies $||\hat{\theta}|| = O_p(\sqrt{d_n}).$

Set $\theta = (\theta^{'}_1,\theta^{'}_2)^{'}$. By Taylor Expansion, we have
\by
\Phi_i(a_n) &= &\phi_\alpha(\epsilon_i - a_n\tilde{s}^{'}_i\theta-u_{ni})\\
&=& \phi_\alpha(\epsilon_i - u_{ni}) - \psi_\alpha(\epsilon_i - u_{ni}) a_n\tilde{s}^{'}_i\theta + r_{i}(a_n)
\ey
and there exists $0<\xi_i<1$ s.t. \by r_{i}(a_n) =\frac{1}{2} \varphi_\alpha(\epsilon_i - u_{ni}-\xi_i a_n\tilde{s}^{'}_i\theta) (a_n\tilde{s}^{'}_i\theta)^2.\ey
Notice that apply Lemma \ref{Prop of Loss}, then
\by
|\psi_\alpha(\epsilon_i - u_{ni})-\psi_\alpha(\epsilon_i )| \leq 2c_2|u_{ni}| = O_p(k_n^{-r}) =o_p(1).
\ey

Thus, we have
\by
&&\frac{1}{d_n}\sumn \Phi_i(\sqrt{d_n}) - \Phi_i(0)\\
 &=&\frac{1}{d_n}\sumn \ep_s[\Phi_i(\sqrt{d_n}) - \Phi_i(0)]
 +\frac{1}{d_n}\sumn \Big(\Phi_i(\sqrt{d_n}) - \Phi_i(0) - \ep_s[\Phi_i(\sqrt{d_n}) - \Phi_i(0)]\Big)\\
 &=&\frac{1}{d_n}\sumn \ep_s[\Phi_i(\sqrt{d_n}) - \Phi_i(0)]\\
   &&+  \frac{1}{d_n}\sumn \Big(- \psi_\alpha(\epsilon_i )(1+o_p(1)) \sqrt{d_n}\tilde{s}^{'}_i\theta
+ \ep_s[ - \psi_\alpha(\epsilon_i )(1+o_p(1)) \sqrt{d_n}\tilde{s}^{'}_i\theta] \Big)\\
&&+   \frac{1}{d_n}\sumn \Big(r_{i}(\sqrt{d_n}) - \ep_s[ r_{i}(\sqrt{d_n})] \Big)\\
 &= &\frac{1}{d_n}\sumn \ep_s[\Phi_i(\sqrt{d_n}) - \Phi_i(0)]
- \frac{1}{\sqrt{d_n}}\sumn \psi_\alpha(\epsilon_i )(1+o_p(1)) \tilde{s}^{'}_i\theta\\
&&+\frac{1}{d_n}\sumn r_{i}(\sqrt{d_n}) - \ep_s[ r_{i}(\sqrt{d_n})] \\
&=& \Sigma_1 +\Sigma_2 +\Sigma_3
\ey
For $\Sigma_3$, define $D_i(a_n) = r_{i}(a_n) - \ep_s[ r_{i}(a_n)]$, we are going to show
\by
\underset{||\theta||\leq L}{\sup} ~d_n^{-1}\sumn |D_i(\sqrt{d_n})| = o_p(1).
\ey
Notify $F_{n1}$ as the event $\{\max_i||\tilde{s}_i|| \leq \alpha_1\sqrt{d_n/n }\}$. By {Lemma \ref{Prop of Spline}} and the fact $\max_i||\tilde{\x}_i||  = O(\sqrt{q_n/n })$, it implies that $\P(F_{n1})\rightarrow 1$. And for some positive constant $\alpha_2$ set $F_{n2}$ as the event $\{\max_i|u_{ni}|\leq\alpha_2k_n^{-r} \}$, then $\P(F_{n2})\rightarrow 1$ according to \cite{Sch07}.

Simplify $D_i(\sqrt{d_n})$ as $D_i$. Notice that $D_i$ are independent random variables satisfying $\ep D_i = 0$ and \by\max_i|D_i|\I(F_1\bigcap F_2) \leq \max_i C d_n|\tilde{s}^{'}_i\theta|^2 \I(F_1\bigcap F_2)\leq CL\frac{d_n^2}{n}.\ey
Thus,
\by
\var(D_i \I(F_{n1}\bigcap F_{n2})|x_i,z_i) &\leq& \ep_s[r_{i}^2(\sqrt{d_n})\I(F_{n1}\bigcap F_{n2})]\\
&\leq& \ep_s[\frac{1}{2} \varphi_\alpha(\epsilon_i - u_{ni}-\xi_i a_n\tilde{s}^{'}_i\theta) (a_n\tilde{s}^{'}_i\theta)^2]^2\\
& \leq& C d_n^2||\tilde{s}_i^{'}||^4||\theta||^4 \leq Cd_n^4/n^2.
\ey

We have
\by
\sumn \var(D_i \I(F_{n1}\bigcap F_{n2})|x_i,z_i) \leq Cd_n^4/n.
\ey
Applying Bernstein Inequality in Lemma \ref{Bernstein inequality}, for any positive constant $\epsilon$,
\by
\P(|\sumn D_i|> d_n\epsilon, F_{n1}\bigcap F_{n2}|x_i,z_i)
&\leq& 2\exp(\frac{-d_n^2\epsilon^2}{2(C\frac{d_n^4}{n}+C\frac{d_n^3L\epsilon}{3n})})\\
&\leq& C\exp(-\frac{n}{d_n^2})\rightarrow 0,
\ey
which implies that $\underset{||\theta||\leq L}{\sup} ~d_n^{-1}\sumn |D_i(\sqrt{d_n})| = o_p(1).$

Consider $\Sigma_1$, we can show that
$ |\varphi_\alpha(\epsilon_i - u_{ni}) - \varphi_\alpha(\epsilon_i )|=o_p(1)$.
It is sufficient to show that for any $\epsilon > 0$, $\P(|\varphi_\alpha(\epsilon_i - u_{ni}) - \varphi_\alpha(\epsilon_i )  | >\epsilon) \rightarrow 0$.  Notice that
\by
\P(|\varphi_\alpha(\epsilon_i - u_{ni}) - \varphi_\alpha(\epsilon_i )  | >\epsilon)
  &=& \max \{ \P(u_{ni}<\epsilon_i<0),\P(0<\epsilon_i<u_{ni}) \}\\
&\leq& C\cdot |u_{ni}| = O_p(k_n^{-r})\to 0
\ey
which implies
$|\varphi_\alpha(\epsilon_i - u_{ni}) - \varphi_\alpha(\epsilon_i )  | = o_p(1)$.

Then, through Taylor Expansion, it follows that
\by
&&\frac{1}{d_n}\sumn\ep_s\big[\Phi_i(\sqrt{d_n})- \Phi_i(0))\big]\\
&=&\frac{1}{d_n}\sumn\ep_s\big[\phi_\alpha(\epsilon_i - u_{ni}-\sqrt{d_n}\tilde{s}^{'}_i\theta) - \phi_\alpha(\epsilon_i - u_{ni})\big]\\
&=&\frac{1}{d_n}\sumn\ep_s\Big[- \psi_\alpha(\epsilon_i - u_{ni}) \sqrt{d_n}\tilde{s}^{'}_i\theta+\frac{1}{2} \varphi_\alpha(\epsilon_i - u_{ni}) {d_n}(\tilde{s}^{'}_i\theta)^2(1+o(1))\Big]\\
&=& \frac{1}{d_n}\sumn\ep_s\Big[- \psi_\alpha(\epsilon_i ) (1+o_p(1)) \sqrt{d_n}\tilde{s}^{'}_i\theta+\frac{1}{2} \varphi_\alpha(\epsilon_i ) (1+o_p(1)) {d_n}(\tilde{s}^{'}_i\theta)^2\Big]\\
&=& \sumn\ep_s\Big[\frac{1}{2} \varphi_\alpha(\epsilon_i ) (1+o_p(1)) (\tilde{s}^{'}_i\theta)^2\Big]\\
&=& \sumn\ep_s\Big[\frac{1}{2} \varphi_\alpha(\epsilon_i ) (\tilde{\x}^{'}_i\theta_1)^2(1+o_p(1)) \Big]+ \sumn\ep_s\Big[\frac{1}{2} \varphi_\alpha(\epsilon_i ) (\tilde{W}^{'}_i\theta_2)^2(1+o_p(1)) \Big] \\ &&+\sumn\ep_s\Big[\frac{1}{2} \varphi_\alpha(\epsilon_i ) \theta_1^{'}\tilde{\x}_i\tilde{W}^{'}_i\theta_2(1+o_p(1)) \Big] \\
&=& C\cdot\theta_1^{'}W_n\theta_1(1+o_p(1)) +C\cdot||\theta_2||^2(1+o_p(1)) \\
&=& O_p(||\theta||^2),
\ey
where $C$ is some positive constant and the last second equation applies Lemma \ref{Prop of Spline} and Lemma \ref{Prop of Design}.

As for $\Sigma_2$, notice $\ep_s[\psi_\alpha(\epsilon_i)\tilde{s}_i\theta] = 0$ and  $\ep[\psi_\alpha(\epsilon_i)]^2 =\ep\big[2\epsilon_i|\alpha-\I(\epsilon_i<0)|\big]^2\leq C\cdot \ep[\epsilon_i]^2\leq C $. Thus,
\by
\var(\psi_\alpha(\epsilon_i)\tilde{s}_i\theta|x_i,z_i)& = & \ep[\psi_\alpha(\epsilon_i)]^2(\tilde{s}_i\theta)^2\\
&\leq&C\frac{d_n}{n}||\theta||^2
\ey
which means $\var(\Sigma_2|x_i,z_i)\leq \frac{1}{d_n}\sumn\var(\psi_\alpha(\epsilon_i)\tilde{s}_i\theta|x_i,z_i) = O(||\theta||^2).$ Thus, $|\Sigma_2| = O_p(||\theta||)$. Hence, $\Sigma_2$ is dominated by $\Sigma_1$  for  sufficiently large $L$, thus { (\ref{EQL4})} holds.

Then from $||\hat{\theta}|| = O_p(\sqrt{d_n})$ and notice the definition of $\theta$, we have $ ||W_B(\hat{\gamma}-\gamma_0)|| = O_p(\sqrt{d_n})$. Notice that $\ep[\varphi_\alpha(\epsilon_i)]$ is uniformly bounded away from zero,then
\by
&&\frac{1}{n}\sumn\ep[\varphi_\alpha(\epsilon_i)](\tilde{g}(z_i)-g_0(z_i))^2\\
&=&\frac{1}{n}\sumn\ep[\varphi_\alpha(\epsilon_i)](W(z_i)^{'}(\hat{\gamma}-\gamma_0)-u_{ni})^2\\
&\leq&\frac{1}{n}C(\hat{\gamma}-\gamma_0)^{'}W_B^2(\hat{\gamma}-\gamma_0) + O_p(k_n^{-2r})
=O_p(\frac{d_n}{n}).
\ey
\qed
\end{proof}

\begin{lemma}
\label{ShL5}
Assume Conditions 3.1-3.4 hold. Set $\tilde{\theta}_1 = {n}^{-1/2}{(X^{*'}B_nX^*)}^{-1}X^{*'}\psi_\alpha(\epsilon)$, where $\psi_\alpha(\epsilon) = (\psi_\alpha(\epsilon_1),\ldots,\psi_\alpha(\epsilon_n))^{'} = (\psi_\alpha(\epsilon_1),\ldots,\psi_\alpha(\epsilon_n))^{'}$, then
\by
||\tilde{\theta}_1|| = O_p(\sqrt{q_n}).
\ey
\end{lemma}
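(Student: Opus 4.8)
The plan is to view $\tilde{\theta}_1 = n^{-1/2}(X^{*'}B_nX^*)^{-1}X^{*'}\psi_\alpha(\epsilon)$ as a linear functional of the ``score'' vector $X^{*'}\psi_\alpha(\epsilon)$ and to bound its norm by $\|\tilde{\theta}_1\| \le n^{-1/2}\,\|(X^{*'}B_nX^*)^{-1}\|\cdot\|X^{*'}\psi_\alpha(\epsilon)\|$, controlling the two factors separately: the first via the earlier structural lemmas on the design, the second via a conditional second‑moment computation that exploits $m_\alpha(\epsilon_i\mid x_i,\z_i)=0$.

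\emph{The inverse Gram factor.} Here I would appeal to Lemma~\ref{Prop of Design}(2), which gives $n^{-1}X^{*'}B_nX^* = W_n + o_p(1)$, together with the fact that the weights $w_i=\ep[\varphi_\alpha(\epsilon_i)\mid x_i,\z_i]$ are uniformly bounded away from zero (Lemma~\ref{Prop of Loss}(3)) while the projected design is well conditioned under Condition~\ref{C(2)}. Consequently $\lambda_{\min}(W_n)$ is bounded below by a positive constant, so with probability tending to one $\lambda_{\min}(n^{-1}X^{*'}B_nX^*)\ge c_0>0$, and hence $\|(X^{*'}B_nX^*)^{-1}\| = \lambda_{\min}^{-1}(X^{*'}B_nX^*) = O_p(n^{-1})$.

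\emph{The score factor.} The key observation is that $X^*=(I_n-P)X_A$ is measurable with respect to the covariates alone, since $B_n$ is built from the conditional expectations $w_i$ rather than the realized errors; so I would condition on $\{(x_i,\z_i)\}_{i=1}^{n}$. Under this conditioning $\ep[\psi_\alpha(\epsilon_i)]=0$ because $m_\alpha(\epsilon_i\mid x_i,\z_i)=0$, and since $\psi_\alpha(r)^2 = 4|\alpha-\I(r<0)|^2 r^2 \le 4c_2^2 r^2$, Condition~\ref{C(1)} (in particular, the conditional second moment of $\epsilon_i$ is bounded) gives $\ep[\psi_\alpha(\epsilon_i)^2]\le C$. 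Mutual independence of the $\epsilon_i$ then makes the conditional covariance of $\psi_\alpha(\epsilon)$ diagonal and dominated by $C I_n$, so
\[
\ep\!\left[\|X^{*'}\psi_\alpha(\epsilon)\|^2 \,\big|\, \{(x_i,\z_i)\}\right] \;\le\; C\,\mathrm{tr}(X^{*'}X^*)\;\le\; C\,q_n\,\lambda_{\max}(X^{*'}X^*) \;=\; O(nq_n),
\]
the last step by Lemma~\ref{Prop of Design}(1). A conditional Markov inequality then yields $\|X^{*'}\psi_\alpha(\epsilon)\| = O_p(\sqrt{nq_n})$.

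\emph{Combining} the two estimates, $\|\tilde{\theta}_1\| \le n^{-1/2}\cdot O_p(n^{-1})\cdot O_p(\sqrt{nq_n}) = O_p(\sqrt{q_n})$ (in fact of even smaller order, which suffices a fortiori). I expect the only genuine obstacle to be the lower bound on $\lambda_{\min}(n^{-1}X^{*'}B_nX^*)$ in the first step: everything there hinges on transferring the well‑conditioning of $\Delta_n$ from Condition~\ref{C(2)} through the weighted spline projection to $W_n$, which is precisely what Lemma~\ref{Prop of Design}(2) packages. The score bound is a routine second‑moment argument, and it is worth emphasizing that only the \emph{second} moment of $\epsilon_i$ enters here — because $\psi_\alpha$ is linear in its argument — so the heavy‑tail moment requirement $k\ge1$ of Condition~\ref{C(1)} will bite elsewhere in the analysis, not in this lemma.
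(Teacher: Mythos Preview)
Your argument is correct and rests on the same two ingredients as the paper's proof: a lower bound on $\lambda_{\min}(n^{-1}X^{*'}B_nX^*)$ coming from Lemma~\ref{Prop of Design}(2) together with the well-conditioning of $W_n$, and a conditional second-moment bound on the score using $\ep[\psi_\alpha(\epsilon_i)\mid x_i,\z_i]=0$ and Condition~\ref{C(1)}. The only organizational difference is that the paper first invokes Lemma~\ref{Prop of Design}(2) to replace $(X^{*'}B_nX^*)^{-1}X^{*'}$ by $W_n^{-1}\Delta_n'$ and then writes $\tilde\theta_1=\sum_i D_{n,i}$ as a sum of conditionally independent terms whose second moment it computes directly, whereas you keep $X^*$ throughout and split via the operator-norm inequality; your route is slightly cleaner in that it sidesteps the approximation step, but both lead to the same $O_p(\sqrt{q_n})$ conclusion.
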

\begin{proof}
From the definition of $\tilde{\theta}_1$ and {Lemma \ref{Prop of Design}}, we have
 \by\tilde{\theta}_1 &=& \frac{1}{\sqrt{n}}(W_n+o_p(1))^{-1}(\Delta_n^{'}+o_p(1))\psi_\alpha(\epsilon)\\
 &=& \frac{1}{\sqrt{n}}W_n^{-1}(\Delta_n^{'}\psi_\alpha(\epsilon)(1+o_p(1))\\
 & =& \frac{1}{\sqrt{n}}\sumn W_n^{-1}\delta_i\psi_\alpha(\epsilon_i)\\
 &\triangleq& \sumn D_{n,i},\ey
  And $D_{n,i}$ are independent random variables satisfying $\ep\tilde{\theta}_1= 0$ and
 \by
 \ep||\tilde{\theta}_1||^2 &=& \sumn\ep||D_{n,i}||^2\\
 &=&\frac{1}{n}\sumn\ep[\psi_\alpha^2(\epsilon_i)(\delta_i^{'}W_n^{-2}\delta_i)]\\
 &\leq&\frac{C}{n}\sumn \ep||\delta_i||^2\\
 &=& O_p(q_n)
 \ey
 where the second last inequality follows from the fact
 \by
 \ep\psi_\alpha^2(\epsilon_i)&=&\ep[2|\alpha-\I(\epsilon_i<0)|\epsilon_i]^2\\
 &\leq& C\ep\epsilon^2_i=O(1)
 \ey
 Thus, we have
 \by
||\tilde{\theta}_1|| = O_p(\sqrt{q_n}).
\ey\qed
\end{proof}

\begin{lemma}\label{L3.21}
Assume Condition 3.1-3.4 hold, then for any positive constant C,
\by
\underset{ ||\theta_2||\leq C\sqrt{d_n}}{\sup}\frac{1}{n}\sumn | \I (\epsilon_i < u_{ni} + \tilde{\W}(\z_i)^{'}\theta_2)-\I (\epsilon_i < 0) | = o_p(1).
\ey
\end{lemma}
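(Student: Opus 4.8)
The plan is to avoid any empirical-process or covering-number argument over the growing-dimensional ball $\{\|\theta_2\|\le C\sqrt{d_n}\}$ by exploiting the monotonicity of the map $t\mapsto\I(\epsilon_i<t)$ in the threshold: this lets one dominate the supremum over $\theta_2$ by a single nonnegative random variable that does not depend on $\theta_2$ at all, after which the whole statement reduces to a first-moment computation. So the argument will have three short steps.

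\textbf{Step 1 (a $\theta_2$-free envelope).} I would first note that for any $A\in\R$, $|\I(\epsilon_i<A)-\I(\epsilon_i<0)|$ equals $1$ precisely when $\epsilon_i$ lies between $0$ and $A$, hence $|\I(\epsilon_i<A)-\I(\epsilon_i<0)|\le\I(|\epsilon_i|\le|A|)$. Applying this with $A=u_{ni}+\tilde{\W}(\z_i)'\theta_2$, and using Cauchy--Schwarz together with $\|\theta_2\|\le C\sqrt{d_n}$, gives $|A|\le|u_{ni}|+C\sqrt{d_n}\,\max_{1\le j\le n}\|\tilde{\W}(\z_j)\|=:\tau_n$, a bound uniform in $i$ and in $\theta_2$. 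Therefore
\[
\sup_{\|\theta_2\|\le C\sqrt{d_n}}\frac1n\sumn\bigl|\I(\epsilon_i<u_{ni}+\tilde{\W}(\z_i)'\theta_2)-\I(\epsilon_i<0)\bigr|\le\frac1n\sumn\I(|\epsilon_i|\le\tau_n),
\]
and it remains only to show the right-hand side is $o_p(1)$.

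\textbf{Step 2 ($\tau_n\to0$ with high probability).} By \cite{Sch07}, $\max_i|u_{ni}|=O_p(k_n^{-r})$, and by Lemma \ref{Prop of Spline}(4), $\max_j\|\tilde{\W}(\z_j)\|=O_p(\sqrt{k_n/n})$; so on the event $G_n=\{\max_i|u_{ni}|\le\alpha_2k_n^{-r}\}\cap\{\max_j\|\tilde{\W}(\z_j)\|\le\alpha_1\sqrt{k_n/n}\}$, whose probability tends to one, we have $\tau_n\le\bar\tau_n:=\alpha_2k_n^{-r}+\alpha_1C\sqrt{d_nk_n/n}$. Since $d_n=q_n+k_n$ with $q_n=O(n^{C_3})$, $C_3<1/2$ (Condition 3.4) and $k_n\asymp n^{1/(2r+1)}$ with $r>1.5$ (Condition 3.3), one checks $k_n^{-r}\to0$ and $d_nk_n/n\to0$, so $\bar\tau_n\to0$.

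\textbf{Step 3 (first moment) and conclusion.} Using boundedness of the conditional density of $\epsilon_i$ near the origin --- the same property already used in the proof of Lemma \ref{ShL4} to bound $\P(0<\epsilon_i<u_{ni})$ --- one has $\ep[\I(|\epsilon_i|\le\bar\tau_n)\mid\x_i,\z_i]=\P(|\epsilon_i|\le\bar\tau_n\mid\x_i,\z_i)\le C\bar\tau_n$, hence $\ep\bigl[\frac1n\sumn\I(|\epsilon_i|\le\bar\tau_n)\bigr]\le C\bar\tau_n\to0$. By Markov's inequality, for every $\eta>0$, $\P\bigl(\frac1n\sumn\I(|\epsilon_i|\le\tau_n)>\eta,\;G_n\bigr)\le\P\bigl(\frac1n\sumn\I(|\epsilon_i|\le\bar\tau_n)>\eta\bigr)\le C\bar\tau_n/\eta\to0$, and since $\P(G_n^{c})\to0$ this yields $\frac1n\sumn\I(|\epsilon_i|\le\tau_n)=o_p(1)$, which proves the lemma. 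The one point that needs care is the uniform-in-$\theta_2$ reduction in Step 1: once threshold-monotonicity collapses the supremum to a $\theta_2$-free envelope, there is no real obstacle --- Step 2 is a routine rate check (Conditions 3.3--3.4 are exactly what forces $\bar\tau_n\to0$) and Step 3 is a Markov bound; if one prefers not to assume a bounded density, the weaker small-ball bound $\P(|\epsilon_i|\le t)\le Ct$ near $0$, which is all that is actually used, suffices.
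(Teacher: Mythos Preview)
Your argument is correct and is genuinely different from the paper's proof. The paper splits the quantity into a centering term $I_2=\sup\frac1n\sum_i|F_i(u_{ni}+\tilde{\W}(\z_i)'\theta_2)-F_i(0)|$ and a centered empirical-process term $I_1$, bounds $I_2$ via the Lipschitz property of $F_i$ (same rates you use), and controls $I_1$ by a Bernstein inequality applied to the centered indicators $v_i$. Your route instead collapses the supremum over $\theta_2$ at the very first line via the threshold-monotonicity envelope $|\I(\epsilon_i<A)-\I(\epsilon_i<0)|\le\I(|\epsilon_i|\le|A|)$ and a single $\theta_2$-free bound $|A|\le\bar\tau_n$, after which only a first-moment (Markov) computation is needed.

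What this buys: your argument is shorter and avoids any concentration inequality; more importantly, it makes the uniformity in $\theta_2$ completely transparent, whereas the paper's Bernstein step as written bounds $\sup_{\theta_2}\P(\cdot)$ rather than $\P(\sup_{\theta_2}\cdot)$ and would strictly need a covering argument over the growing ball to turn that into the claimed uniform statement. Both approaches rely on the same implicit small-ball assumption $\P(|\epsilon_i|\le t\mid\x_i,\z_i)\le Ct$ near $0$ (the paper uses it when passing from $|F_i(\cdot)-F_i(0)|$ to $C|\cdot|$), so you are not assuming anything the paper does not. One cosmetic point: your $\tau_n$ as written still carries $|u_{ni}|$, which depends on $i$; replace it by $\max_i|u_{ni}|$ (as you effectively do in Step~2) so that $\tau_n$ is truly uniform in $i$.
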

\begin{proof}
Partition the left side of the equation into two parts.
\by
&&\underset{ ||\theta_2||\leq C\sqrt{d_n}}{\sup}\frac{1}{n}\sumn | \I (\epsilon_i < u_{ni} + \tilde{\W}(\z_i)^{'}\theta_2)-\I (\epsilon_i < 0) | \\
&\leq&\underset{ ||\theta_2||\leq C\sqrt{d_n}}{\sup}\frac{1}{n}\sumn | \I (\epsilon_i < u_{ni} + \tilde{\W}(\z_i)^{'}\theta_2)-\I (\epsilon_i < 0)  -\P (\epsilon_i < u_{ni} + \tilde{\W}(\z_i)^{'}\theta_2)+\P (\epsilon_i < 0 )|\\
&& +\underset{||\theta_2||\leq C\sqrt{d_n}}{\sup}\frac{1}{n}\sumn|\P (\epsilon_i < u_{ni} +\tilde{\W}(\z_i)^{'}\theta_2)-\P (\epsilon_i < 0 )|\\
&=& I_1 + I_2
\ey
Consider $I_2$,
 \by
 I_2  &=& \underset{ ||\theta_2||\leq C\sqrt{d_n}}{\sup}\frac{1}{n}\sumn|F_i(u_{ni} +\tilde{\W}(\z_i)^{'}\theta_2)-F_i ( 0 )|\\
 &\leq& \underset{||\theta_2||\leq C\sqrt{d_n}}{\sup}\frac{C}{n}\sumn|u_{ni} +\tilde{\W}(\z_i)^{'}\theta_2|\\
 &\leq& C\underset{||\theta_2||\leq\sqrt{d_n}}{\sup}  \max_i|u_{ni}| +\max_i\|\tilde{\W}(\z_i)^{'}\|\cdot\|\theta_2\| \\
 &=& O_p(k_n^{-r}+\sqrt{\frac{k_nd_n}{n}}) = o_p(1)
 \ey
 where the second last equation follows from {Lemma \ref{Prop of Spline}} and  $\max_i|u_{ni}|=O(k_n^{-r})$.

 As for $I_1$, set for $v_i,~i = 1,\ldots,n$ as
 \by
 v_i = \I (\epsilon_i < u_{ni} + \tilde{\W}(\z_i)^{'}\theta_2)-\I (\epsilon_i < 0)  -\P (\epsilon_i < u_{ni} + \tilde{\W}(\z_i)^{'}\theta_2)+\P (\epsilon_i < 0 ),
 \ey
 which are independent mean-zero random variables satisfying $|v_i| \leq 2$.
 Note that $\var (v_i) = \ep[\I (\epsilon_i < u_{ni} + \tilde{\W}(\z_i)^{'}\theta_2)-\I (\epsilon_i < 0) ]^2$ and $\I (\epsilon_i < u_{ni} + \tilde{\W}(\z_i)^{'}\theta_2)-\I (\epsilon_i < 0)$ is nonzero only when $0<\epsilon_i< u_{ni}+\tilde{\W}(\z_i)^{'}\theta_2$ or $0>\epsilon_i>u_{ni}+\tilde{\W}(\z_i)^{'}\theta_2$, depending on the sign of $u_{ni}+\tilde{\W}(\z_i)^{'}\theta_2$.
 Thus, under the set $\{\theta_2: ||\theta_2||\leq C\sqrt{d_n}\}$
 \by
 \sumn\var(v_i) &=& \sumn\ep(\I (\epsilon_i < u_{ni} + \tilde{\W}(\z_i)^{'}\theta_2)-\I (\epsilon_i < 0))^2\\
 &\leq& \sumn \P\Big(0<\epsilon_i< u_{ni}+\tilde{\W}(\z_i)^{'}\theta_2\Big)+\P \Big(0>\epsilon_i>u_{ni}+\tilde{\W}(\z_i)^{'}\theta_2\Big) \\
 &\leq& \P\Big(|\epsilon_i|< |u_{ni}+\tilde{\W}(\z_i)^{'}\theta_2|\Big)\\
 &\leq& \sumn C |u_{ni}+\tilde{\W}(\z_i)^{'}\theta_2| \\
 &=& O_p(nk_n^{-r}+\sqrt{nk_nd_n}),
 \ey
Then apply Bernstein Inequality in Lemma \ref{Bernstein inequality}, for any $\epsilon > 0$
\by
\underset{ ||\theta_2||\leq C\sqrt{d_n}}{\sup}\P(|\sumn v_i|> n\epsilon)&\leq &2\exp(-\frac{n^2\epsilon^2}{2(nk_n^{-r} + \sqrt{nd_nk_n} + 2n\epsilon/3)})\\
&\leq& 2\exp(-{n\epsilon}) \rightarrow 0
\ey
which means $I_1 = o_p(1)$.\qed
\end{proof}

\begin{lemma}\label{L3.22}
Assume Condition 3.1-3.4 hold, then for any finite positive constants M and C,
\by
\underset{||\theta_1- \tilde{\theta}_1|| \leq M, ||\theta_2||\leq C\sqrt{d_n}}{\sup}\Bigg|\frac{1}{n}\sumn \Big(\psi_\alpha(\epsilon_i - u_{ni}  - \tilde{\W}(\z_i)^{'}\theta_2)- \psi_\alpha(\epsilon_i)\Big)\tilde{\x}^{'}_i({\theta}_1-\tilde{\theta}_1)  \Bigg| = o_p(1)
\ey
and
\by
\underset{||\theta_1- \tilde{\theta}_1|| \leq M, ||\theta_2||\leq C\sqrt{d_n}}{\sup}\Bigg|\frac{1}{n}\sumn \Big(\varphi_\alpha(\epsilon_i - u_{ni}  - \tilde{\W}(\z_i)^{'}\theta_2)- \varphi_\alpha(\epsilon_i)\Big)((\tilde{\x}^{'}_i{\theta}_1)^{2} - (\tilde{\x}^{'}_i\tilde{\theta}_1)^{2})\Bigg| = o_p(1).
\ey

\end{lemma}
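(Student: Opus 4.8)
The plan is to prove both displays by the same elementary device: bound each average by a product of a pointwise-size factor and an aggregated-perturbation factor, then substitute the rates already established in the earlier lemmas (in particular Lemmas~\ref{Prop of Loss}, \ref{Prop of Spline}, \ref{Prop of Design} and~\ref{ShL5}). In each display the summand factors as $a_i(\theta_2)\,b_i(\theta_1)$, where $a_i$ depends on $\theta_2$ only and $b_i$ on $\theta_1$ only. Since $\big|\tfrac1n\sumn a_ib_i\big|\le(\max_i|b_i|)\big(\tfrac1n\sumn|a_i|\big)$ and both factors on the right are nonnegative, the supremum over the product set $\{\|\theta_1-\tilde\theta_1\|\le M\}\times\{\|\theta_2\|\le C\sqrt{d_n}\}$ splits, so it suffices to control $\sup_{\theta_1}\max_i|b_i|$ and $\sup_{\theta_2}\tfrac1n\sumn|a_i|$ separately.

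For the first display I take $a_i=\psi_\alpha(\epsilon_i-u_{ni}-\tilde\W(\z_i)'\theta_2)-\psi_\alpha(\epsilon_i)$ and $b_i=\tilde\x_i'(\theta_1-\tilde\theta_1)$. The Lipschitz bound of Lemma~\ref{Prop of Loss}(2) gives $|a_i|\le 2c_2|u_{ni}+\tilde\W(\z_i)'\theta_2|$, so by Cauchy--Schwarz
\[
\frac1n\sumn|a_i|\ \le\ 2c_2\max_i|u_{ni}|\ +\ 2c_2\Big(\tfrac1n\,\theta_2'W_B^{-1}W'WW_B^{-1}\theta_2\Big)^{1/2}.
\]
Since $w_i$ is bounded away from $0$ by Lemma~\ref{Prop of Loss}(3), one has $W'W\preceq w_{\min}^{-1}W_B^2$, hence $\lambda_{max}(W_B^{-1}W'WW_B^{-1})\le C$; combined with $\max_i|u_{ni}|=O(k_n^{-r})$ this yields $\sup_{\|\theta_2\|\le C\sqrt{d_n}}\tfrac1n\sumn|a_i|=O_p(k_n^{-r}+\sqrt{d_n/n})$. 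On the other hand $\sup_{\|\theta_1-\tilde\theta_1\|\le M}\max_i|b_i|\le M\max_i\|\tilde\x_i\|=O\big(\sqrt{q_n/n}\big)$, the last rate being the one invoked in the proof of Lemma~\ref{L3.21}. Multiplying, the supremum is $O_p\big(\sqrt{q_n/n}\,(k_n^{-r}+\sqrt{d_n/n})\big)=o_p(1)$, because $q_n=o(n)$ and $d_n=q_n+k_n=o(n)$ under Conditions~\ref{C(3)}--\ref{C(4)}.

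For the second display I take $a_i=\varphi_\alpha(\epsilon_i-u_{ni}-\tilde\W(\z_i)'\theta_2)-\varphi_\alpha(\epsilon_i)$ and $b_i=(\tilde\x_i'\theta_1)^2-(\tilde\x_i'\tilde\theta_1)^2$. Here $\varphi_\alpha$ is not Lipschitz, so instead I exploit that it takes only the two values $2\alpha$ and $2(1-\alpha)$ and that $a_i$ vanishes unless $\epsilon_i$ and $\epsilon_i-u_{ni}-\tilde\W(\z_i)'\theta_2$ lie on opposite sides of $0$; hence $|a_i|\le 2$ and $\tfrac1n\sumn|a_i|\le 2$ for every $\theta_2$, so no uniformity argument is needed for this factor (a sharper Bernstein bound on the number of sign changes is available but unnecessary). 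For $b_i$, write $(\tilde\x_i'\theta_1)^2-(\tilde\x_i'\tilde\theta_1)^2=\big(\tilde\x_i'(\theta_1-\tilde\theta_1)\big)\big(\tilde\x_i'(\theta_1+\tilde\theta_1)\big)$ and use $\|\theta_1+\tilde\theta_1\|\le M+2\|\tilde\theta_1\|$ on the ball, obtaining $\max_i|b_i|\le M(M+2\|\tilde\theta_1\|)\max_i\|\tilde\x_i\|^2=O_p\big(q_n^{3/2}/n\big)$ from $\max_i\|\tilde\x_i\|^2=O(q_n/n)$ and $\|\tilde\theta_1\|=O_p(\sqrt{q_n})$ (Lemma~\ref{ShL5}). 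The product is $O_p(q_n^{3/2}/n)=o_p(1)$, since $q_n=O(n^{C_3})$ with $C_3<\tfrac12$ forces $q_n^{3/2}=o(n)$.

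The only genuinely delicate point is the uniformity over the \emph{growing} ball $\|\theta_2\|\le C\sqrt{d_n}$ in the first display: one must show that the perturbation $u_{ni}+\tilde\W(\z_i)'\theta_2$ is small in the aggregated ($\ell^2$) sense even though $d_n\to\infty$. This is exactly what the spectral bound $\lambda_{max}(W_B^{-1}W'WW_B^{-1})=O_p(1)$ delivers; it follows from Lemma~\ref{Prop of Spline} together with the boundedness of $w_i$, paired with $d_n=o(n)$. Everything else is routine bookkeeping with the already-proved rates $\max_i\|\tilde\x_i\|=O(\sqrt{q_n/n})$, $\max_i\|\tilde\W(\z_i)\|=O_p(\sqrt{k_n/n})$, $\|\tilde\theta_1\|=O_p(\sqrt{q_n})$ and $\max_i|u_{ni}|=O(k_n^{-r})$.
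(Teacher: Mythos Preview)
Your argument is correct and follows essentially the same factorization as the paper: bound $\big|\tfrac1n\sum a_ib_i\big|$ by a max of the $b_i$ factor times an aggregate of the $a_i$ factor, then plug in the rates $\max_i\|\tilde\x_i\|=O(\sqrt{q_n/n})$, $\|\tilde\theta_1\|=O_p(\sqrt{q_n})$, and $\max_i|u_{ni}|=O(k_n^{-r})$. The only differences are cosmetic: for the first display the paper bounds $\max_i|a_i|$ directly via $\max_i\|\tilde\W(\z_i)\|=O_p(\sqrt{k_n/n})$ (Lemma~\ref{Prop of Spline}(4)) rather than your spectral bound on $W_B^{-1}W'WW_B^{-1}$, and for the second display the paper actually invokes Lemma~\ref{L3.21} to make $\tfrac1n\sum|a_i|=o_p(1)$, whereas your observation that the trivial bound $\tfrac1n\sum|a_i|\le 2$ already suffices (since $\max_i|b_i|=O_p(q_n^{3/2}/n)=o_p(1)$) is a legitimate simplification.
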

\begin{proof}
For the first part, notice that
\by
&&\underset{||\theta_1- \tilde{\theta}_1|| \leq M, ||\theta_2||\leq C\sqrt{d_n}}{\sup}\Bigg|\frac{1}{n}\sumn \Big(\psi_\alpha(\epsilon_i - u_{ni}  - \tilde{\W}(\z_i)^{'}\theta_2)- \psi_\alpha(\epsilon_i)\Big)\tilde{\x}^{'}_i({\theta}_1-\tilde{\theta}_1)  \Bigg|\\
&\leq&\underset{||\theta_1- \tilde{\theta}_1|| \leq M, ||\theta_2||\leq C\sqrt{d_n}}{\sup}\frac{1}{n}\sumn \Big|\psi_\alpha(\epsilon_i - u_{ni}  - \tilde{\W}(\z_i)^{'}\theta_2)- \psi_\alpha(\epsilon_i)\Big| \Big|\tilde{\x}^{'}_i({\theta}_1-\tilde{\theta}_1)  \Big|\\
&\leq& \underset{||\theta_1- \tilde{\theta}_1|| \leq M, ||\theta_2||\leq C\sqrt{d_n}}{\sup} \max_i \Big|\psi_\alpha(\epsilon_i - u_{ni}  - \tilde{\W}(\z_i)^{'}\theta_2)- \psi_\alpha(\epsilon_i)\Big|\cdot \max_i ||\tilde{\x}^{'}_i|| \cdot ||{\theta}_1-\tilde{\theta}_1||\\
&\leq&\underset{||\theta_1- \tilde{\theta}_1|| \leq M, ||\theta_2||\leq C\sqrt{d_n}}{\sup} \max_i C\cdot\Big| u_{ni}  +\tilde{\W}(\z_i)^{'}\theta_2\Big|\cdot \max_i ||\tilde{\x}^{'}_i|| \cdot ||{\theta}_1-\tilde{\theta}_1||\\
&\leq& O_p(k_n^{-r} + \sqrt{d_nk_n/n})\cdot O_p(\sqrt{q_n/n}) = o_p(1),
\ey
where the last inequality follows from the fact $\max_i||\tilde{\x}_i|| = O(\sqrt{q_n/n})$.

Consider the second part
\by
&&\underset{||\theta_1- \tilde{\theta}_1|| \leq M, ||\theta_2||\leq C\sqrt{d_n}}{\sup}\Bigg|\frac{1}{n}\sumn \Big(\varphi_\alpha(\epsilon_i - u_{ni}  - \tilde{\W}(\z_i)^{'}\theta_2)- \varphi_\alpha(\epsilon_i)\Big)((\tilde{\x}^{'}_i{\theta}_1)^{2} - (\tilde{\x}^{'}_i\tilde{\theta}_1)^{2})\Bigg|\\
&\leq& \underset{||\theta_1- \tilde{\theta}_1|| \leq M, ||\theta_2||\leq C\sqrt{d_n}}{\sup}\frac{2}{n}\sumn\Big |\I (\epsilon_i < u_{ni} + \tilde{\W}(\z_i)^{'}\theta_2)-\I (\epsilon_i < 0)\Big| \cdot\max_{i}\Big|(\tilde{\x}^{'}_i{\theta}_1)^{2} - (\tilde{\x}^{'}_i\tilde{\theta}_1)^{2}\Big|
\ey
Notice that $\max_i||\tilde{\x}_i|| = O(\sqrt{q_n/n})$ and use Lemma {\ref{ShL5}},
\by
\max_{i}\Big|((\tilde{\x}^{'}_i{\theta}_1)^{2} - (\tilde{\x}^{'}_i\tilde{\theta}_1)^{2})\Big|
&=&\max_i\Big|(\theta_1+\tilde{\theta}_1)^{'}\tilde{x}_i\tilde{x}_i^{'}(\theta_1-\tilde{\theta}_1) \Big|\\
&\leq&  \max_{i} ||\tilde{x}^{}_i||^2\cdot||\theta_1-\tilde{\theta}_1||\cdot||\theta_1+\tilde{\theta}_1||\\
&\leq&  \max_{i} ||\tilde{x}^{}_i||^2\cdot||\theta_1-\tilde{\theta}_1||\cdot\big(||\theta_1-\tilde{\theta}_1|| + 2||\tilde{\theta}_1||\big)\\
&\leq&C\cdot\frac{q_n^{3/2}}{n} = o_p(1)
\ey
Thus, apply Lemma {\ref{L3.21}}, we have
\by
\underset{||\theta_1- \tilde{\theta}_1|| \leq M, ||\theta_2||\leq C\sqrt{d_n}}{\sup}\Bigg|\frac{1}{n}\sumn \Big(\varphi_\alpha(\epsilon_i - u_{ni}  - \tilde{\W}(\z_i)^{'}\theta_2)- \varphi_\alpha(\epsilon_i)\Big)((\tilde{\x}^{'}_i{\theta}_1)^{2} - (\tilde{\x}^{'}_i\tilde{\theta}_1)^{2})\Bigg| = o_p(1)
\ey\qed
\end{proof}

\begin{lemma}\label{ShL6}
Assume Condition 3.1-3.4 hold, then
\by
||\hat{\theta}_1 - \tilde{\theta}_1|| = o_p(1).
\ey

\end{lemma}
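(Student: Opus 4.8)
\noindent\emph{Proof plan.} The plan is a convexity argument that pins $\hat\theta_1$ to an arbitrarily small ball around $\tilde\theta_1$. Write $Q_n(\theta_1,\theta_2)=\frac1n\sumn\phi_\alpha(\epsilon_i-u_{ni}-\tilde{\x}_i^{'}\theta_1-\tilde{\W}(\z_i)^{'}\theta_2)$; since $\phi_\alpha$ is convex and its argument is affine in $\theta_1$, the map $\theta_1\mapsto Q_n(\theta_1,\hat\theta_2)$ is convex, and $\hat\theta_1$ minimizes it because $(\hat\theta_1,\hat\theta_2)$ is a joint minimizer. By the standard convexity lemma it therefore suffices to show that for every fixed $\eta>0$, with probability tending to one, $\inf_{\|v\|=\eta}n\big[Q_n(\tilde\theta_1+v,\hat\theta_2)-Q_n(\tilde\theta_1,\hat\theta_2)\big]>0$. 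Applying Lemma \ref{Prop of Loss}(1) summand by summand, with $r=e_i-\tilde{\x}_i^{'}v$, $r_0=e_i$, $e_i:=\epsilon_i-u_{ni}-\tilde{\W}(\z_i)^{'}\hat\theta_2-\tilde{\x}_i^{'}\tilde\theta_1$, this difference is at least $-\langle S_n,v\rangle+c_1\,v^{'}(n^{-1}X^{*'}X^{*})v$ with $S_n:=\sumn\psi_\alpha(e_i)\tilde{\x}_i$. Condition \ref{C(2)} and Lemma \ref{Prop of Design}(1) force the quadratic form above a fixed positive multiple of $\|v\|^2$ with probability tending to one, so the whole statement reduces to proving $\|S_n\|=o_p(1)$.

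\smallskip\noindent To that end, first use Lemma \ref{ShL4} to fix $C$ with $\P(\|\hat\theta_2\|\le C\sqrt{d_n})\to1$ and work on that event, so the uniform bounds of Lemmas \ref{L3.21}--\ref{L3.22} are available with $\theta_2=\hat\theta_2$. The normal equation characterizing $\tilde\theta_1$ in Lemma \ref{ShL5} reads $\sumn\tilde{\x}_i\big(\psi_\alpha(\epsilon_i)-w_i\tilde{\x}_i^{'}\tilde\theta_1\big)=\mathbf 0$. Subtracting it from $S_n$, writing $\psi_\alpha(e_i)-\psi_\alpha(\epsilon_i)=-\varphi_\alpha(\epsilon_i)a_i+\rho_i$ with $a_i:=u_{ni}+\tilde{\W}(\z_i)^{'}\hat\theta_2+\tilde{\x}_i^{'}\tilde\theta_1$ and kink remainder $|\rho_i|\le C|a_i|\,\I(|\epsilon_i|\le|a_i|)$, and splitting $\varphi_\alpha(\epsilon_i)=w_i+(\varphi_\alpha(\epsilon_i)-w_i)$, yields the exact identity
\bqs
S_n=-\sumn w_iu_{ni}\tilde{\x}_i\;-\;\sumn\big(\varphi_\alpha(\epsilon_i)-w_i\big)a_i\tilde{\x}_i\;+\;\sumn\rho_i\tilde{\x}_i ,
\eqs
since $\sumn w_i\tilde{\x}_i\tilde{\W}(\z_i)^{'}\hat\theta_2=\mathbf 0$ by the orthogonality relation of Lemma \ref{Prop of Design}(3) and the two $\sumn w_i(\tilde{\x}_i^{'}\tilde\theta_1)\tilde{\x}_i$ contributions cancel.

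\smallskip\noindent It then remains to bound the three sums. For $\sumn w_iu_{ni}\tilde{\x}_i$: because $\tilde{\x}_i$ is $n^{-1/2}$ times the weighted--projection residual and $u_{ni}$, as a function of $\z_i$, lies in $\mathcal G\cap\mathcal H_r$ --- the class over which $h_j^*$ is optimized --- its expectation is negligible by that orthogonality, while its conditional variance is $O(q_nk_n^{-2r})$, which is $o(1)$ under Condition \ref{C(3)} and $\max_i|u_{ni}|=O(k_n^{-r})$, so Lemma \ref{Bernstein inequality} gives $o_p(1)$. The term $\sumn(\varphi_\alpha(\epsilon_i)-w_i)a_i\tilde{\x}_i$ is split along $a_i$ into three conditionally mean--zero pieces: the $\tilde{\x}_i^{'}\tilde\theta_1$-- and $u_{ni}$--pieces are controlled by $\varphi_\alpha$ bounded, $\max_i\|\tilde{\x}_i\|=O(\sqrt{q_n/n})$ and $\|\tilde\theta_1\|=O_p(\sqrt{q_n})$ (Lemma \ref{ShL5}), while the $\tilde{\W}(\z_i)^{'}\hat\theta_2$--piece factors out $\|\hat\theta_2\|=O_p(\sqrt{d_n})$ and bounds the mean--zero matrix $\sumn(\varphi_\alpha(\epsilon_i)-w_i)\tilde{\x}_i\tilde{\W}(\z_i)^{'}$ using $\max_i\|\tilde{\W}(\z_i)\|=O_p(\sqrt{k_n/n})$ (Lemma \ref{Prop of Spline}). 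Finally $\sumn\rho_i\tilde{\x}_i$ is treated by splitting $a_i$ into its nuisance part $u_{ni}+\tilde{\W}(\z_i)^{'}\hat\theta_2$ and parameter part $\tilde{\x}_i^{'}\tilde\theta_1$ and invoking the indicator-- and gradient--difference estimates of Lemmas \ref{L3.21}--\ref{L3.22} (together with $\max_i|a_i|=o_p(1)$). Once $\|S_n\|=o_p(1)$ is in hand, the infimum in the first paragraph is at least $-o_p(1)\,\eta+c\eta^2>0$ for large $n$, convexity forces $\|\hat\theta_1-\tilde\theta_1\|<\eta$ with probability tending to one, and letting $\eta\downarrow0$ completes the proof. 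I expect this last step --- making every remainder from replacing the (nonexistent) second derivative $\varphi_\alpha(\epsilon_i)$ by $w_i$, and from the kink of $\psi_\alpha$, negligible uniformly over $\|\theta_2\|\le C\sqrt{d_n}$ --- to be the crux; this is exactly what the careful estimates in Lemmas \ref{L3.21}--\ref{L3.22} and the orthogonality identity of Lemma \ref{Prop of Design}(3) are built to deliver.
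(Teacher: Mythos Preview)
Your approach and the paper's share the same skeleton: restrict to $\|\hat\theta_2\|\le C\sqrt{d_n}$ via Lemma~\ref{ShL4}, exploit convexity in $\theta_1$, and use the normal equation defining $\tilde\theta_1$ together with the orthogonality $\sumn w_i\tilde{\x}_i\tilde{\W}(\z_i)'=\mathbf 0$ from Lemma~\ref{Prop of Design}(3). The organizational difference is that the paper Taylor--expands the \emph{difference} $\tilde Q_i(\theta_1,\tilde\theta_1,\theta_2)=\phi_\alpha(\cdot\,;\theta_1)-\phi_\alpha(\cdot\,;\tilde\theta_1)$ around the \emph{nuisance--adjusted} point $\epsilon_i-u_{ni}-\tilde{\W}(\z_i)'\theta_2$, splits into conditional mean and deviation, and then invokes Lemma~\ref{L3.22} to transfer $\psi_\alpha,\varphi_\alpha$ back to $\epsilon_i$; this yields $\frac1n\sumn\tilde Q_i=\frac1{2n}(\theta_1-\tilde\theta_1)'W_n(\theta_1-\tilde\theta_1)+o_p(\cdot)$ uniformly, from which positivity on the sphere follows. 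You instead apply the strong--convexity lower bound of Lemma~\ref{Prop of Loss}(1) to isolate the score $S_n=\sumn\psi_\alpha(e_i)\tilde{\x}_i$ and reduce everything to $\|S_n\|=o_p(1)$, expanding $\psi_\alpha(e_i)$ around $\psi_\alpha(\epsilon_i)$ with the full perturbation $a_i$.

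Your route is more direct, but the choice of expansion point matters for the remainder. Because you linearize in the \emph{entire} $a_i=u_{ni}+\tilde{\W}(\z_i)'\hat\theta_2+\tilde{\x}_i'\tilde\theta_1$, the kink remainder $\rho_i$ carries the parameter piece $\tilde{\x}_i'\tilde\theta_1$ as well; Lemmas~\ref{L3.21}--\ref{L3.22} are tailored to the \emph{nuisance} perturbation $u_{ni}+\tilde{\W}(\z_i)'\theta_2$ only and do not immediately dispatch the $\tilde{\x}_i'\tilde\theta_1$--part of $\sumn\rho_i\tilde{\x}_i$ (a crude bound gives $O_p(\sqrt{nq_n}\,t_n^2)$ with $t_n=\max_i|a_i|$, which is not $o_p(1)$ across the full range $q_n=o(n^{1/2})$). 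The paper sidesteps this by expanding about $\epsilon_i-u_{ni}-\tilde{\W}'\theta_2$, so its second--order remainder $R_{i,n}(\theta_1)-R_{i,n}(\tilde\theta_1)$ involves only $(\tilde{\x}_i'\theta_1)^2-(\tilde{\x}_i'\tilde\theta_1)^2$, which is bounded by $\max_i\|\tilde{\x}_i\|^2\,\|\theta_1-\tilde\theta_1\|\,(\|\theta_1-\tilde\theta_1\|+2\|\tilde\theta_1\|)=O_p(q_n^{3/2}/n)=o_p(1)$ without any indicator analysis. Similarly, your bias term $\sumn w_iu_{ni}\tilde{\x}_i$ needs a genuine orthogonality argument (e.g.\ via $\tilde{\x}_i\approx n^{-1/2}\delta_i$ and the first--order condition defining $h_j^*$) rather than just the Bernstein inequality; the paper absorbs the analogous contribution into a $(1+o_p(1))$ factor after using $\ep_s[\psi_\alpha(\epsilon_i)]=0$, which is slicker though also somewhat informal. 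In short: your plan is sound and uses the same machinery, but to make the $\|S_n\|=o_p(1)$ step airtight you should either (i) expand $\psi_\alpha$ around $\epsilon_i-u_{ni}-\tilde{\W}'\hat\theta_2$ rather than $\epsilon_i$, so that $\rho_i$ depends on $\tilde{\x}_i'\tilde\theta_1$ alone, or (ii) supply sharper bounds for the $\tilde{\x}_i'\tilde\theta_1$--contributions to terms (2) and (3) (matrix concentration for $\sumn(\varphi_\alpha(\epsilon_i)-w_i)\tilde{\x}_i\tilde{\x}_i'$ works for (2); (3) is the delicate one).
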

\begin{proof}
Define
\by
\tilde{Q}_i(\theta_1,\tilde{\theta}_1,\theta_2) = \phi_\alpha(\epsilon_i - u_{ni} -\tilde{\x}_i^{'}\theta_1 - \tilde{\W}(\z_i)^{'}\theta_2)  - \phi_\alpha(\epsilon_i - u_{ni} -\tilde{\x}_i^{'}\tilde{\theta}_1 - \tilde{\W}(\z_i)^{'}\theta_2).
\ey
We first show that for any positive constants $C$ and $M$,
\be\label{Euq6.9}
\P\left(\underset{||\theta_1- \tilde{\theta}_1|| >M, ||\theta_2||\leq C\sqrt{d_n}}{\inf}\frac{1}{n}\sumn \tilde{Q}_i(\theta_1,\tilde{\theta}_1,\theta_2)>0 \right) \rightarrow 1.
\ee
Notice
\by
\tilde{Q}_i(\theta_1,\tilde{\theta}_1,\theta_2) = (\tilde{Q}_i(\theta_1,\tilde{\theta}_1,\theta_2) - \ep[\tilde{Q}_i(\theta_1,\tilde{\theta}_1,\theta_2)]) + \ep[\tilde{Q}_i(\theta_1,\tilde{\theta}_1,\theta_2)].
\ey
Considering the expectation part, we are going to show that
\by
\underset{||\theta_1- \tilde{\theta}_1|| \leq M, ||\theta_2||\leq C\sqrt{d_n}}{\sup} |\frac{1}{n}\sumn \ep[\tilde{Q}_i(\theta_1,\tilde{\theta}_1,\theta_2)] - \frac{1}{2n}\Big(\theta^{'}_1W_n\theta_1- \tilde{\theta}^{'}_1W_n\tilde{\theta}_1\Big)| = o_p(1).
\ey
 Through Taylor Expansion,
\by
&&\frac{1}{n}\sumn \ep[\tilde{Q}_i(\theta_1,\tilde{\theta}_1,\theta_2)] \\
&=&\frac{1}{n}\sumn \ep\Big[\phi_\alpha(\epsilon_i - u_{ni}  - \tilde{\W}(\z_i)^{'}\theta_2) -
\psi_\alpha(\epsilon_i - u_{ni}  - \tilde{\W}(\z_i)^{'}\theta_2)\tilde{\x}^{'}_i{\theta}_1\\
&&+\frac{1}{2}\varphi_\alpha(\epsilon_i - u_{ni}  - \tilde{\W}(\z_i)^{'}\theta_2)(\tilde{\x}^{'}_i{\theta}_1)^{2} + o(\tilde{\x}^{'}_i{\theta}_1)^{2}\\
&& - \phi_\alpha(\epsilon_i - u_{ni} - \tilde{\W}(\z_i)^{'}\theta_2)] -\psi_\alpha(\epsilon_i - u_{ni}  - \tilde{\W}(\z_i)^{'}\theta_2)\tilde{\x}^{'}_i\tilde{\theta}_1\\
&&+\frac{1}{2}\varphi_\alpha(\epsilon_i - u_{ni}  - \tilde{\W}(\z_i)^{'}\theta_2)(\tilde{\x}^{'}_i\tilde{\theta}_1)^{2} + o(\tilde{\x}^{'}_i\tilde{\theta}_1)^{2}\Big]\\
&=& \frac{1}{n}\sumn \ep\Big[ -
\psi_\alpha(\epsilon_i - u_{ni}  - \tilde{\W}(\z_i)^{'}\theta_2)\tilde{\x}^{'}_i({\theta}_1-\tilde{\theta}_1)\\
&&+\frac{1}{2}\varphi_\alpha(\epsilon_i - u_{ni}  - \tilde{\W}(\z_i)^{'}\theta_2)((\tilde{\x}^{'}_i{\theta}_1)^{2} - (\tilde{\x}^{'}_i\tilde{\theta}_1)^{2})(1+o(1))\Big]
\ey
Then applying Lemma 6.8, we have
\by
&&\frac{1}{n}\sumn \ep[\tilde{Q}_i(\theta_1,\tilde{\theta}_1,\theta_2)] \\
&=&\frac{1}{n}\sumn \ep\Big[ -
\psi_\alpha(\epsilon_i )\tilde{\x}^{'}_i({\theta}_1-\tilde{\theta}_1)\Big](1+o_p(1))\\
&&+\frac{1}{n}\sumn \ep\Big[\frac{1}{2}\varphi_\alpha(\epsilon_i )((\tilde{\x}^{'}_i{\theta}_1)^{2} - (\tilde{\x}^{'}_i\tilde{\theta}_1)^{2})\Big](1+o_p(1))\\
&=&\frac{1}{n}\sumn \ep\Big[ \frac{1}{2}\varphi_\alpha(\epsilon_i )((\tilde{\x}^{'}_i{\theta}_1)^{2} - (\tilde{\x}^{'}_i\tilde{\theta}_1)^{2})\Big](1+o_p(1))
\ey
under the set $\{(\theta_1,\theta_2):||\theta_1- \tilde{\theta}_1|| \leq M, ||\theta_2||\leq C\sqrt{d_n}\}$, for any positive constants M and C, where the last equation follows the fact that $\ep[\phi_\alpha(\epsilon_i)] = 0,~~\forall i =1,\ldots,n$.  And this implies
\by
\underset{||\theta_1- \tilde{\theta}_1|| \leq M, ||\theta_2||\leq C\sqrt{d_n}}{\sup} |\frac{1}{n}\sumn \ep[\tilde{Q}_i(\theta_1,\tilde{\theta}_1,\theta_2)] - \frac{1}{2n}\Big(\theta^{'}_1W_n\theta_1- \tilde{\theta}^{'}_1W_n\tilde{\theta}_1\Big)| = o_p(1).
\ey
Next we introduce
\by
R_{i,n}(\theta_1) &=&  \phi_\alpha(\epsilon_i - u_{ni} -\tilde{\x}_i^{'}\theta_1 - \tilde{\W}(\z_i)^{'}\theta_2) -  \phi_\alpha(\epsilon_i - u_{ni}  - \tilde{\W}(\z_i)^{'}\theta_2)\\
&& +  \psi_\alpha(\epsilon_i - u_{ni} - \tilde{\W}(\z_i)^{'}\theta_2)\tilde{\x}_i^{'}\theta_1.
\ey
Through Taylor Expansion, there exits $0<\xi_{i,\theta_1},~\xi_{i,\tilde{\theta}_1}<1$ satisfying
\by
R_{i,n}({\theta}_1) &= &\frac{1}{2} \varphi_\alpha(\epsilon_i - u_{ni} -\xi_{i,\theta_1}\tilde{\x}_i^{'}\theta_1 - \tilde{\W}(\z_i)^{'}\theta_2)(\tilde{\x}_i^{'}\theta_1)^2\\
R_{i,n}({\tilde{\theta}}_1) &= &\frac{1}{2} \varphi_\alpha(\epsilon_i - u_{ni} -\xi_{i,\tilde{\theta}}\tilde{\x}_i^{'}\tilde{\theta} - \tilde{\W}(\z_i)^{'}\theta_2)(\tilde{\x}_i^{'}\tilde{\theta}_1)^2.
\ey
Notice $\frac{1}{2} \varphi_\alpha(\epsilon_i - u_{ni} -\xi_{i,\theta_1}\tilde{\x}_i^{'}\theta_1 - \tilde{\W}(\z_i)^{'}\theta_2) \leq 1$, thus
\by
&&\underset{||\theta_1- \tilde{\theta}_1|| \leq M, ||\theta_2||\leq C\sqrt{d_n}}{\sup}\Big|\frac{1}{n}\sumn R_{i,n}(\theta_1) -  R_{i,n}(\tilde{\theta}_1) -\ep[ R_{i,n}(\theta_1) -  R_{i,n}(\tilde{\theta}_1)]  \Big|\\
&\leq& 2\underset{||\theta_1- \tilde{\theta}_1|| \leq M, ||\theta_2||\leq C\sqrt{d_n}}{\sup}\Big|\frac{1}{n}\sumn((\tilde{\x}_i^{'}\theta_1)^2 - (\tilde{\x}_i^{'}\tilde{\theta}_1)^2)\Big|\\
&\leq&C\underset{||\theta_1- \tilde{\theta}_1|| \leq M, ||\theta_2||\leq C\sqrt{d_n}}{\sup}\max_{i} \Big|2(\theta_1-\tilde{\theta}_1)^{}\tilde{\x}_i\tilde{\x}^{'}_i\tilde{\theta}_1\Big|\\
&\leq& C\underset{||\theta_1- \tilde{\theta}_1|| \leq M, ||\theta_2||\leq C\sqrt{d_n}}{\sup} \max_{i} ||\tilde{x}^{}_i||^2\cdot||\theta_1-\tilde{\theta}_1||\cdot||\tilde{\theta}_1||\\
&\leq&C\frac{q_n^{3/2}}{n} = o_p(1)
\ey
where last inequality follows from the fact $\max_i||\tilde{\x}_i|| = O(\sqrt{q_n/n})$ and Lemma {\ref{ShL5}}.

Hence, under the set $\{(\theta_1,\theta_2):{||\theta_1- \tilde{\theta}_1|| \leq M, ||\theta_2||\leq C\sqrt{d_n}}\}$
\by
&&\frac{1}{n}\sumn \tilde{Q}_i(\theta_1,\tilde{\theta}_1,\theta_2) -\frac{1}{n}\sumn \ep[\tilde{Q}_i(\theta_1,\tilde{\theta}_1,\theta_2)] \\
&=& \frac{1}{n}\sumn\Bigg(-\psi_\alpha(\epsilon_i - u_{ni} - \tilde{\W}(\z_i)^{'}\theta_2)\tilde{\x}_i^{'}(\theta_1-\tilde{\theta}_1)  + R_{i,n}(\theta_1) -  R_{i,n}(\tilde{\theta}_1)\\
&&+ \ep[\psi_\alpha(\epsilon_i - u_{ni} - \tilde{\W}(\z_i)^{'}\theta_2)\tilde{\x}_i^{'}(\theta_1-\tilde{\theta}_1)\  -\ep[ R_{i,n}(\theta_1) -  R_{i,n}(\tilde{\theta}_1)]
\Bigg)\\
&=& \frac{1}{n}\sumn\Bigg(-\psi_\alpha(\epsilon_i )\tilde{\x}_i^{'}(\theta_1-\tilde{\theta}_1)(1+o_p(1))  + R_{i,n}(\theta_1) -  R_{i,n}(\tilde{\theta}_1)\\
&&+ \ep[\psi_\alpha(\epsilon_i )\tilde{\x}_i^{'}(\theta_1-\tilde{\theta}_1)(1+o_p(1))  -\ep[ R_{i,n}(\theta_1) -  R_{i,n}(\tilde{\theta}_1)]\Bigg)\\
&=&\frac{1}{n}\sumn\Bigg(-\psi_\alpha(\epsilon_i )\tilde{\x}_i^{'}(\theta_1-\tilde{\theta}_1)(1+o_p(1))  \\
&& + R_{i,n}(\theta_1) -  R_{i,n}(\tilde{\theta}_1) -\ep[ R_{i,n}(\theta_1) -  R_{i,n}(\tilde{\theta}_1)]\Bigg)
\ey
where the second last equality uses Lemma \ref{L3.22} and the last equality follows by the fact $\ep[\psi_\alpha(\epsilon_i)] = 0,~\forall i = 1,\ldots,n.$ And this shows
\by\underset{||\theta_1- \tilde{\theta}_1|| \leq M, ||\theta_2||\leq C\sqrt{d_n}}{\sup}\Bigg|\frac{1}{n}\sumn\Big( \tilde{Q}_i(\theta_1,\tilde{\theta}_1,\theta_2) - \ep[\tilde{Q}_i(\theta_1,\tilde{\theta}_1,\theta_2)] \\+\psi_\alpha(\epsilon_i )\tilde{\x}_i^{'}(\theta_1-\tilde{\theta}_1) \Big) \Bigg| = o_p(1).\ey

Notice the definition of $\tilde{\theta}_1$ and Lemma \ref{Prop of Design}, it follows that
\by
\sumn \psi_\alpha(\epsilon_i )\tilde{\x}_i^{'}(\theta_1-\tilde{\theta}_1) &=& (\theta_1-\tilde{\theta}_1)^{'}n^{-1/2}X^{*'}\psi_\alpha(\epsilon)\\
&=& (\theta_1-\tilde{\theta}_1)^{'}W_n\tilde{\theta}_1(1+o_p(1)).
\ey
Then, by using the result
 \by
\underset{||\theta_1- \tilde{\theta}_1|| \leq M, ||\theta_2||\leq C\sqrt{d_n}}{\sup} \Big|\frac{1}{n}\sumn \ep[\tilde{Q}_i(\theta_1,\tilde{\theta}_1,\theta_2)] - \frac{1}{2n}\Big(\theta^{'}_1W_n\theta_1- \tilde{\theta}^{'}_1W_n\tilde{\theta}_1\Big)\Big| = o_p(1),
\ey
we have
\by
\underset{||\theta_1- \tilde{\theta}_1|| \leq M, ||\theta_2||\leq C\sqrt{d_n}}{\sup} \Big|\frac{1}{n}\sumn \tilde{Q}_i(\theta_1,\tilde{\theta}_1,\theta_2) - \frac{1}{2n}\Big(\theta^{'}_1W_n\theta_1- \tilde{\theta}^{'}_1W_n\tilde{\theta}_1\Big)\\+ \frac{1}{n}(\theta_1-\tilde{\theta}_1)^{'}W_n\tilde{\theta}_1\Big|  = o_p(1),\\
\ey
which means
\by
\underset{||\theta_1- \tilde{\theta}_1|| \leq M, ||\theta_2||\leq C\sqrt{d_n}}{\sup} \Big|\frac{1}{n}\sumn \tilde{Q}_i(\theta_1,\tilde{\theta}_1,\theta_2) - \frac{1}{2n}(\theta_1-\tilde{\theta}_1)^{'}W_n(\theta_1-\tilde{\theta}_1)\Big|
= o_p(1).
\ey
Thus from Condition \ref{C(2)}, for any $\theta_1$ satisfying $||\theta_1- \tilde{\theta}_1|| \geq M>0$, we have
\by
 \frac{1}{2n}(\theta_1-\tilde{\theta}_1)^{'}W_n(\theta_1-\tilde{\theta}_1)> 0,
\ey
which implies (\ref{Euq6.9}) holds. Thus, the result follows.\qed
\end{proof}
\proofthm3

\bn[(1)]
\item For the first part (11), from the results of Lemma \ref{ShL5} and Lemma \ref{ShL6}, we can show $||\hat{\c}^*_A - \bbeta^*_A|| = O_p(\sqrt{q_n/n})$. Hence through the fact that $\hat{c}^*_A = \hat{\bbeta}^*_A $, our first result can be proved.
\item For the second part (3.4), we have  shown that $\hat{g}(\z_i) = \tilde{g}(\z_i)$, thus by Lemma \ref{ShL4}, the second result holds.\qed
\en

\begin{lemma}\label{Sufficient condition for local optimal minimizer}
	Suppose the objective function $L(\btheta)$ can be decomposed as the difference of two convex functions $k(\btheta)$ and $l(\btheta)$, i.e.,
	\bqs L(\btheta)=k(\btheta)-l(\btheta),\eqs
	with the corresponding subdifferential functions $\partial k(\btheta)$ and $\partial l(\btheta)$ respectively. Let $\text{dom}(k)=\{\btheta:k(\btheta)<\infty\}$ be the effective domain of $k(\btheta)$ and $\btheta^*$ be a point that has neighbourhood $U$ such that $\partial l(\btheta)\cap \partial k(\btheta^*)\neq \emptyset$,  $\forall~ \btheta \in U\cap \text{dom}(k)$. Then $\btheta^*$ is  a local minimizer of $f(\btheta)$.
\end{lemma}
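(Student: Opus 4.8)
The plan is to prove the statement by a direct application of the two defining subgradient inequalities of convex functions, with no limiting or compactness argument needed beyond the given neighbourhood $U$. Since $L=k-l$ and $f$ in the statement is $L$, it suffices to show that $L(\btheta)\geq L(\btheta^*)$ for every $\btheta\in U$, which is precisely the assertion that $\btheta^*$ is a local minimizer. First I would fix an arbitrary $\btheta\in U\cap\text{dom}(k)$ and invoke the hypothesis to select a common element $v\in\partial l(\btheta)\cap\partial k(\btheta^*)$. The crux of the argument is the asymmetry in the base points: the same vector $v$ serves as a subgradient of $k$ at the \emph{fixed} point $\btheta^*$ and, simultaneously, as a subgradient of $l$ at the \emph{moving} point $\btheta$. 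Assuming as usual that $k$ and $l$ are proper convex functions with $\btheta^*\in\text{dom}(k)\cap\text{dom}(l)$, all quantities below are finite.

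Next I would write the two convexity inequalities and add them. Because $v\in\partial k(\btheta^*)$, the subgradient inequality for $k$ based at $\btheta^*$ gives $k(\btheta)\geq k(\btheta^*)+v'(\btheta-\btheta^*)$. Because $v\in\partial l(\btheta)$, the subgradient inequality for $l$ based at $\btheta$ and tested at $\btheta^*$ gives $l(\btheta^*)\geq l(\btheta)+v'(\btheta^*-\btheta)$, which I would rearrange into $-l(\btheta)\geq -l(\btheta^*)-v'(\btheta-\btheta^*)$. Adding these two inequalities makes the linear terms $v'(\btheta-\btheta^*)$ and $-v'(\btheta-\btheta^*)$ cancel exactly, leaving $k(\btheta)-l(\btheta)\geq k(\btheta^*)-l(\btheta^*)$, i.e. $L(\btheta)\geq L(\btheta^*)$ for this $\btheta$. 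Since $\btheta$ was an arbitrary point of $U\cap\text{dom}(k)$, the inequality holds on all of $U\cap\text{dom}(k)$.

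Finally I would dispose of the remaining points $\btheta\in U\setminus\text{dom}(k)$: there $k(\btheta)=+\infty$ while $l(\btheta)$ is finite (as $l$ is proper convex), so $L(\btheta)=+\infty\geq L(\btheta^*)$ trivially. Combining the two cases yields $L(\btheta)\geq L(\btheta^*)$ throughout $U$, which establishes that $\btheta^*$ is a local minimizer of $L$. There is no substantive obstacle here: the entire content of the proof is the cancellation of the linear terms, which is engineered precisely by the common-subgradient condition $\partial l(\btheta)\cap\partial k(\btheta^*)\neq\emptyset$. The only point requiring care is bookkeeping—keeping straight that $k$ is differenced at the base point $\btheta^*$ (inequality applied forward, at the variable point) while $l$ is differenced at the variable point $\btheta$ (inequality applied backward, at $\btheta^*$), since swapping these would fail to cancel the linear terms.
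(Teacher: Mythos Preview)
Your argument is correct and is in fact the standard proof of this DC local-optimality criterion: pick a common subgradient, write the two subgradient inequalities with the base points swapped, and add so that the linear terms cancel. The paper itself does not give a proof at all---it simply cites \cite{TaA97}---so your self-contained derivation supplies strictly more than the paper does, and is essentially the argument one finds in that reference.

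One small imprecision worth tightening: the sentence ``$l(\btheta)$ is finite (as $l$ is proper convex)'' is not literally true in general, since a proper convex function may take the value $+\infty$ off its effective domain. In the present paper this is harmless because both $k$ and $l$ are finite everywhere (indeed $\text{dom}(k)=\R^{p+D_n}$, so the case $\btheta\notin\text{dom}(k)$ never arises), but if you want the abstract lemma to stand on its own you should either assume $\text{dom}(l)\supseteq\text{dom}(k)$, or simply note that $L(\btheta)=k(\btheta)-l(\btheta)$ is only defined on $\text{dom}(k)\cap\text{dom}(l)$ and restrict attention there.
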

\proof The proof is available in \cite{TaA97}.\qed
\begin{lemma}\label{momentsbounding}
Denote $\{X_i\}^n_{i=1}$ a sequence of independent real valued random variables with $\ep X_i = 0$ and $S_n = \sum_{i=1}^n X_i$. Then for $r \geq 2$, the following inequality holds:
\by
\ep |S_n|^r \leq C_rn^{r/2-1}\sum_{i=1}^{n}\ep |X_i|^r,
\ey
where $C_r$ is some constant only depending on r.
\end{lemma}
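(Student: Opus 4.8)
The plan is to recognize the claimed bound as a crude form of a Rosenthal / Marcinkiewicz--Zygmund moment inequality, in which the two separate terms of the sharp bound are collapsed into the single sum $\sum_{i=1}^n \ep|X_i|^r$ at the cost of the factor $n^{r/2-1}$. That factor is purely a convexity artifact and should be isolated first, so that the remaining probabilistic content reduces to a standard square-function estimate that genuinely uses independence and the mean-zero hypothesis (the latter being what forces the $n^{r/2}$ scaling rather than $n^r$).

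First I would record the elementary convexity inequalities that produce the factor $n^{r/2-1}$. Since $r\geq 2$, the map $t\mapsto t^{r/2}$ is convex on $[0,\infty)$, so Jensen's inequality applied to the uniform average of $X_1^2,\dots,X_n^2$ gives, pointwise,
\by
\Big(\sum_{i=1}^n X_i^2\Big)^{r/2}= n^{r/2}\Big(\frac1n\sum_{i=1}^n X_i^2\Big)^{r/2}\leq n^{r/2-1}\sum_{i=1}^n |X_i|^r.
\ey
Taking expectations yields $\ep\big(\sum_i X_i^2\big)^{r/2}\leq n^{r/2-1}\sum_i \ep|X_i|^r$. The same convexity, together with Lyapunov's inequality $(\ep X_i^2)^{r/2}\leq \ep|X_i|^r$, gives the companion bound $\big(\sum_i \ep X_i^2\big)^{r/2}\leq n^{r/2-1}\sum_i\ep|X_i|^r$ for the variance term.

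Second, I would supply the scaling estimate. The cleanest route is the Marcinkiewicz--Zygmund inequality: because $\ep X_i=0$, the $X_i$ form a martingale-difference sequence, so there is a constant $B_r$ with $\ep|S_n|^r\leq B_r\,\ep\big(\sum_{i=1}^n X_i^2\big)^{r/2}$. Combining this with the first display gives $\ep|S_n|^r\leq B_r\,n^{r/2-1}\sum_i\ep|X_i|^r$, which is the assertion with $C_r=B_r$. Equivalently, one may invoke Rosenthal's inequality $\ep|S_n|^r\leq C_r'\big[(\sum_i\ep X_i^2)^{r/2}+\sum_i\ep|X_i|^r\big]$ and collapse both terms using the two convexity bounds above, noting $n^{r/2-1}\geq1$ for $r\geq2$.

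The main obstacle is this Marcinkiewicz--Zygmund / Rosenthal step, i.e. obtaining the correct $n^{r/2}$ scaling; everything else is convexity. If a self-contained argument is preferred, I would derive it by symmetrization: let $X_i'$ be an independent copy of $X_i$ and $\tilde X_i=X_i-X_i'$. Using $\ep X_i'=0$ and conditional Jensen, $\ep|S_n|^r\leq \ep|\sum_i \tilde X_i|^r$; since each $\tilde X_i$ is symmetric, $(\tilde X_i)\stackrel{d}{=}(\epsilon_i\tilde X_i)$ for i.i.d.\ Rademacher signs $\epsilon_i$ independent of the $\tilde X_i$. Conditioning on the $\tilde X_i$ and applying Khintchine's inequality to the Rademacher sum bounds $\ep_\epsilon|\sum_i \epsilon_i\tilde X_i|^r$ by $B_r'(\sum_i \tilde X_i^2)^{r/2}$; taking expectations and using $\ep|\tilde X_i|^r\leq 2^{r-1}(\ep|X_i|^r+\ep|X_i'|^r)=2^r\ep|X_i|^r$ reproduces the square-function bound, after which the convexity step closes the argument. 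The only quantitative input is the Khintchine constant, whose dependence on $r$ determines $C_r$.
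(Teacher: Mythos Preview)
Your argument is correct. The convexity step isolating $n^{r/2-1}$ is clean, and both routes you offer for the square-function estimate --- the Marcinkiewicz--Zygmund inequality directly, or the symmetrization/Khintchine derivation --- are standard and sound.

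The paper, however, does not prove this lemma at all: it simply cites Chung (2008) for the details and moves on. So there is no methodological comparison to make; your proposal is strictly more informative than what the paper provides. If you want to align with the paper, a one-line citation suffices; if you want a self-contained appendix, your Marcinkiewicz--Zygmund-plus-Jensen route is the efficient choice.
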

\proof{Details can be found in \cite{Chung08}}.\qed

\begin{lemma}\label{Subgradient of oracle estimator}
	Assume Conditions 3.1-3.5p are satisfied. The tuning parameter $\lambda = o\left(n^{-(1-  C_4)/2}\right)$, $q_n = o(n\lambda^2)$, $k_n = o(n\lambda^2)$ and $p = o\big((n\lambda^2)^k\big)$. For the oracle estimator $(\hat{\bbeta}^*,\hat{\bxi}^*)$, with probability tending to one,
	\be
	s_j(\hat{\bbeta}^*,\hat{\bxi}^*)&=&0,~~j=1,\ldots,q_n ~\text{or}~ j=p+1,\ldots,p+D_n,\\
	|\hat{\beta}^*_j|&\geq& (a+1/2)\lambda,~~j=1,\ldots,q_n,\\
	|s_j(\hat{\bbeta}^*,\hat{\bxi}^*)|&\leq&\lambda,~~j=q_n+1,\ldots,p.
	\ee
\end{lemma}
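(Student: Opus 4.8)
The statement bundles three facts about the oracle estimator $(\hat{\bbeta}^*,\hat{\bxi}^*)$, and my plan is to prove them in increasing order of difficulty, drawing on the two rates $\|\hat{\bbeta}^*_A-\bbeta^*_A\|=O_p(\sqrt{q_n/n})$ and $n^{-1}\sumn(\hat g(\z_i)-g_0(\z_i))^2=O_p((q_n+k_n)/n)$ supplied by Theorem \ref{oracle estimator asymptotic property}. The first identity, $s_j(\hat{\bbeta}^*,\hat{\bxi}^*)=0$ for $j\le q_n$ and for $j=p+1,\dots,p+D_n$, is just the stationarity condition of the oracle problem (\ref{Oracle Estimator Representation}): by Lemma \ref{Prop of Loss}(1) the objective $(\bbeta,\bxi)\mapsto n^{-1}\sumn\phi_{\alpha}(y_i-\x_{A_i}'\bbeta-\bPi(\z_i)'\bxi)$ is convex and everywhere differentiable, the oracle estimator is an unconstrained minimizer over $\R^{q_n}\times\R^{D_n}$, so its gradient vanishes there, and the zero padding of $\hat{\bbeta}^*$ does not enter the $\beta$- or $\xi$-derivatives.

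For the signal-strength bound $|\hat{\beta}^*_j|\ge(a+1/2)\lambda$ on the active set I would combine $\max_{j\le q_n}|\hat{\beta}^*_j-\beta^*_j|\le\|\hat{\bbeta}^*_A-\bbeta^*_A\|=O_p(\sqrt{q_n/n})$ with the Beta-min Condition \ref{C(5)}. Since $q_n=O(n^{C_3})$ with $C_3<C_4$ we have $\sqrt{q_n/n}=o(n^{-(1-C_4)/2})$, and since $\lambda=o(n^{-(1-C_4)/2})$ also $(a+1/2)\lambda=o(n^{-(1-C_4)/2})$; hence
\bqs
\min_{j\le q_n}|\hat{\beta}^*_j|\ \ge\ \min_{j\le q_n}|\beta^*_j|-\max_{j\le q_n}|\hat{\beta}^*_j-\beta^*_j|\ \ge\ C_5 n^{-(1-C_4)/2}-o_p\!\left(n^{-(1-C_4)/2}\right),
\eqs
which dominates $(a+1/2)\lambda$ with probability tending to one. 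This is precisely the inequality that makes $H'_{\lambda}(\hat{\beta}^*_j)=\lambda\,\mathrm{sgn}(\hat{\beta}^*_j)$ on the active set, which is what the oracle-property theorem will exploit.

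The third claim, $\max_{q_n<j\le p}|s_j(\hat{\bbeta}^*,\hat{\bxi}^*)|\le\lambda$ with probability tending to one, is the heart of the lemma. Since the $j$th coordinate of $\hat{\bbeta}^*$ is zero, $s_j(\hat{\bbeta}^*,\hat{\bxi}^*)=-n^{-1}\sumn\psi_{\alpha}(\hat r_i)x_{ij}$, where by model (\ref{high-dimensional partially linear model}) and the identity $\hat g(\z_i)=\bPi(\z_i)'\hat{\bxi}^*$ the oracle residual is $\hat r_i=\epsilon_i-\x_{A_i}'(\hat{\bbeta}^*_A-\bbeta^*_A)-(\hat g(\z_i)-g_0(\z_i))$. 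I would split $\psi_{\alpha}(\hat r_i)=\psi_{\alpha}(\epsilon_i)+\bigl(\psi_{\alpha}(\hat r_i)-\psi_{\alpha}(\epsilon_i)\bigr)$, i.e.\ $s_j=\mathrm{I}_j+\mathrm{II}_j$. For the noise part $\mathrm{I}_j=-n^{-1}\sumn\psi_{\alpha}(\epsilon_i)x_{ij}$: conditionally on the covariates the summands are independent and mean zero, because $m_{\alpha}(\epsilon_i|\x_i,\z_i)=0$ forces $\ep[\psi_{\alpha}(\epsilon_i)|\x_i,\z_i]=0$; moreover $|\psi_{\alpha}(\epsilon_i)x_{ij}|\le 2c_2M_1|\epsilon_i|$ and, since $|\psi_{\alpha}(r)|\le 2c_2|r|$, the $2k$th conditional moment is bounded via Condition \ref{C(1)}. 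Lemma \ref{momentsbounding} then gives $\ep|\sumn\psi_{\alpha}(\epsilon_i)x_{ij}|^{2k}\le Cn^{k}$ uniformly in $j$, so by Markov's inequality and a union bound over the at most $p$ inactive indices $\P\bigl(\max_{q_n<j\le p}|\mathrm{I}_j|>\lambda/2\bigr)\le Cp/(n\lambda^2)^{k}\to0$ under $p=o((n\lambda^2)^k)$. For the bias part, the Lipschitz bound of Lemma \ref{Prop of Loss}(2), $|x_{ij}|\le M_1$, and Cauchy--Schwarz give
\bqs
\max_{q_n<j\le p}|\mathrm{II}_j|\ \le\ 2c_2M_1\Bigl(\tfrac1n\sumn(\x_{A_i}'(\hat{\bbeta}^*_A-\bbeta^*_A))^2\Bigr)^{1/2}+2c_2M_1\Bigl(\tfrac1n\sumn(\hat g(\z_i)-g_0(\z_i))^2\Bigr)^{1/2},
\eqs
and the first term is $\le 2c_2M_1\sqrt{\lambda_{\max}(n^{-1}X_AX_A')}\,\|\hat{\bbeta}^*_A-\bbeta^*_A\|=O_p(\sqrt{q_n/n})$ by Condition \ref{C(2)} (using that $n^{-1}X_A'X_A$ and $n^{-1}X_AX_A'$ share nonzero eigenvalues), while the second is $O_p(\sqrt{(q_n+k_n)/n})$ by Theorem \ref{oracle estimator asymptotic property}; both are $o_p(\lambda)$ because $q_n=o(n\lambda^2)$ and $k_n=o(n\lambda^2)$. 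Adding the two contributions gives the claim.

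The main obstacle is the uniform control of $\mathrm{I}_j$ over the entire inactive set. With only finitely many moments $\ep[\epsilon_i^{2k}|\x_i,\z_i]<C$, an exponential tail bound like Lemma \ref{Bernstein inequality} is unavailable, so a polynomial-moment bound via Lemma \ref{momentsbounding} together with a union bound over $p$ coordinates is essentially forced, and balancing $p$ against $(n\lambda^2)^k$ is exactly what produces the dimension restriction $p=o((n\lambda^2)^k)$; under sub-Gaussian errors one could instead afford $\log p=o(n\lambda^2)$. A technical point I would take care with is the order of operations for $\mathrm{I}_j$: first condition on $\{(\x_i,\z_i)\}_{i=1}^n$ so the summands are genuinely independent and mean zero, note that the moment bound from Lemma \ref{momentsbounding} is uniform over the covariates, apply Markov and the union bound, and only afterwards remove the conditioning.
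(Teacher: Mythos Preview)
Your proposal is correct and follows essentially the same route as the paper. The only cosmetic difference is in the third claim: where the paper decomposes $s_j(\hat{\bbeta}^*,\hat{\bxi}^*)$ into three pieces $I_{j1}+I_{j21}+I_{j22}$ (separating the change in the residual from the change in the indicator), you collapse $I_{j1}$ and $I_{j21}$ into a single bias term $\mathrm{II}_j$ via the Lipschitz bound on $\psi_\alpha$ from Lemma~\ref{Prop of Loss}(2), which is slightly cleaner; the noise term $\mathrm{I}_j$ is handled identically to the paper's $I_{j22}$ through Lemma~\ref{momentsbounding}, Markov's inequality and a union bound over $p$ coordinates.
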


\begin{proof}
\bn[(1)]
\item Proof for (6.5).  For $j=1,\ldots,q_n ~\text{or}~ j=p+1,\ldots,p+D_n$, by the first order necessary condition of the optimal solution,
\by
s_{j}(\hat{\bbeta}^*,\hat{\bxi}^*) = \frac{\partial}{\partial\beta_j}\Big(\frac{1}{n}\sum_{i=1}^{n}\phi_{\alpha}(y_i-x_i^{'}\bbeta-\bPi(z_i)^{'}\bxi)\Big)\mid_{(\bbeta,\bxi)=(\hat{\bbeta}^*,\hat{\bxi}^*)} = 0.
\ey

\item Proof for Inequality (6.6). It's sufficient to prove that
\by
\P\left(\underset{1 \leq j \leq q_n}{\min}|\hat{\beta}^*_j| \geq (a+1/2)\lambda)\right)\rightarrow1, \text{as}~~ n \rightarrow \infty.
\ey
Notice that
\by\underset{1 \leq j \leq q_n}{\min}|\hat{\beta}^*_j|  \geq \underset{1 \leq j \leq q_n}{\min}|\beta_j^*| - \underset{1 \leq j \leq q_n}{\max}|\beta_j^* - \hat{\beta}^*_j|.\ey
By Theorem 3.1 and Condition \ref{C(4)}, $\|\hat{\bbeta}_{A}^* - {\bbeta}_A^*\| =O_p(\sqrt{\frac{q_n}{n}}) =O_p(n^{-(1-C_3)/2})$, then,
\by
\underset{1 \leq j \leq q_n}{\max}|\beta_j^* - \hat{\beta}^*_j| =  O_p(n^{-(1-C_3)/2}) = o_p(n^{-(1-C_4)/2}).
\ey
Also, Condition \ref{C(5)} shows $n^{(1-C_4)/2}\underset{1\leq j \leq q_n}{\min}|\bbeta_{j}^*| \geq C_5$. Thus, inequality (24) can hold by setting $\lambda = o(n^{-(1-C_4)/2})$.

\item Proof for Inequality (6.7). For $j = q_n+1,\ldots,p$, recall the definition of $s_j(\hat{\bbeta}^*,\hat{\bxi}^*)$ as
\by
s_{j}(\hat{\bbeta}^*,\hat{\bxi}^*)&=& \frac{\partial}{\partial\beta_j}\Big(\frac{1}{n}\sum_{i=1}^{n}\phi_{\alpha}(y_i-x_i^{'}\bbeta-\bPi(z_i)^{'}\bxi\Big)\mid_{(\bbeta,\bxi)=(\hat{\bbeta}^*,\hat{\bxi}^*)}\\
                                                                    &=& -\frac{2}{n}\sum_{i =1}^{n}x_{ij}(y_i-x^{'}_i\hat{\bbeta}^*-\bPi(z_i)^{'}\hat{\bxi}^*)|\I(y_i-x^{'}_i\hat{\bbeta}^*-\bPi(z_i)^{'}\hat{\bxi}^* < 0)-\alpha|\\
                                                                    &=& -\frac{2}{n}\sum_{i =1}^{n}x_{ij}(y_i-x^{'}_{Ai}\hat{\bbeta}_{A}^*-\bPi(z_i)^{'}\hat{\bxi}^*)|\I(y_i-x^{'}_{Ai}\hat{\bbeta}_{A}^*-\bPi(z_i)^{'}\hat{\bxi}^* < 0)-\alpha|,
\ey
where the last equality follows by the definition of oracle estimator $(\obeta,\oxi)$.

To prove the result, first we need to show that
\by
\P\left(\underset{q_{n}+1\leq j \leq p}{\max}{\Big|}\frac{2}{n}\sum_{i =1}^{n}x_{ij}(y_i-x^{'}_i\hat{\bbeta}^*-\bPi(z_i)^{'}\hat{\bxi}^*)|\I(y_i-x^{'}_i\hat{\bbeta}^*-\bPi(z_i)^{'}\hat{\bxi}^* < 0)-\alpha|{\Big|}>\lambda\right)\rightarrow 0,
\ey
which is equivalent to show that
\by
\P\left(\underset{q_{n+1}\leq j \leq p}{\max}{\Big|}\frac{2}{n}\sum_{i =1}^{n}x_{ij}(y_i-x^{'}_i\hat{\bbeta}^*-\bW(z_i)^{'}\hat{\bgamma})|\I(y_i-x^{'}_i\hat{\bbeta}^*-\bW(z_i)^{'}\hat{\bgamma} < 0)-\alpha|{\Big|}>\lambda\right)\rightarrow 0.
\ey
Set \by
\begin{split}
&\epsilon_i(\bbeta_A,\bgamma) = y_i - x_{Ai}^{'}\bbeta_{A}-\bW(z_i)^{'}\bgamma,\\
& \hat{\epsilon}_i = \epsilon_i(\hat{\bbeta}_A^*,\hat{\bgamma}) =  y_i - x_{Ai}^{'}\hat{\bbeta}^*_A-\bW(z_i)^{'}\hat{\bgamma} = y_i - x_{Ai}^{'}\hat{\bbeta}^*_A-\hat{g}(z_i) ,\\
&\epsilon^*_i = y_i - x_{Ai}^{'}\bbeta^*_{A}-g_0(z_i).
\end{split}
\ey
We also set \by
I_j =\frac{2}{n}\sum_{i =1}^{n}x_{ij}\hat{\epsilon}_i|\I(\hat{\epsilon}_i\leq0)-\alpha|
=I_{j1}+I_{j2} ,
\ey
with $I_{j1}=\frac{2}{n}\sum_{i =1}^{n}x_{ij}(\hat{\epsilon}_i - \epsilon^*_i)|\I(\hat{\epsilon}_i\leq0)-\alpha|$, and $I_{j2}=\frac{2}{n}\sum_{i =1}^{n}x_{ij}\epsilon^*_i|\I(\hat{\epsilon}_i\leq0)-\alpha|$.

Let's first consider $\P\left(\underset{q_{n+1}\leq j \leq p}{\max}|I_{j1}|>\lambda/2\right)\rightarrow0$. It follows from the proof of Lemma \ref{ShL4} that$ \|\bgamma_0-\hat{\bgamma}\|_2=O_p(\sqrt{\frac{k_nd_n}{n}})$.
Note that
\by
\frac{1}{n}\sum_{i=1}^{n}|\hat{\epsilon}_i - \epsilon^*_i| &=& \frac{1}{n}\sumn|x_{Ai}^{'}(\hat{\bbeta}^*_A-\bbeta_A^*)+ \bW(z_i)^{'}(\hat{\bgamma}-\bgamma_0)+u_{ni}|\\
&\leq&\frac{1}{n}\sum_{i=1}^{n}(|x_{Ai}^{'}(\hat{\bbeta}^*_A-\bbeta_A^*)| + |\bW(z_i)^{'}(\hat{\bgamma}-\bgamma_0)|+|u_{ni}|)\\
&\leq&(\frac{1}{n}\sum_{i=1}^{n}{|x_{Ai}^{'}(\hat{\bbeta}^*_A-\bbeta_A^*)|}^2)^{1/2} + (\frac{1}{n}\sum_{i=1}^{n}{|\bW(z_i)^{'}(\hat{\bgamma}-\bgamma_0)| }^2)^{1/2} + \underset{1\leq i\leq n}{\sup}|u_{ni}|\\
&\leq& \lambda_{max}^{1/2}(\frac{1}{n}X_AX_A^{'})\|\bbeta_A^*-\hat{\bbeta}^*_A\|_2 + (\frac{1}{n}(\hat{\bgamma}-\bgamma_0)^{'}\bW^2(\hat{\bgamma}-\bgamma_0))^{1/2} + \underset{1\leq i\leq n}{\sup}|u_{ni}|\\
&=&O_p(\sqrt{q_n/n}+\sqrt{d_n/n}+k_n^{-r}).
\ey
where the second inequality follows Jensen inequality and the last inequality applies Condition 3.2.

Thus, by using Condition 3.2 again and Condition 3.3 and 3.4,
\by
\underset{q_{n+1}\leq j \leq p}{\max}|I_{j1}| &\leq&  \ 2 \cdot \underset{q_{n+1}\leq j \leq p}{\max}|x_{ij}|\cdot \frac{1}{n}\sum_{i=1}^{n}|\hat{\epsilon}_i - \epsilon^*_i|\\
 &\leq& C\cdot \frac{1}{n}\sum_{i=1}^{n}|\hat{\epsilon}_i - \epsilon^*_i|\\
&= &O_p(\sqrt{q_n/n}+\sqrt{d_n/n}+k_n^{-r}) =o_p(\lambda),
\ey
which means,
\by
\P\left(\underset{q_{n+1}\leq j \leq p}{\max}|I_{j1}|>\lambda/2\right)\rightarrow0.
\ey
As for $I_{j2}$,
\by
I_{j2} &\leq& \frac{2}{n}\sumn x_{ij}\epsilon_i^*|\I(\hat{\epsilon}_i\leq0)-\I(\epsilon^*_i\leq0)| + \frac{2}{n}\sumn x_{ij}\epsilon^*_i|\alpha-\I(\epsilon_i^*\leq0)| \\
  &=&I_{j21}+I_{j22}.
\ey
To evaluate $I_{j21}$, note that
\bqs |\I(\hat{\epsilon}_i\leq0)-\I(\epsilon^*_i\leq0)| = |\I(\epsilon^*_i\leq\epsilon^*_i-\hat{\epsilon}_i)-\I(\epsilon^*_i\leq0)|\leq \I (|\epsilon^*_i|\leq|\epsilon^*_i-\hat{\epsilon}_i|).\eqs
So, we have for $j=q_{n+1},\ldots,p$,
\by
\underset{q_{n+1}\leq j \leq p}{\max}|I_{j21}|  &\leq& \underset{q_{n+1}\leq j \leq p}{\max} 2\times\frac{1}{n}\sumn|x_{ij}|\times|\epsilon^*_i|\times\I (|\epsilon^*_i|\leq|\epsilon^*_i-\hat{\epsilon}_i|)\\
&\leq& \underset{q_{n+1}\leq j \leq p}{\max} C\times\frac{1}{n}\sumn|\epsilon^*_i-\hat{\epsilon}_i|\\
&= &O_p(\sqrt{q_n/n}+\sqrt{d_n/n}+k_n^{-r}) =o_p(\lambda).
\ey
Thus, we can show that $\P\left(\underset{q_{n+1}\leq j \leq p}{\max}|I_{j21}|>\lambda/4\right)\rightarrow0$.

Now Consider $I_{j22}$, we define $\eta_i = \epsilon^*_i|\I(\epsilon^*_i \leq 0) - \alpha|$, which is independent and satisfies $\ep \left(\eta_i|x_i \right) = 0$. By Condition \ref{C(1)} we have $\ep\left(\eta_i^{2k}|x_i\right) < \infty$. Also, $x_{ij}$is bounded because of  Condition \ref{C(2)}, thus by Lemma \ref{momentsbounding}, we have $\ep I^{2k}_{j22}= O(n^{-k})$. Therefore, by Markov Inequality, we have
\by
\P\left(|I_{j22}|>\lambda\right)\leq \frac{\ep I^{2k}_{j22}}{(\lambda^{2k})}=O((n\lambda^2)^{-k}),
\ey
which contains
\by
\P\left(\underset{q_n+1\leq j \leq p}{\max}|I_{j22}|>\lambda/4\right)\leq \sum^{p}_{q_n+1}\P\left(|I_{j22}|>\lambda/4\right)=O(p(n\lambda^2)^{-k}) \rightarrow 0.
\ey
Overall,
\by
&&\P\left(\underset{q_n+1\leq j \leq p}{\max}|I_{j}|>\lambda\right)\\
&\leq&\P\left(\underset{q_n+1\leq j \leq p}{\max}|I_{j1}|>\lambda/2\right) + \P\left(\underset{q_n+1\leq j \leq p}{\max}|I_{j2}|>\lambda/2\right)\\
 &\leq& \P\left(\underset{q_n+1\leq j \leq p}{\max}|I_{j1}|>\lambda/2\right) + \P\left(\underset{q_n+1\leq j \leq p}{\max}|I_{j22}|>\lambda/4\right) \\
 &&+~ \P\left(\underset{q_n+1\leq j \leq p}{\max}|I_{j21}|>\lambda/4\right) \rightarrow 0
\ey\qed
\en
\end{proof}
\noindent{\bf Proof of Theorem 3.2.}

 We take use of Lemma \ref{Sufficient condition for local optimal minimizer} to prove our theorem. Consider any $(\bbeta',\bxi')'$ in a ball $\mathcal{B}(\lambda)$ with the center $(\hat{\bbeta}^*,\hat{\bxi}^*)$ and radius $\lambda/2$. It is sufficient to show that for any $(\bbeta',\bxi')'\in \mathcal{B}(\lambda)$, with probability tending to one,
\by
\partial l(\bbeta,\bxi) \cap \partial k(\hat{\bbeta}^*,\hat{\bxi}^*)\neq \emptyset.
\ey

Define the event $\mathcal{E}_1=\{|\hat{\beta}_j^*|\geq(a+1/2)\lambda,~1\leq j\leq q_n\}$. Then by Lemma \ref{Subgradient of oracle estimator}, $\P(\mathcal{E}_1)\to 1$, as $n\to \infty$.
For $j=1,\ldots,q_n$, on the event $\mathcal{E}_1$, for any $(\bbeta',\bxi')'\in \mathcal{B}(\lambda)$,
\by
\underset{1\leq j\leq q_n}{\min}{|\beta_j|}\geq \underset{1\leq j\leq q_n}{\min}{|\hat{\beta}_j^*|}-\underset{1\leq j\leq q_n}{\max}{|\hat{\beta}_j^*-\beta_j|}\geq (a+1/2)\lambda-\lambda/2=a\lambda.
\ey
So $H_{\lambda}'(\beta_j)=\lambda\text{sgn}(\beta_j)$, i.e., $\mu_j=\frac{\partial{l(\bbeta,\bxi)}}{\partial \beta_j}=\lambda\text{sgn}(\beta_j)$.
At the same time, $\kappa_j=s_j(\hat{\bbeta}^*,\hat{\bxi}^*)+\lambda l_j,~~\text{for}~j=1,2,\ldots,q_n$. By convex optimization theory or Lemma \ref{Subgradient of oracle estimator}, $s_j(\hat{\bbeta}^*,\hat{\bxi}^*)=0$. Then on the event $\mathcal{E}_1$, if $\text{sgn}(\hat{\beta}_j^*)=\text{sgn}(\beta_j)$, we have that  $\kappa_j=\frac{\partial k(\hat{\bbeta}^*,\hat{\bxi}^*)}{\partial \beta_j}=\mu_j$. In fact, if $\text{sgn}(\hat{\beta}_j^*)\neq \text{sgn}(\beta_j)$, then on the event $\mathcal{E}_1$, $|\hat{\beta}_j^*-\beta_j|=|\hat{\beta_j}^*|+|\beta_j|\geq (a+1/2)\lambda$, which causes contradition with that $(\bbeta',\bxi')'\in \mathcal{B}(\lambda)$.

Define the event $\mathcal{E}_2=\{|s_j(\hat{\bbeta}^*,\hat{\bxi}^*)|\leq \lambda,~q_n+1\leq j\leq p\}$. For $j=q_n+1,\ldots,p$, by the construction of the oracle estimator, we have $\hat{\beta}_j^*=0$. Then for any $(\bbeta',\bxi')'\in \mathcal{B}(\lambda)$,
\by
\underset{q_n+1\leq j\leq p}{\max}{|\beta_j|}\leq \underset{q_n+1\leq j\leq p}{\max}{|\hat{\beta}_j^*|}+\underset{q_n+1\leq j\leq p}{\max}{|\hat{\beta}_j^*-\beta_j|}\leq \lambda/2.
\ey
So in this situation, $\mu_j=\frac{\partial{l(\bbeta,\bxi)}}{\partial \beta_j}=0$.
On the other hand, $\kappa_j=s_j(\hat{\bbeta}^*,\hat{\bxi}^*)+\lambda l_j$ with $l_j\in[-1,1]$. On the event $\mathcal{E}_2=\{|s_j(\hat{\bbeta}^*,\hat{\bxi}^*)|\leq \lambda,~q_n+1\leq j\leq p\}$, there exists $l_j^*,~j=q_n+1,\ldots,p$ such that $
s_j(\hat{\bbeta}^*,\hat{\bxi}^*)+\lambda l_j^*=0=\mu_j$.

For $j=p+l,~l=1,\ldots,D_n$,  by convex optimization theory or Lemma \ref{Subgradient of oracle estimator}, $\kappa_j=s_j(\hat{\bbeta}^*,\hat{\bxi}^*)=0$. Note that
$\mu_j= \frac{\partial l(\bbeta,\bxi)}{\partial \xi_l}=0$. So for $j=p+l,~l=1,\ldots,D_n$, $\kappa_j=\mu_j$.

Combined with all results above, on the event $\mathcal{E}_1\cap \mathcal{E}_2$, we have for any $(\bbeta',\bxi')'\in \mathcal{B}(\lambda)$,
\by
\partial l(\bbeta,\bxi) \cap \partial k(\hat{\bbeta}^*,\hat{\bxi}^*)\neq \emptyset.
\ey
Note that
\by
\P(\mathcal{E}_1\cap \mathcal{E}_2)\geq 1-\P(\bar{\mathcal{E}_1})-\P(\bar{\mathcal{E}_2})\to 1,~\text{as}~n\to\infty.
\ey
So far, the proof has been completed. \qed


\section*{References}

\end{document}